\newtheorem{theorem}{Theorem}
\newtheorem{observation}{Observation}
\theoremstyle{plain}
\newtheorem{claim}{Claim}
\newtheorem{conjecture}{Conjecture}
\newtheorem{corollary}{Corollary}
\newtheorem{example}{Example}
\newtheorem{lemma}{Lemma}
\newtheorem{problem}{Problem}
\newtheorem{proposition}{Proposition}
\newtheorem{question}{Question}
\newtheorem{remark}{Remark}
\numberwithin{equation}{section}
\DeclareMathOperator{\supp}{supp}
\begin{document}
	\title{Shuffle squares and ordered nest-free graphs}\thanks{The manuscript was originally entitled \emph{Graphic images of shuffle squares}}

	\author[J. Grytczuk]{Jaros\l aw Grytczuk}
	\address{Faculty of Mathematics and Information Science, Warsaw University
		of Technology, 00-662 Warsaw, Poland}
	\thanks{The first author was supported by Narodowe Centrum Nauki, grant 2020/37/B/ST1/03298.}
	\email{jaroslaw.grytczuk@pw.edu.pl}
	
	\author[B. Pawlik]{Bart\l omiej Pawlik}
	\address{Institute of Mathematics, Silesian University of Technology, 44-100 Gliwice, Poland}
	\email{bpawlik@polsl.pl}

	\author[A. Ruci\'nski]{Andrzej Ruci\'nski}
	\address{Faculty of Mathematics and Computer Science, Adam Mickiewicz University, 61-614 Pozna\'n, Poland}
	\thanks{The third author was supported by Narodowe Centrum Nauki, grant 2024/53/B/ST1/00164.}
	\email{rucinski@amu.edu.pl}

	\maketitle

\begin{abstract}
	 A \emph{shuffle square} is a word consisting of two \emph{shuffled} copies of the same word. For instance, the Turkish word $\mathtt{\color{red}{ik}\color{blue}{i}\color{red}{li}\color{blue}{kli}}$ (\emph{binary} in English) is a shuffle square, as it can be split into two copies of the word $\mathtt{ikli}$.

We explore a representation of shuffle squares in terms of \emph{ordered nest-free graphs} and demonstrate the usefulness of this approach by applying it to several families of binary words. Among others, we characterize shuffle squares with four and five runs, as well as shuffle squares with all $\mathtt1$-runs of length one (and with the $\mathtt1$'s alternating between the two copies).

In our main result we provide quite general sufficient conditions for a binary word \emph{not} to be a shuffle square.
 In particular, it follows that binary words of the type $(\mathtt{1001})^n$, $n$ odd, are not shuffle squares. We complement it by showing that all other words whose every $\mathtt{1}$-run has length one or two, while every $\mathtt{0}$-run has length two, are shuffle squares.

 We also provide a counterexample to a believable stipulation that binary words of the form $\mathtt1^{m}\mathtt0^{m-2}\mathtt1^{m-4}\cdots$, $m$ odd, are far from being shuffle squares (the distance measured by the minimum number of letters one has to delete in order to turn a word into a shuffle square).

\keywords{Keywords: combinatorics on words, shuffle squares, ordered graphs}
\end{abstract}
	
\section{Introduction}
	
\subsection{Definitions}

Let $k\geqslant1$ be an integer and let $A$ be an alphabet with $|A|=k$. By a~\emph{$k$-ary word} of \emph{length $n$} we mean any sequence $W=w_1\cdots w_n$ with $w_i\in A$ for all $1\leqslant i\leqslant n$. For \emph{binary} words we typically take the~alphabet $\{\mathtt0,\mathtt1\}$ or $\{\mathtt{A},\mathtt{B}\}$, while for bigger alphabets we rather use the set $[k]:=\{\mathtt1,\mathtt2,\dots,\mathtt{k}\}$. The length of a~word $W$ will be denoted by $|W|$.

Any subsequence $W'=w_{i_1}\cdots w_{i_t}$ of a word $W=w_1\cdots w_n$ is called a~\emph{subword} of $W$. Its set of indices is denoted by $\supp(W')=\{i_1,\dots,i_t\}$ and called the~\emph{support} of $W'$. If the~support of a~subword $W'$ consists of consecutive integers, then we call $W'$ a \emph{block} of $W$.

A pair of subwords $X,Y$ of $W$ with $X=Y$ and disjoint supports is called \emph{twins}. We will be using the shorthand $$\supp(X,Y)=\supp(X)\cup \supp(Y)$$ wherever convenient.
The~length of twins is defined as the length of either one of them, $|X|=|Y|$. We will occasionally  refer also to the \emph{double length} of twins which equals twice the length and measures how much of the word is taken by the twins. The letters of $W$ which do not belong to either of the twins are called \emph{gaps} (with respect to $X,Y$).

\begin{example}\label{small}\rm Let $$W=\mathtt{111001000110}$$
be a binary word of length $12$. One pair of twins $X,Y$ of length 5 (so of double length $10$) in $W$ is given by $\supp(X)=\{1,6,8,9,12\}$ and $\supp(Y)=\{2,3,4,5,7\}$, as indicated by colors and under/overlines: $$W=\underline{\textcolor{red}{\mathtt{1}}}\;\overline{\textcolor{blue}{\mathtt{1}}}\overline{\textcolor{blue}{\mathtt{1}}}\overline{\textcolor{blue}{\mathtt{0}}}
\overline{\textcolor{blue}{\mathtt{0}}} \; \underline{\textcolor{red}{\mathtt{1}}} \;
\overline{\textcolor{blue}{\mathtt{0}}} \; \underline{\textcolor{red}{\mathtt{0}}}\underline{\textcolor{red}{\mathtt{0}}} \;
\mathtt{1}\mathtt{1} \; \underline{\textcolor{red}{\mathtt{0}}}.$$
Indeed, we have $X=Y=\mathtt{11000}$, while $w_{10}$ and $w_{11}$ are gaps here.
\end{example}

Given two words $U={u}_1\cdots {u}_n$ and $W={w}_1\cdots {w}_m$, their \emph{concatenation} is defined as the~word $UW={u}_1\cdots {u}_n {w}_1\cdots {w}_m$. This definition extends naturally to any finite number of words. In particular, $W^r$ denotes the concatenation of $r$ copies of the word $W$. For $w\in A$, we may write ${w}^r$ instead of ${ww}\cdots{w}$. For instance,
 $$W=\mathtt{111001000110} = \mathtt1^3\mathtt0^2\mathtt{10}^3\mathtt1^2\mathtt0.$$ This notation comes in handy when there are long blocks of identical letters.

  For any letter $w\in A$, a {\it $w$-run} in a word $W$ is a maximal block $F$ of $W$ of the form $F={w}^r$ for some $r\geqslant1$. Here ``maximal'' means that no other block $F'$ of $W$ which contains $F$ is of the form $F'=w^{r'}$, with $r'>r$. By a \emph{run} we mean a~$w$-run for any $w\in A$. Every word can be uniquely decomposed into a concatenation of its runs. For instance, in $\mathtt{011011100}$ there are two $\mathtt1$-runs ($\mathtt{11},\mathtt{111}$) and three $\mathtt0$-runs ($\mathtt{0},\mathtt{0},\mathtt{00}$), and the corresponding decomposition is $$\mathtt{0\;11\;0\;111\;00}=\mathtt{0}\mathtt{1}^2\mathtt{0}\mathtt{1}^3\mathtt{0}^2.$$

For every non-empty word $X$ we call the word $X^2=XX$ a {\it square}. For instance, the English word $\mathtt{\color{red}{hots}\color{blue}{hots}}$ is a square. This simple structure is a frequent theme of investigations since the celebrated 1906 result of Thue \cite{Thue} (see the next subsection).
	In this paper, however, we focus on a natural generalization of squares created more frivolously from two copies of the same word.
A \emph{shuffle square} is a word $W$ which contains twins of double length equal to exactly $|W|$; such twins will be called \emph{perfect}, as they fill the whole word with no gaps. For instance, $\mathtt{00001001}$ is a shuffle square with the~perfect twins equal to $\mathtt{0001}$: $$\mathtt{\underline{\textcolor{red}{0}}\overline{\textcolor{blue}{0001}}\underline{\textcolor{red}{001}}}.$$ Obviously, a necessary condition for a word to be a~shuffle square is that each letter in the~word occurs an even number of times. We will call such words \emph{even} (elsewhere, they are called \emph{tangrams}, cf. \cite{Deb2024,Och2025}).

\subsection{Related results and conjectures}

Shuffle squares were introduced in 2012 by Henshall, Rampersad, and Shallit \cite{Hen2012}, and since then their various aspects have been intensively studied (see \cite{Bul2020,Bul2023,Buss-Soltys,Gry2024,He2021}).

The main challenge raised in \cite{Hen2012} is to determine the number $S_k(n)$ of distinct $k$-ary shuffle squares of length $2n$. The problem seems hard even in the most basic case of $k=2$.  He, Huang, Nam, and Thaper \cite{He2021} derived an asymptotic formula for $S_k(n)$, stipulated in \cite{Hen2012}, and  proved that $S_2(n)\geqslant \binom{2n}{n}$. They also posed an intriguing conjecture stating that almost all \emph{even} binary words are shuffle squares.   Since exactly a half of binary words of fixed even length are even, the conjecture is equivalent to
\begin{equation}\label{Limit HHNT-Conjecture}
	\lim_{n\rightarrow \infty}\frac{S_2(n)}{2^{2n-1}}=1.
\end{equation}

For a given binary word $W$, let us denote by $g(W)$ the minimum number of gaps in $W$, that is, letters one has to delete from $W$ in order to get a shuffle square. For instance, $g(\mathtt{0011})=0$, while $g(\mathtt{0110})=2$. Let $g(n)$ be the maximum of $g(W)$ over all binary words $W$ of fixed length $n$. 

By a sophisticated approach via a regularity lemma for words, Axenovich, Person, and Puzynina \cite{Axenovich-Person-Puzynina} showed that $g(n)=o(n)$. This means that every binary word of length $n$ contains a shuffle square of length asymptotically close to $n$. On the other hand, by an~explicit construction they showed that $g(n)=\Omega(\log n)$, which was improved by Bukh \cite{Bukh} to $g(n)=~\Omega(n^{1/3})$ (see \cite{BR} for a~proof). The exact asymptotic order of $g(n)$ remains a mystery, though some intuition suggests that it should be $g(n)=\Theta(n^{1/2})$.

In the spirit of Thue, one may also investigate various \emph{avoidability} properties involving shuffle squares. Generally, a set of words $\mathcal F$ is \emph{$k$-avoidable} if there exist arbitrarily long $k$-ary words with no blocks belonging to $\mathcal F$. Thue proved in \cite{Thue} that squares are $3$-avoidable. This result is considered the starting point of Combinatorics on Words, a wide discipline with plenty of connections and applications in various areas (see \cite{BerstelPerrin}, \cite{Lothaire}).

What is the smallest $k$ such that shuffle squares are $k$-avoidable? Using the probabilistic method, Currie \cite{Currie} proved that $k\leqslant 10^{40}$. Later, this was improved to $k\leqslant 10$ by M\"{u}ller~\cite{Muller}, and to $k\leqslant 7$, by Gu\'egan and Ochem \cite{GueganOchem} and, independently, by Grytczuk, Kozik, and Zaleski \cite{GrytczukKozikZaleski}. The last result is true in a more complex \emph{list} variant, where the letters of constructed words have to be chosen from  alphabets assigned individually to each position. Currently the best result is due to Bulteau, Jug\'e, and Vialette \cite{Bul2023}, who proved that shuffle squares are $6$-avoidable. They also conjecture that $k=4$, which would be best possible.

On the algorithmic side, Bulteau and Vialette \cite{Bul2020} proved that deciding if a given binary word is a shuffle square is NP-complete. The proof reduces the problem to \emph{Bin-Packing} via graph structures similar to the ones we introduce and explore in this paper.

In the last section we present more open problems related to shuffle squares.

\subsection{Our results and the method}
In this paper we first characterize shuffle squares in terms of ordered nest-free graphs (see Proposition~\ref{Proposition Characterization}). Then, based on this characterization,
we establish conditions under which binary words belonging to specified classes are shuffle squares and conditions under which they are not.

In Section~\ref{2pieces}, as a gentle warm-up,  we consider  words with four runs and five runs, as well as  words with all $\mathtt1$-runs of length one (see, respectively, Propositions~\ref{abcd},~\ref{abcde} and~\ref{only1}).

In Section~\ref{sec-abba}, we prove our main result.

 \begin{theorem}\label{abba}
For an integer $n\geqslant 1$, let $(a_1,\ldots,a_{n+1})$ and $(b_1,\ldots,b_n)$ be two sequences of positive integers such that:
\begin{itemize}
	\item [(1)] $a_1$ and $a_{n+1}$ are odd, while $a_2,\ldots,a_{n}$ are even,
	\item [(2)] for every $I\subset[n]$, $\sum_{i\in I}b_i\neq \sum_{i\not\in I}b_i$, that is, the sequence $(b_1,\ldots,b_n)$ cannot be split into two disjoint subsequences with the same sum.
\end{itemize}
Then the word  $$W=\mathtt{1}^{a_1}\mathtt{0}^{b_1}\mathtt{1}^{a_2}\mathtt{0}^{b_2}\mathtt{1}^{a_3}\cdots\mathtt{1}^{a_n}\mathtt{0}^{b_n}\mathtt{1}^{a_{n+1}}$$ is \emph{not} a shuffle square. 
\end{theorem}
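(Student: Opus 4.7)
Suppose, for contradiction, that $W$ is a shuffle square with perfect twins $X$ and $Y$. Since letters within a single run of $W$ are indistinguishable, any such decomposition is captured by count vectors $(X_1,\ldots,X_{n+1})$ and $(Z_1,\ldots,Z_n)$, where $X_i\in\{0,\ldots,a_i\}$ is the number of $\mathtt{1}$'s from the $i$-th $\mathtt{1}$-run of $W$ placed in $X$ and $Z_j\in\{0,\ldots,b_j\}$ the number of $\mathtt{0}$'s from the $j$-th $\mathtt{0}$-run placed in $X$; set $Y_i := a_i - X_i$ and $U_j := b_j - Z_j$. Then, after deleting zero-exponent blocks and merging adjacent like-letter blocks,
\[X = \mathtt{1}^{X_1}\mathtt{0}^{Z_1}\cdots\mathtt{1}^{X_{n+1}},\qquad Y = \mathtt{1}^{Y_1}\mathtt{0}^{U_1}\cdots\mathtt{1}^{Y_{n+1}},\]
and the constraint $X=Y$ is what I shall contradict.

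The core of my proof is a lemma asserting that every $\mathtt{0}$-run of $W$ is \emph{monochromatic}, i.e.\ $Z_j\in\{0,b_j\}$ for each $j\in[n]$. Granted this, setting $I := \{j : Z_j = b_j\}$ immediately gives
\[\sum_{j\in I}b_j \;=\; \sum_{j=1}^n Z_j \;=\; |X|_{\mathtt{0}} \;=\; \tfrac12\sum_{j=1}^n b_j \;=\; \sum_{j\notin I}b_j,\]
violating hypothesis (2) and finishing the proof. To establish monochromaticity of $Z_1$, assume $0<Z_1<b_1$. Since $X=Y$ starts with $\mathtt{1}$, if either $X_1$ or $Y_1$ were zero the corresponding twin would start with a $\mathtt{0}$ absorbed from the first $\mathtt{0}$-run of $W$; hence $X_1,Y_1\ge 1$. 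Because each of $X$ and $Y$ then reads a $\mathtt{0}$ already within the first $\mathtt{0}$-run of $W$, the first $\mathtt{1}$-run of $X$ has length exactly $X_1$ and that of $Y$ has length exactly $Y_1$. Matching via $X=Y$ forces $X_1 = Y_1$, so $a_1=2X_1$ is even, contradicting (1). Reversing $W$ (an operation preserving the form of the hypothesis) yields $Z_n\in\{0,b_n\}$ by the symmetric argument.

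The main obstacle is propagating monochromaticity to the interior $\mathtt{0}$-runs, where the surrounding $\mathtt{1}$-runs have even length and the direct parity trick stalls. I plan an induction on the leftmost index $j^*$ at which $Z_{j^*}$ might still be split. Knowing $Z_1,\dots,Z_{j^*-1}\in\{0,b_k\}$, the prefixes of $X$ and $Y$ read from $W$ up through position $j^*$ decompose as concatenations of whole $\mathtt{0}$-runs of $W$ (those in $X$ versus those in $Y$) interleaved with contributions $X_i$ and $Y_i$. Equating these two prefixes block-by-block as prefixes of the common word $Z$ produces (a) a parity equation $2X_1 + \sum_{2\le i\le j^*} X_i = a_1$ from matching the first $\mathtt{1}$-run of $Z$, together with (b) pairwise equalities $b_k = b_{k'}$ matching the $\mathtt{0}$-runs of $W$ wholly assigned to $X$ with those wholly assigned to $Y$. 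A finite casework then shows that either $a_1$ is forced to be even, or the pairing in (b) together with the residual split at $j^*$ produces a subset $I\subseteq[n]$ with $\sum_{j\in I}b_j = \sum_{j\notin I}b_j$, contradicting (2) at once. A symmetric argument working inward from the right end of $W$ handles any remaining configurations, establishing monochromaticity for all $j$ and hence the theorem.
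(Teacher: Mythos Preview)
Your base case (that $Z_1\in\{0,b_1\}$, via the parity of $a_1$) is clean and correct, and the final deduction from full monochromaticity to a violation of~(2) is also fine. The gap is the inductive step, and it is genuine. The monochromaticity lemma you state is \emph{not} a consequence of hypothesis~(1) alone: take $n=4$, $(a_i)=(1,2,2,2,1)$, $(b_j)=(1,2,2,1)$, so that (1) holds while (2) fails. Then
\[
W=\mathtt{1\,0\,11\,00\,11\,00\,11\,0\,1}
\]
is a shuffle square with perfect twins $X=Y=\mathtt{1010101}$, obtained from the count vectors $X_i=(1,1,1,1,0)$ and $Z_j=(1,1,1,0)$. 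Here $Z_2=Z_3=1$ with $b_2=b_3=2$, so the interior $\mathtt 0$-runs are \emph{not} monochromatic. Hence any inductive step that, at $j^*=2$, tries to derive either ``$a_1$ even'' or an equal-sum split of $(b_j)$ from the \emph{local} data $\{Z_1=b_1,\ 0<Z_2<b_2\}$ must fail: in this instance $a_1$ stays odd, and the equal-sum split $b_1+b_3=b_2+b_4$ is a global fact invisible at step $j^*=2$. Your claimed equation ``$2X_1+\sum_{2\le i\le j^*}X_i=a_1$'' is also not what prefix-matching actually yields, and the ``pairwise equalities $b_k=b_{k'}$'' mechanism is left unspecified.

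The paper proceeds along a different axis. It does not attempt monochromaticity; instead, via the ordered-graph representation (Proposition~\ref{Proposition Characterization}), hypothesis~(1) forces the $\mathtt 1$-subgraph $G_1$ to contain a path $P$ from the leftmost to the rightmost vertex, and then a separate lemma (Lemma~\ref{Lemma Path-Cycle}) shows that in any nest-free ordered graph, a cycle disjoint from such an end-to-end path must be even. Consequently $G_0$ is bipartite, and its bipartition immediately gives the forbidden equal-sum split, contradicting~(2). Bipartiteness of $G_0$ is strictly weaker than monochromaticity (and is exactly what holds in the example above), which is why the paper's route closes while yours does not.
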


 It follows (see Corollary~\ref{abba-cor} with $r=2$ in Section~\ref{sec-abba}) that the words $(\mathtt{1001})^n$, for $n$ odd, are \emph{not} shuffle squares. This was conjectured by Komisarski \cite{Kom}, who used to call it the \emph{odd ABBA problem}. Let us also mention that an alleged proof was presented by Fortuna \cite{For}, though, up to our knowledge, it has not yet been written down. 
Actually, we show (see Proposition~\ref{Ths1}) that the words $(\mathtt{1001})^n$, $n$ odd, are the \emph{only} binary words with all $\mathtt0$-runs of length two and all $\mathtt1$-runs of length at most two which are not shuffle squares.

\begin{example}\rm One class of instances which trivially satisfy  assumption (2) of Theorem~\ref{abba} is when $2\max_i b_i>\sum_i b_i$. Here is one example of such a word: $$\mathtt1\;\mathtt0^2\;\mathtt1^2\;\mathtt0^2\;\mathtt1^2\;\mathtt0^{10}\;\mathtt1^2\;\mathtt0^2\;\mathtt1^2\;\mathtt0^2\;\mathtt1.$$
\end{example}

\begin{example}\rm Let us notice that conditions (1) and (2) of Theorem \ref{abba} do not characterize binary words which are not shuffle squares. Consider the word $W=\mathtt{101100110001}$. The~related sequences of capacities are $(1,2,2,1)$ and $(1,2,3)$, so  condition (1) is satisfied while (2)~is~not, since $1+2=3$. Nevertheless, $W$ is not a shuffle square, as it can be verified by inspection (or by the next-free graph method introduced in this paper).

\end{example}

In the last section, we tackle the question of how far a binary word may be from a shuffle square. Inspired by a personal communication with Boris Bukh and his notes \cite{Bukh}, in \cite{BR}   the~following conjecture has been implicitly stated.

\begin{conjecture}\label{odd}
There is a constant $c$ such that for every $n$ we have $g(n)\geqslant c\sqrt n$, that is, there exists a binary word
of length $n$ with largest twins leaving out at least $c\sqrt n$
elements.
\end{conjecture}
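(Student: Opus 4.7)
The plan is to construct, for each $n$, an explicit binary word $W_n$ of length $n$ whose largest twins leave at least $c\sqrt{n}$ gaps, for some absolute constant $c$. Since Bukh's $n^{1/3}$ bound already rules out the most naive iterated concatenation of short non-shuffle-squares, and since the abstract exhibits a counterexample to the stipulation that the words $\mathtt{1}^{m}\mathtt{0}^{m-2}\mathtt{1}^{m-4}\cdots$ are far from being shuffle squares, any successful construction must combine local structural rigidity with a subtle global invariant that survives long-range ``twin bridging'' across its pieces.

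My first attempt would be a word of the form $W_n=B_1 B_2 \cdots B_k$ with $k=\Theta(\sqrt{n})$ blocks, each of length $\Theta(\sqrt{n})$, where each $B_i$ is an ABBA-type word
$$\mathtt{1}^{a_{i,1}}\mathtt{0}^{b_{i,1}}\mathtt{1}^{a_{i,2}}\cdots\mathtt{1}^{a_{i,m_i+1}}$$
satisfying the hypotheses of Theorem~\ref{abba}, possibly padded by short ``interface'' runs chosen to restrict how twins may cross between consecutive $B_i$. The capacities $(a_{i,j})$ and $(b_{i,j})$ within each block would be tuned so that subset-sum equalities among the $b_{i,j}$'s are not merely forbidden (as in condition (2) of Theorem~\ref{abba}) but are bounded away from balanced by an additive margin of order $\sqrt{n}$ --- for example by taking the $b_{i,j}$ to form a Sidon-like sequence whose all subset-sum differences are separated by a positive gap.

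Using the ordered nest-free graph characterization of Proposition~\ref{Proposition Characterization}, I would analyze any pair of twins $(X,Y)$ in $W_n$ by tracking, for each block $B_i$, the imbalance between the letters of $B_i$ assigned to $X$ and those assigned to $Y$. Theorem~\ref{abba}-type invariants already force this imbalance to be nonzero in each block; the hope is that the quantitative sharpening of condition (2) above propagates to a \emph{global} deficit of order $\sqrt{n}$ letters that cannot be twin-matched, and therefore must be gaps. The parity condition (1) on the $\mathtt{1}$-runs would handle the ``cheap'' block traversals, while the subset-sum separation would bound the damage caused by long twins bridging many blocks simultaneously.

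The main obstacle, I expect, is exactly this amortization step: a single long twin may weave in and out of all $k$ blocks, so that its contribution to each $B_i$ looks like a small fractional imbalance which \emph{a priori} could be absorbed globally. Controlling these bridging effects without letting per-block deficits cancel out is the core difficulty, and it appears to require either (i) a genuinely new parameter measuring ``twin entropy'' which is additive over blocks and stable under bridging, or (ii) a rigid interface design between consecutive $B_i$'s, say long $\mathtt{0}$-runs of sharply chosen lengths, forcing every nest-free edge except $O(1)$ of them to respect block boundaries. Identifying such an invariant or interface --- beyond what Theorem~\ref{abba} provides for a single instance --- seems to be the decisive new ingredient needed to push the current $n^{1/3}$ lower bound of Bukh up to $\sqrt{n}$.
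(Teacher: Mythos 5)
This statement is a \emph{conjecture} in the paper, not a theorem: the authors do not prove it, they explicitly leave the asymptotics of $g(n)$ open (the best known lower bound being Bukh's $g(n)=\Omega(n^{1/3})$), and in Proposition~\ref{close} they even dismantle the most natural candidate construction by showing $g(O_{m,r})\leqslant 23$. Your text is likewise not a proof but a research programme, and you candidly identify its unproved core yourself: the ``amortization step'' that would convert per-block obstructions into a global deficit of order $\sqrt{n}$. That step is precisely where the argument is missing, so as it stands the proposal establishes nothing beyond what is already known.

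Two concrete reasons why the gap is serious. First, Theorem~\ref{abba} is a purely qualitative statement: its subset-sum condition (2) certifies only that $W$ is not a shuffle square, i.e.\ $g(W)\geqslant 2$; there is no quantitative version in the paper (or elsewhere) saying that if all subset-sum differences of the $b_i$'s are at least $\Delta$ then $g(W)\geqslant\Delta$ or anything of the sort, and the proof via Lemma~\ref{Lemma Path-Cycle} (an odd cycle nested inside a path) gives no such margin --- it is a parity obstruction, not a counting one. Second, the deletion distance $g$ is emphatically \emph{not} superadditive over concatenation of rigid blocks: the paper's own $O_{m,r}$ example is a concatenation of runs each of which locally forces a loss (inequality~\eqref{g} style reasoning), yet long twins weaving across all the runs reduce the total deficit to at most $23$ independently of the number of blocks. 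This is exactly the ``bridging'' failure mode you worry about, and the paper demonstrates that it actually occurs rather than being a merely hypothetical obstacle. Until you exhibit an invariant that is additive over blocks and provably stable under such bridging, the construction cannot yield $g(n)=\Omega(\sqrt n)$, and the conjecture remains open.
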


In view of earlier constructions, we had thought for some time that a natural candidate to facilitate Conjecture~\ref{odd} was the binary word $$O_{m,r}=\mathtt1^m\mathtt0^{m-2}\mathtt1^{m-4}\mathtt0^{m-6}\mathtt1^{m-8}\cdots$$ consisting of $r$ runs whose lengths form a decreasing sequence of consecutive odd numbers, for some odd $m$ and $r\leqslant (m+1)/2$. Here we rebuke this belief by showing that $g(O_{m,r})\le 23$, that is, by finding twins in $O_{m,r}$ of double length at least $|O_{m,r}|-23$, for all $m$ and $r$.

But, in our opinion, at least as important as the obtained results is the method we propose to prove them with. To each pair of \emph{canonical} twins we uniquely assign an \emph{ordered graph} (see the formal definitions in the next sections). Thus, instead of looking for twins in a word or deciding if it is a shuffle square, we construct a corresponding graph or show that such a graph does not exist. We believe that this approach is more transparent, insightful, and ultimately has some potential in this area of research.
	
This paper constitutes a full (journal) version of a conference extended abstract \cite{WORDS}. Compared with \cite{WORDS}, it contains new results (Theorem~\ref{abba}, Proposition~\ref{abcde}, Corollary~\ref{1and2}, and Claim~\ref{cl}), the proof of Proposition~\ref{Ths1} which was omitted in \cite{WORDS}, a new subsection~\ref{cantwins}, as well as more figures and examples.

\subsection{Canonical twins}\label{cantwins}

Before we turn to graphic representation of shuffle squares and, more generally, twins in words, we need to introduce an important notion of canonical twins. But first we define monotone twins. Twins $X,Y$ with supports $\supp(X)=\{i_1,\dots,i_t\}$ and $\supp(Y)=\{j_1,\dots,j_t\}$ are \emph{monotone} if for every $h\in[t]$, we have $i_h<j_h$.  By definition, among monotone twins there is always the ``first'' twin and the ``second'' twin, so we may now treat them as ordered pairs and denote by $(X,Y)$.

Observe that if  in a word $W$ there are twins $X,Y$ of length $t$, then there are also monotone twins $(X',Y')$ of length $t$
(and also with $\supp(X',Y')=\supp(X,Y)$ and $X'=X$).
We obtain them by swapping the elements which appear in $X$ and $Y$ in the wrong order.
 More precisely, let $S=\{h: i_h>j_h\}$.
Then,  the sets
$$I'=\{i_h: h\notin S\}\cup\{j_h: h\in S\}\quad\mbox{and}\quad J'=\{j_h: h\notin S\}\cup\{i_h: h\in S\}$$
 support monotone twins.

For instance, in Example~\ref{small} from the previous section, the twins are not monotone, but with $S=\{2,3,4,5\}$, that is after swapping four pairs of elements between $X$ and $Y$ (all but the first one), the pair of twins $(X',Y')$, indicated here:
$$W=\mathtt{\underline{\textcolor{red}{1}}\;\overline{\textcolor{blue}{1}}\;\underline{\textcolor{red}{1}}\underline{\textcolor{red}{0}}
\underline{\textcolor{red}{0}}\;\overline{\textcolor{blue}{1}}\;
\underline{\textcolor{red}{0}}\;\overline{\textcolor{blue}{0}}\overline{\textcolor{blue}{0}}\;
11\;\overline{\textcolor{blue}{0}}},$$
is monotone.

Besides swapping there are other operations which do not change the twins except for their location and mutual entanglement. Given $i_g,i_h\in \supp(X)$ and $j_g,j_h\in \supp(Y)$, $g<h$, with $j_g<i_h$, $w_{i_h}=w_{j_g}$, and $$\{j_g+1,\dots,i_h-1\}\cap \supp(X,Y)=\emptyset,$$ the \emph{$(g,h)$-rewiring} of the~\emph{monotone} twins $(X,Y)$ in $W$ is a new pair of twins, $(X',Y')$ with $$\supp(X')=(\supp(X)\setminus\{i_h\})\cup\{j_g\}$$ and $$\supp(Y')=(\supp(Y)\setminus\{j_g\})\cup\{i_h\}.$$ Note that $(X',Y')$ are still monotone.
Monotone twins $(X,Y)$ for which a rewiring operation is no longer possible, enjoy the property that within each run all elements of $X$ precede all elements of $Y$.

Finally, a trivial operation of \emph{shifting}  moves all the gaps to the right end of each run. It applies whenever there is a pair of consecutive positions, $\{i,i+1\}$, such that $i$ is a gap while, say, $i+1\in \supp(X)$. Then we switch them around, that is, for the new $X'$ we have $$\supp(X')=(\supp(X)\setminus\{i+1\})\cup\{i\}.$$ When $i+1\in \supp(Y)$, the operation is analogous. If this operation is also no longer possible, then the resulted twins  partition each run of $W$ into three consecutive blocks (some might be empty) with the elements of $X$ to the left, followed by the elements of $Y$, followed by gaps.
Such twins will be called \emph{canonical}.
It is crucial for us to observe that among the longest twins in a word there is always (at least) one pair which is canonical.

\begin{example}\label{oper}\rm
 The twins in $$W=\mathtt{\underline{\textcolor{red}{1}}\; \overline{\textcolor{blue}{1}}\;\underline{\textcolor{red}{1}}\underline{\textcolor{red}{0}}
\underline{\textcolor{red}{0}}\;\overline{\textcolor{blue}{1}}\;
\underline{\textcolor{red}{0}}\;\overline{\textcolor{blue}{0}}\overline{\textcolor{blue}{0}}\;
11\;\overline{\textcolor{blue}{0}}}$$
are not canonical, due to the (dis)order in the first 1-run. A single rewiring operation, exactly the $(1,2)$-rewiring, makes it canonical:
$$W=\mathtt{\underline{\textcolor{red}{1}}\underline{\textcolor{red}{1}}\;\overline{\textcolor{blue}{1}}\;\underline{\textcolor{red}{0}}
\underline{\textcolor{red}{0}}\;\overline{\textcolor{blue}{1}}\;
\underline{\textcolor{red}{0}}\;\overline{\textcolor{blue}{0}}\overline{\textcolor{blue}{0}}\;
11\;\overline{\textcolor{blue}{0}}}$$ (see Fig. \ref{fig1}).

\begin{figure}[h]
\centering
\begin{tikzpicture}
\coordinate (P1) at (0, 0);
\coordinate (P2) at (1, 0);
\coordinate (P3) at (2, 0);
\coordinate (P4) at (3, 0);
\coordinate (P5) at (4, 0);
\coordinate (P6) at (5, 0);
\coordinate (P7) at (6, 0);
\coordinate (P8) at (7, 0);
\coordinate (P9) at (8, 0);
\coordinate (P10) at (9, 0);
\coordinate (P11) at (10, 0);
\coordinate (P12) at (11, 0);

\foreach \P in {P1, P2, P3, P4, P5, P6, P7, P8, P9, P10, P11, P12}
 {\fill (\P) circle (2pt);}

\draw[-, red,dashed,bend left=90] (P1) to (P3);
\draw[-, red,dashed,bend left=90] (P2) to (P6);
\draw[-,bend left=90] (P1) to (P2);
\draw[-, bend left=90] (P3) to (P6);
\draw[-, bend left=90] (P4) to (P8);
\draw[-, bend left=90] (P5) to (P9);
\draw[-, bend left=90] (P7) to (P12);

\end{tikzpicture}
\caption{Non-canonical and canonical (red dashed)  twins in the word $W=~\mathtt{111001000110}$. The arcs join the $i$-th element of $X$ with the corresponding $i$-th element of $Y$, $i=1,\dots,5$.}\label{fig1}
\end{figure}

In another example, $$W=\mathtt{\underline{\textcolor{red}{0}}\;1\;\underline{\textcolor{red}{1}}\;\overline{\textcolor{blue}{0}}\overline{\textcolor{blue}{1}}
\;\underline{\textcolor{red}{1}}\;\overline{\textcolor{blue}{1}}\;\underline{\textcolor{red}{0}}\;\overline{\textcolor{blue}{0}}},$$ the twins are not canonical for two reasons: a gap in front of the first $\mathtt{1}$-run and an element of $Y$ preceding an element of $X$ in  the second $\mathtt{1}$-run. Two operations, one shifting and one rewiring brings it to the canonical form
$$W=\mathtt{\underline{\textcolor{red}{0}}\underline{\textcolor{red}{1}}\;1\;\overline{\textcolor{blue}{0}}\;\underline{\textcolor{red}{1}}\;\overline{\textcolor{blue}{1}}
\overline{\textcolor{blue}{1}}\;\underline{\textcolor{red}{0}}\;\overline{\textcolor{blue}{0}}}$$ (see Fig. \ref{fig2}).

\begin{figure}[h]
\centering
\begin{tikzpicture}
\coordinate (P1) at (0, 0);
\coordinate (P2) at (1, 0);
\coordinate (P3) at (2, 0);
\coordinate (P4) at (3, 0);
\coordinate (P5) at (4, 0);
\coordinate (P6) at (5, 0);
\coordinate (P7) at (6, 0);
\coordinate (P8) at (7, 0);
\coordinate (P9) at (8, 0);

\foreach \P in {P1, P2, P3, P4, P5, P6, P7, P8, P9}
 {\fill (\P) circle (2pt);}

\draw[-, red,dashed,bend left=90] (P2) to (P6);
\draw[-, red,dashed,bend left=90] (P5) to (P7);
\draw[-,bend left=90] (P1) to (P4);
\draw[-, bend left=90] (P3) to (P5);
\draw[-, bend left=90] (P6) to (P7);
\draw[-, bend left=90] (P8) to (P9);;

\node[below=5pt] at (P1) {$\mathtt{0}$};
\node[below=5pt] at (P2) {$\mathtt{1}$};
\node[below=5pt] at (P3) {$\mathtt{1}$};
\node[below=5pt] at (P4) {$\mathtt{0}$};
\node[below=5pt] at (P5) {$\mathtt{1}$};
\node[below=5pt] at (P6) {$\mathtt{1}$};
\node[below=5pt] at (P7) {$\mathtt{1}$};
\node[below=5pt] at (P8) {$\mathtt{0}$};
\node[below=5pt] at (P9) {$\mathtt{0}$};

\end{tikzpicture}
\caption{Non-canonical and canonical (red dashed) twins in the word $W=\mathtt{011011100}$.}\label{fig2}
\end{figure}

\end{example}

\section{Graphic representations}\label{gr}

Here we introduce our main tool---a representation of shuffle squares and, more generally, canonical twins in terms of ordered graphs with no nests.
A similar idea was utilized in \cite{Bul2020} as a reduction tool in the proof of the NP-completeness of the decision problem for shuffle squares.

\subsection{Ordered graphs from words}

An \emph{ordered graph} is just a graph with its vertex set linearly ordered, say \emph{from left to right}. We allow loops and parallel edges. A \emph{nest} in an ordered graph consists of a pair of disjoint edges (including loops) $e$ and $f$ with $\min e<\min f$ and $\max e>\max f$ (see Fig. \ref{fig3}). An ordered graph is \emph{nest-free} if no pair of its edges forms a nest.

\begin{figure}[h]
\centering
\begin{tikzpicture}
\coordinate (P1) at (0, 0);
\coordinate (P2) at (1, 0);
\coordinate (P3) at (2, 0);
\coordinate (P4) at (3, 0);

\draw[-,bend left=90] (P1) to node[midway, above]{$e$} (P4);
\draw[-,bend left=90] (P2) to node[midway, below]{$f$} (P3);

\foreach \P in {P1, P2, P3, P4}
 {\fill (\P) circle (2pt);}
\end{tikzpicture}
\caption{A nest in an ordered graph.}\label{fig3}
\end{figure}

Given an ordered graph $G$ and a vertex $u\in V(G)$, the \emph{right degree} $\deg_G^{(\rightarrow)}(u)$ of $u$ in $G$ is defined as the number of edges going from $u$ to the right, and the \emph{left degree} $\deg_G^{(\leftarrow)}(u)$ is defined analogously (each loop at $u$ contributes 1 to  $\deg_G^{(\rightarrow)}(u)$ and 1 to $\deg_G^{(\leftarrow)}(u)$). Obviously, we have $$\deg_G(u)=\deg_G^{(\rightarrow)}(u)+\deg_G^{(\leftarrow)}(u)$$  for every $u\in V(G)$

We now describe a (unique) representation of canonical twins as a nest-free graph. Let a $k$-ary word $W=w_1\cdots w_n$ have canonical twins $(X,Y)$ where $X=w_{i_1}\cdots w_{i_t}$ and $Y=~w_{j_1}\cdots w_{j_t}$. In particular, $$\{i_1,\dots, i_t\}\cap\{j_1,\dots, j_t\}=\emptyset,$$ and for all $h=1,\dots,t$, we have $i_h<j_h$ and $w_{i_h}=w_{j_h}$. As an intermediate step, define an ordered graph $M=M_{X,Y}$ on the set of vertices $\{1,\ldots,n\}$ with edges $\{i_1,j_1\},\dots,\{i_t,j_t\}$. So, $M$ consists of a \emph{matching} of size $t$ and $n-2t$ isolated vertices. 
 Clearly, there are no nests in $M$.

Let $W=U_1\cdots U_m$, where each block $U_h$ is a run in $W$.
Contract each set $\supp(U_h)$ to a~single vertex $u_h$, keeping all the edges of $M$ in place and keeping the order $u_1<\cdots<u_m$. We denote the obtained ordered graph by $G=G_{X,Y}$.
Observe that $G$ is still nest-free and, moreover, consists of $k$ separated graphs on vertex sets $N_w=\{h:\text{$U_h$ is a $w$-run}\}$, one for each letter $w\in [k]$.
The degrees of vertices in $G$ are $$\deg_G(u_h)=|\supp(U_h)\cap \supp(X,Y)|$$ for $h=1,\dots,m$ (see Examples~\ref{ter} and~\ref{small1}).

 From $G$ it is straightforward to uniquely reconstruct $(X,Y)$. The uniqueness is a consequence of the fact that we restrict ourselves to canonical twins. We simply scan the vertices $u_h$ of $G$ from left to right and record their right degrees $\deg_G^{(\rightarrow)}(u_h)$  and left degrees $\deg_G^{(\leftarrow)}(u_h)$. Then from each run $U_h$, assign the first $\deg_G^{(\rightarrow)}(u_h)$ elements to $X$ and the next $\deg_G^{(\leftarrow)}(u_h)$ to $Y$, leaving the remaining vertices of $U_h$, if there are any, unassigned (gaps).
But if we are only interested in the length of the twins, we may infer that there is a pair of twins in $W$ of double length equal to $\sum \deg_G(u_h)$.

This allows us to apply this ``graphic'' approach also in situations when our goal is to find  as large as possible twins in $W$. Then we simply attempt to construct a nest-free ordered graph $G$ on vertices $u_h$, $h=1,\dots,m$, with no edges joining vertices from different sets $N_w$, and with degrees as close as possible to (but not greater than) the lengths of the corresponding runs $U_h$. We call $|U_h|$ the \emph{capacity} of vertex $u_h$. In particular, if we succeed to create such a~graph $G$ with all $\deg_G(u_h)=|U_h|$, then we know that $W$ is a shuffle square and we can produce a pair of perfect twins witnessing this property.

Let us summarize the above discussion in the following statement characterizing shuffle squares in terms of their ordered graph representations.

\begin{proposition}\label{Proposition Characterization}
Let $W$ be a $k$-ary word and let $W=U_1\cdots U_m$ be a~decomposition of $W$ into its runs. Then $W$ is a shuffle square if and only if there exists an ordered graph $G$ on the set $\{u_1,\dots, u_m\}$ satisfying the following conditions.
\begin{itemize}
	\item[1.] If $u_i$ is joined to $u_j$ by an edge, then the corresponding runs $U_i$ and $U_j$ are $w$-runs for the same letter $w$.
	\item[2.] The degree of each vertex $u_i$ satisfies $\deg_G(u_i)=|U_i|$.
	\item[3.]  $G$ is nest-free.
\end{itemize}
Moreover, if there is a graph $G$ satisfying properties 1-3, then one can construct (canonical) perfect twins in $W$ by assigning the first $\deg_G^{(\rightarrow)}(u_h)$ elements of every run $U_h$ to $X$ and the~next $\deg_G^{(\leftarrow)}(u_h)$ elements to $Y$.
\end{proposition}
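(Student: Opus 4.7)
The plan is to formalize the construction outlined in the paragraphs preceding the statement, verifying both implications in turn.

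For the forward direction, suppose $W$ is a shuffle square witnessed by perfect twins $(X,Y)$. First I would invoke the observation from Subsection~\ref{cantwins} that shifting and rewiring operations preserve the length and letter content of twins, so any longest pair of twins admits a canonical representative; since perfect twins are automatically longest, I may assume $(X,Y)$ is canonical (hence monotone). I then construct the intermediate matching $M=M_{X,Y}$ on $\{1,\ldots,n\}$ whose edges are the pairs $\{i_h,j_h\}$ for $h=1,\dots,t$. Three properties of $M$ need verification: every edge joins positions carrying the same letter (from $X=Y$), every edge satisfies $i_h<j_h$ (monotonicity), and $M$ is nest-free. Only the last needs a short argument: two edges nesting as $i_g<i_h<j_h<j_g$ would force, in some run containing at least two of these indices, an $X$-position to sit among $Y$-positions or gaps, violating the canonical ``$X$-elements, then $Y$-elements, then gaps'' rule within each run. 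Contracting each run to its vertex $u_h$ preserves all three properties; by perfectness of $(X,Y)$ every position of $W$ is an endpoint of exactly one edge of $M$, giving $\deg_G(u_h)=|U_h|$ as in condition 2.

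For the backward direction, given a graph $G$ satisfying conditions 1--3, I would recover twins by the recipe in the statement: in each run $U_h$, the first $\deg_G^{(\rightarrow)}(u_h)$ positions become $X$-positions and the next $\deg_G^{(\leftarrow)}(u_h)$ positions become $Y$-positions. The technical heart is a pairing lemma: nest-freeness forces a canonical bijection between the right-slots at one endpoint of each edge and the left-slots at the other. Explicitly, I would order the right-edges at each $u_i$ by the position of their other endpoint (breaking ties among parallel edges arbitrarily), and symmetrically for the left-edges at each $u_j$; I would then match the $p$-th right-edge at $u_i$ with the corresponding left-edge at its right-neighbor. This lifts $G$ to a matching on $\{1,\ldots,n\}$ whose edges are pairs of equal-letter positions (condition 1), cover all of $W$ (condition 2), and remain nest-free (condition 3, via the leftmost-first convention). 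Reading the left and right endpoints of this position-level matching as $X$ and $Y$ in the natural order produces perfect monotone canonical twins, and the slot-numbering guarantees that the $h$-th letters of $X$ and $Y$ coincide, so $X=Y$.

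The main obstacle is the pairing lemma in the backward direction: showing that the ordering of right-slots at $u_i$ by right-neighbor position, and the symmetric ordering of left-slots at $u_j$, produce a globally consistent matching of the edges of $G$. I would prove this by induction on the number of edges, each inductive step using the absence of nests to guarantee that the leftmost right-going edge from any vertex points to the leftmost available left-slot at its target, so that removing this edge preserves nest-freeness and the induction hypothesis applies. Once this bookkeeping is in place, both implications reduce to unwinding definitions, and the ``moreover'' statement is immediate from the explicit construction used in the backward direction.
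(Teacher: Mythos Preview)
Your outline matches the paper's approach: the proposition is presented there as a summary of the preceding discussion, which constructs $M$ from canonical perfect twins, contracts to $G$, and asserts (without much detail) that the process inverts. Your backward direction, with the explicit pairing lemma lifting $G$ back to a position-level matching, actually supplies more justification than the paper does.

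One step is flawed, though. Your argument that $M$ is nest-free claims that a nest $i_g<i_h<j_h<j_g$ would force two of these four indices into a common run with an $X$-position misplaced among $Y$-positions; but nothing prevents all four indices from lying in distinct runs, so this reasoning does not go through. The correct argument is simpler and does not use canonicality at all: since $\supp(X)=\{i_1<\cdots<i_t\}$ and $\supp(Y)=\{j_1<\cdots<j_t\}$ are both increasing by the definition of a subword, $i_g<i_h$ forces $g<h$, hence $j_g<j_h$, contradicting $j_h<j_g$. Monotonicity alone already makes $M$ nest-free; canonicality is needed only so that the reconstruction from $G$ recovers the \emph{same} twins (the uniqueness claim), not for the existence direction.
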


\subsection{Examples of nest-free graphs on twins}

Let us now illustrate the nest-free graph approach to twins by a number of examples. In all figures, for clarity of exposition,  we adopt the convention that, given a word $W$, for each $w\in A$ the vertices in the set $N_w$ appear in one row, and the rows corresponding to different letters are drawn one beneath another (in the same order the letters appear in the word), but so that the global linear order is preserved. Also, in some figures next to each vertex we place (in parenthesis) its capacity.
\begin{example}\label{ter}\rm
The ternary word $$W=\underline{\textcolor{red}{\mathtt1}}\underline{\textcolor{red}{\mathtt1}}\;\overline{\textcolor{blue}{\mathtt1}}\;\underline{\textcolor{red}{\mathtt2}}
\underline{\textcolor{red}{\mathtt2}}
\underline{\textcolor{red}{\mathtt3}}\;\mathtt3\mathtt3\;
\overline{\textcolor{blue}{\mathtt1}}
\overline{\textcolor{blue}{\mathtt2}}\overline{\textcolor{blue}{\mathtt2}}
\overline{\textcolor{blue}{\mathtt3}}$$
has canonical twins of length 5 as indicated. The~corresponding matching $M$ and graph $G$ are given in Fig. \ref{fig4}.

Is $W$ a shuffle square? If it was, there would exist a graph satisfying  conditions 1-3 in Proposition~\ref{Proposition Characterization}. However,
there is no way to change $G$, so that $\deg_G(u_3)=3$, while keeping all other degrees intact. Indeed, $u_6$ has to be adjacent to $u_3$, so there cannot be a loop at $u_5$ which, in turn, implies that there is no loop at $u_3$ either.

If we swapped the $\mathtt{3}$-runs around and instead consider the word $W'=\mathtt{111223122333}$, then the graph in Fig. \ref{fig5} shows that $W'$ is a shuffle square:
$$W'=\underline{\textcolor{red}{\mathtt1}}\underline{\textcolor{red}{\mathtt1}}\;\overline{\textcolor{blue}{\mathtt1}}\;\underline{\textcolor{red}{\mathtt2}}
\underline{\textcolor{red}{\mathtt2}}
\underline{\textcolor{red}{\mathtt3}}\;
\overline{\textcolor{blue}{\mathtt1}}
\overline{\textcolor{blue}{\mathtt2}}\overline{\textcolor{blue}{\mathtt2}}\;\underline{\textcolor{red}{\mathtt3}}\;
\overline{\textcolor{blue}{\mathtt3}}\overline{\textcolor{blue}{\mathtt3}}.$$
\end{example}

\begin{figure}[h]
\centering
\begin{tikzpicture}[scale=.9]
\coordinate (P1) at (0, 0);
\coordinate (P2) at (1, 0);
\coordinate (P3) at (2, 0);
\coordinate (P4) at (3, 0);
\coordinate (P5) at (4, 0);
\coordinate (P6) at (5, 0);
\coordinate (P7) at (6, 0);
\coordinate (P8) at (7, 0);
\coordinate (P9) at (8, 0);
\coordinate (P10) at (9, 0);
\coordinate (P11) at (10, 0);
\coordinate (P12) at (11, 0);

\node[above left=15pt] at (P1) {$M$:};

\draw[draw=green,thick,dashed,rounded corners] (-.2, -.2) rectangle (2.2, .2);
\draw[draw=green,thick,dashed,rounded corners] (2.8, -.2) rectangle (4.2, .2);
\draw[draw=green,thick,dashed,rounded corners] (4.8, -.2) rectangle (7.2, .2);
\draw[draw=green,thick,dashed,rounded corners] (7.8, -.2) rectangle (8.2, .2);
\draw[draw=green,thick,dashed,rounded corners] (8.8, -.2) rectangle (10.2, .2);
\draw[draw=green,thick,dashed,rounded corners] (10.8, -.2) rectangle (11.2, .2);

\foreach \P in {P1, P2, P3, P4, P5, P6, P7, P8, P9,P10,P11,P12}
 {\fill (\P) circle (2pt);}

\draw[-,bend left=90] (P1) to (P3);
\draw[-, bend left=90] (P2) to (P9);
\draw[-, bend left=40] (P4) to (P10);
\draw[-, bend left=90] (P5) to (P11);
\draw[-, bend left=40] (P6) to (P12);

\node[below=5pt] at (P1) {$1$};
\node[below=5pt] at (P2) {$2$};
\node[below=5pt] at (P3) {$3$};
\node[below=5pt] at (P4) {$4$};
\node[below=5pt] at (P5) {$5$};
\node[below=5pt] at (P6) {$6$};
\node[below=5pt] at (P7) {$7$};
\node[below=5pt] at (P8) {$8$};
\node[below=5pt] at (P9) {$9$};
\node[below=5pt] at (P10) {$10$};
\node[below=5pt] at (P11) {$11$};
\node[below=5pt] at (P12) {$12$};

\coordinate (U1) at (1, -2);
\coordinate (U2) at (8, -2);
\coordinate (U3) at (3.5, -3);
\coordinate (U4) at (9.5, -3);
\coordinate (U5) at (6, -4);
\coordinate (U6) at (11, -4);

\node[left=120pt] at (U3) {$G$:};

\draw[-] (U1) to (U2);
\draw[-] (U1) to[in=50,out=130, distance=1.5cm] (U1);
\draw[-, bend left=5] (U3) to (U4);
\draw[-, bend left=5] (U4) to (U3);
\draw[-] (U5) to (U6);

\foreach \P in {U1, U2, U3, U4, U5, U6}
 {\fill [fill=green, draw=black, line width=0.5pt] (\P) circle (2.5pt);}

 \node[below=5pt] at (U1) {$u_1(3)$};
\node[below=5pt] at (U2) {$u_4(1)$};
\node[below=5pt] at (U3) {$u_2(2)$};
\node[below=5pt] at (U4) {$u_5(2)$};
\node[below=5pt] at (U5) {$u_3(3)$};
\node[below=5pt] at (U6) {$u_6(1)$};
\end{tikzpicture}
\caption{Matching $M$ and graph $G$ for $W=\underline{\textcolor{red}{\mathtt{11}}}\;\overline{\textcolor{blue}{\mathtt1}}\;\underline{\textcolor{red}{\mathtt2}}
\underline{\textcolor{red}{\mathtt2}}
\underline{\textcolor{red}{\mathtt3}}\;\mathtt3\mathtt3\;
\overline{\textcolor{blue}{\mathtt1}}
\overline{\textcolor{blue}{\mathtt2}}\overline{\textcolor{blue}{\mathtt2}}
\overline{\textcolor{blue}{\mathtt3}}$.}\label{fig4}
\end{figure}

\begin{figure}[h]
\centering
\begin{tikzpicture}
\coordinate (U1) at (1, -2.5);
\coordinate (U2) at (6, -2.5);
\coordinate (U3) at (3.5, -3.5);
\coordinate (U4) at (7.5, -3.5);
\coordinate (U5) at (5, -4.5);
\coordinate (U6) at (10, -4.5);

\draw[-] (U1) to (U2);
\draw[-] (U1) to[in=50,out=130, distance=1.5cm] (U1);
\draw[-, bend left=5] (U3) to (U4);
\draw[-, bend left=5] (U4) to (U3);
\draw[-] (U5) to (U6);
\draw[-] (U6) to[in=50,out=130, distance=1.5cm] (U6);

\foreach \P in {U1, U2, U3, U4, U5, U6}
 {\fill [fill=green, draw=black, line width=0.5pt] (\P) circle (2.5pt);}

 \node[below=5pt] at (U1) {$u_1(3)$};
\node[below=5pt] at (U2) {$u_4(1)$};
\node[below=5pt] at (U3) {$u_2(2)$};
\node[below=5pt] at (U4) {$u_5(2)$};
\node[below=5pt] at (U5) {$u_3(1)$};
\node[below=5pt] at (U6) {$u_6(3)$};
\end{tikzpicture}
\caption{The word $W'=\mathtt{111223122333}$ is a~shuffle square.}\label{fig5}
\end{figure}


\begin{example}\label{small1}\rm Let $$W=\underline{\textcolor{red}{\mathtt1}}\;\mathtt1\mathtt1\;\underline{\textcolor{red}{\mathtt0}}\underline{\textcolor{red}{\mathtt0}}\;\overline{\textcolor{blue}{\mathtt1}}
\overline{\textcolor{blue}{\mathtt0}}\overline{\textcolor{blue}{\mathtt0}}\;\mathtt0\;
\underline{\textcolor{red}{\mathtt1}}\;\overline{\textcolor{blue}{\mathtt1}}\;\mathtt0$$ and $(X,Y)$ be canonical twins in $W$ of length 4 given by $\supp(X)=\{1,4,5,10\}$
and $\supp(Y)=\{6,7,8,11\}$ (so, both equal $\mathtt{1001}$) as indicated. The corresponding matching $M$ and graph $G$ are given in Fig. \ref{fig6}. By adding a single loop at vertex $u_1$, we immediately obtain a~pair of twins of length 5 in $W$, $X'=Y'=\mathtt{11001}$, a bit different from that in Example~\ref{oper}.

\begin{figure}[h]
\centering
\begin{tikzpicture}[scale=1]
\coordinate (P1) at (0, 0);
\coordinate (P2) at (1, 0);
\coordinate (P3) at (2, 0);
\coordinate (P4) at (3, 0);
\coordinate (P5) at (4, 0);
\coordinate (P6) at (5, 0);
\coordinate (P7) at (6, 0);
\coordinate (P8) at (7, 0);
\coordinate (P9) at (8, 0);
\coordinate (P10) at (9, 0);
\coordinate (P11) at (10, 0);
\coordinate (P12) at (11, 0);

\node[above left=15pt] at (P1) {$M$:};

\draw[draw=green,thick,dashed,rounded corners] (-.2, -.2) rectangle (2.2, .2);
\draw[draw=green,thick,dashed,rounded corners] (2.8, -.2) rectangle (4.2, .2);
\draw[draw=green,thick,dashed,rounded corners] (4.8, -.2) rectangle (5.2, .2);
\draw[draw=green,thick,dashed,rounded corners] (5.8, -.2) rectangle (8.2, .2);
\draw[draw=green,thick,dashed,rounded corners] (8.8, -.2) rectangle (10.2, .2);
\draw[draw=green,thick,dashed,rounded corners] (10.8, -.2) rectangle (11.2, .2);

\foreach \P in {P1, P2, P3, P4, P5, P6, P7, P8, P9,P10,P11,P12}
 {\fill (\P) circle (2pt);}

\draw[-,bend left=60] (P1) to (P6);
\draw[-, bend left=90] (P4) to (P7);
\draw[-, bend left=90] (P5) to (P8);
\draw[-, bend left=90] (P10) to (P11);

\node[below=5pt] at (P1) {$1$};
\node[below=5pt] at (P2) {$2$};
\node[below=5pt] at (P3) {$3$};
\node[below=5pt] at (P4) {$4$};
\node[below=5pt] at (P5) {$5$};
\node[below=5pt] at (P6) {$6$};
\node[below=5pt] at (P7) {$7$};
\node[below=5pt] at (P8) {$8$};
\node[below=5pt] at (P9) {$9$};
\node[below=5pt] at (P10) {$10$};
\node[below=5pt] at (P11) {$11$};
\node[below=5pt] at (P12) {$12$};

\coordinate (U1) at (1, -2);
\coordinate (U2) at (5, -2);
\coordinate (U3) at (3.5, -3);
\coordinate (U4) at (7, -3);
\coordinate (U5) at (9.5, -2);
\coordinate (U6) at (11, -3);

\coordinate (UT) at (3.5, -3);
\node[left=120pt] at (UT) {$G$:};

\draw[-] (U1) to (U2);
\draw[-,red,thick,dashed] (U1) to[in=50,out=130, distance=1.5cm] (U1);
\draw[-, bend left=5] (U3) to (U4);
\draw[-, bend left=5] (U4) to (U3);
\draw[-] (U5) to[in=50,out=130, distance=1.5cm] (U5);

\foreach \P in {U1, U2, U3, U4, U5, U6}
 {\fill [fill=green, draw=black, line width=0.5pt] (\P) circle (2.5pt);}

 \node[below=5pt] at (U1) {$u_1(3)$};
\node[below=5pt] at (U2) {$u_3(1)$};
\node[below=5pt] at (U3) {$u_2(2)$};
\node[below=5pt] at (U4) {$u_4(3)$};
\node[below=5pt] at (U5) {$u_5(2)$};
\node[below=5pt] at (U6) {$u_6(1)$};
\end{tikzpicture}
\caption{Matching $M$ and graph $G$ for $W=\underline{\textcolor{red}{\mathtt1}}\;\mathtt1\mathtt1\;\underline{\textcolor{red}{\mathtt0}}\underline{\textcolor{red}{\mathtt0}}\;\overline{\textcolor{blue}{\mathtt1}}
\overline{\textcolor{blue}{\mathtt0}}\overline{\textcolor{blue}{\mathtt0}}\;\mathtt0\;
\underline{\textcolor{red}{\mathtt1}}\;\overline{\textcolor{blue}{\mathtt1}}\;\mathtt0$.}\label{fig6}
\end{figure}

As $W$ is even, we might wonder if it is a shuffle square, that is, if it possesses a pair of twins of length 6.
This can be quickly answered in negative, using the~graph representation as  a~tool. If $W$ were a shuffle square, there would exist an ordered graph $G$ satisfying conditions 1-3 of Proposition~\ref{Proposition Characterization} on vertex set $\{u_1,\dots,u_6\}$ with  $N_1=\{u_1,u_3,u_5\}$ and $N_2=\{u_2,u_4,u_6\}$. However, using Fig. \ref{fig6} again (but ignoring the edges) we see that such a~graph does not exist. Indeed, vertex $u_1$ cannot be connected to $u_5$ as then all edges at the bottom part would need to be adjacent to $u_6$ which is impossible. So, there must be a loop at $u_5$  which cuts off vertex $u_6$ at the bottom.
\end{example}
On a positive side, consider the following example.
\begin{example}\label{small2}\rm
Is  $$W=\mathtt{110}^5\mathtt{10}^{6}\mathtt{1110}^{4}\mathtt{110}^7$$ a shuffle square? One may try a tedious trial-and-error approach, but instead we draw a graph $G$ satisfying conditions 1-3 of Proposition~\ref{Proposition Characterization}, see Fig. \ref{fig7}. This confirms that $W$ is a shuffle square. Moreover, we may construct a pair of perfect twins in $W$ based on $G$. It is $$W=\underline{\textcolor{red}{\mathtt1}}\;\overline{\textcolor{blue}{\mathtt1}}\;\underline{\textcolor{red}{\mathtt0^5}}\underline{\textcolor{red}{\mathtt1}}
\underline{\textcolor{red}{\mathtt0}}\;\overline{\textcolor{blue}{\mathtt0^5}}\;\underline{\textcolor{red}{\mathtt1^2}}\;
\overline{\textcolor{blue}{\mathtt1}}\;\underline{\textcolor{red}{\mathtt0^3}}\;\overline{\textcolor{blue}{\mathtt01^2}}\;
\underline{\textcolor{red}{\mathtt0^2}}\;\overline{\textcolor{blue}{\mathtt0^5}}.$$

\begin{figure}[h]
\centering
\begin{tikzpicture}
\coordinate (U1) at (0,1.5);
\coordinate (U2) at (1,0);
\coordinate (U3) at (2,1.5);
\coordinate (U4) at (3,0);
\coordinate (U5) at (4,1.5);
\coordinate (U6) at (5,0);
\coordinate (U7) at (6,1.5);
\coordinate (U8) at (7,0);

\draw[-] (U1) to[in=50,out=130, distance=1.5cm] node[red,pos=0.85,right]{$1$} (U1);
\draw[-] (U3) to node[red,midway,above]{$1$} (U5);
\draw[-] (U5) to node[red,midway,above]{$2$} (U7);
\draw[-] (U2) to node[red,midway,above]{$5$} (U4);
\draw[-] (U4) to node[red,midway,above]{$1$} (U6);
\draw[-] (U6) to node[red,midway,above]{$3$} (U8);
\draw[-] (U8) to[in=50,out=130, distance=1.5cm] node[red,pos=0.85,right]{$2$} (U8);

\foreach \P in {U1, U2, U3, U4, U5, U6,U7,U8}
 {\fill [line width=0.5pt] (\P) circle (2.5pt);}

\node[below=5pt] at (U1) {$u_1(2)$};
\node[below=5pt] at (U2) {$u_2(5)$};
\node[below=5pt] at (U3) {$u_3(1)$};
\node[below=5pt] at (U4) {$u_4(6)$};
\node[below=5pt] at (U5) {$u_5(3)$};
\node[below=5pt] at (U6) {$u_6(4)$};
\node[below=5pt] at (U7) {$u_7(2)$};
\node[below=5pt] at (U8) {$u_8(7)$};
\end{tikzpicture}
\caption{The word
$W=\mathtt1\mathtt1\mathtt0^5\mathtt1\mathtt0\mathtt0^5\mathtt1^2\mathtt1\mathtt0^3\mathtt0\mathtt1^2\mathtt0^2\mathtt0^5$ is a~shuffle square.}\label{fig7}
\end{figure}

\end{example}

\section{Three easy pieces}\label{2pieces}

In this section we prove three relatively simple results on shuffle squares which demonstrate  the elegance and simplicity of the graphic representation.
An even word is \emph{dull} if all its runs have even length. It is called dull, because it is trivially a shuffle square with the perfect twins just taking a half of each run.

\subsection{Words with up to five runs}

Note that no even binary word with the total of \emph{three} runs is a shuffle square unless it is dull. Indeed, the only way to construct a nest-free graph $G$ with vertices $u_1<u_2<u_3$ of degrees $|U_1|,|U_2|,|U_3|$, respectively, and with no edges between $\{u_1,u_3\}$ and $u_2$, is to put $|U_i|/2$  loops at each vertex $u_i$, $i=1,2,3$, as otherwise nests would be created (see Fig. \ref{fig8}).

\begin{figure}[h]
\centering
\begin{tikzpicture}
\coordinate (U1) at (0,1.25);
\coordinate (U2) at (1.5,0);
\coordinate (U3) at (3,1.25);

\draw[-] (U1) to[in=50,out=130, distance=1.5cm] node[red,pos=0.85,right]{$\frac{a}2$} (U1);
\draw[-] (U2) to[in=50,out=130, distance=1.5cm] node[red,pos=0.85,right]{$\frac{b}2$} (U2);
\draw[-] (U3) to[in=50,out=130, distance=1.5cm] node[red,pos=0.85,right]{$\frac{c}2$} (U3);

\foreach \P in {U1, U2, U3}
 {\fill [line width=0.5pt] (\P) circle (2.5pt);}

\node[below=5pt] at (U1) {$u_1(a)$};
\node[below=5pt] at (U2) {$u_2(b)$};
\node[below=5pt] at (U3) {$u_3(c)$};
\end{tikzpicture}
\caption{The graph $G$ of a dull shuffle square with three runs.}\label{fig8}
\end{figure}.

We now settle the case of \emph{four} runs. The result has been already proved in~\cite{Gry2024}, though by a different method.

\begin{proposition}\label{abcd}
Let $a,b,c,d$ be positive integers, not all even, but with $a+c$ and $b+d$ both even. Then the~(even but not dull) word $W=\mathtt1^a\mathtt0^b\mathtt1^c\mathtt0^d$ is a shuffle square if and only if $a\geqslant c$ and $b\leqslant d$.
\end{proposition}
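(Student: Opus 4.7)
The plan is to apply Proposition~\ref{Proposition Characterization} directly. Any candidate graph $G$ on $u_1<u_2<u_3<u_4$ (with capacities $a,b,c,d$) has edges only inside $N_\mathtt{1}=\{u_1,u_3\}$ and $N_\mathtt{0}=\{u_2,u_4\}$, so it is fully described by six nonnegative integers: loop counts $\ell_1,\ell_2,\ell_3,\ell_4$ and the numbers $e_1,e_2$ of parallel edges $u_1u_3$ and $u_2u_4$. The degree conditions become
$$2\ell_1+e_1=a,\qquad 2\ell_2+e_2=b,\qquad 2\ell_3+e_1=c,\qquad 2\ell_4+e_2=d.$$
(The hypothesis that $a+c$ and $b+d$ are even is exactly the parity requirement for this system to be solvable in integers.)

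The first substantive step is a finite nest check. Examining each pair of edge types, I would verify that the only configurations producing a nest are: (i) an edge $u_1u_3$ together with a loop at $u_2$, and (ii) an edge $u_2u_4$ together with a loop at $u_3$. Parallel edges share endpoints and cannot nest by the strict inequalities; the two ``long'' edges $u_1u_3$ and $u_2u_4$ interleave rather than nest; loops at $u_1$ and $u_4$ lie at the extreme positions and can never be strictly contained in any edge; loops at $u_2,u_3$ do not nest with each other. Thus nest-freeness collapses to the clean pair of constraints $e_1\ell_2=0$ and $e_2\ell_3=0$.

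For necessity, assume such a $G$ exists. Since $W$ is not dull, at least one $a_i$ is odd, and the parity hypothesis forces either $a,c$ both odd or $b,d$ both odd. If $a,c$ are odd then $e_1\equiv a\pmod 2$ is odd, so $e_1\ge 1$; by the nest constraint $\ell_2=0$, hence $e_2=b\ge 1$ (since $b\ge 1$), hence $\ell_3=0$, hence $e_1=c$ and $e_2=b$. Nonnegativity of $\ell_1,\ell_4$ gives $a\ge c$ and $b\le d$. The case $b,d$ odd is symmetric: $e_2\ge 1$ forces $\ell_3=0$, hence $e_1=c\ge 1$, hence $\ell_2=0$, hence $e_2=b$, yielding the same inequalities. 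For sufficiency, given $a\ge c$ and $b\le d$, I would exhibit the graph with
$$e_1=c,\quad e_2=b,\quad \ell_1=\tfrac{a-c}{2},\quad \ell_4=\tfrac{d-b}{2},\quad \ell_2=\ell_3=0;$$
the degree equations are satisfied, all counts are nonnegative integers by the hypotheses, and nest-freeness holds trivially because $\ell_2=\ell_3=0$.

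The only place requiring care is the nest enumeration in the first step; once that is pinned down, everything else follows from solving a $4\times 4$ linear system with nonnegativity constraints. The small detail to watch in the necessity argument is bootstrapping from ``one of $e_1,e_2$ is positive'' to ``both are positive'', which uses that every run length $a,b,c,d$ is at least $1$.
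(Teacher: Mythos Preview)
Your proof is correct and follows essentially the same approach as the paper: apply Proposition~\ref{Proposition Characterization}, reduce to the six-parameter system with the two nest constraints $e_1\ell_2=0$ and $e_2\ell_3=0$, and read off the inequalities. Your write-up is in fact a bit more careful than the paper's in making the bootstrapping step explicit (from ``one of $e_1,e_2$ is positive'' to ``both are''), which the paper compresses into a single sentence.
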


\begin{proof}
Consider a nest-free ordered graph on vertex set $u_1<u_2<u_3<u_4$ with capacities $a,b,c,d$, respectively, and edges are permitted only between $u_1$ and $u_3$, and between $u_2$~and~$u_4$; loops are allowed at any vertex.
If there is a shuffle square in $W$, then both edges, $\{u_1,u_3\}$ and $\{u_2,u_4\}$, must be present (with some multiplicities), as loops can only accommodate even degrees. But then there cannot be any loops either at $u_2$ or at $u_3$, implying that $a\geqslant c$ and $b\leqslant d$ (see Fig. \ref{fig9}).

\begin{figure}[h]
\centering
\begin{tikzpicture}
\coordinate (U1) at (0,1.5);
\coordinate (U2) at (2,0);
\coordinate (U3) at (4,1.5);
\coordinate (U4) at (6,0);

\draw[-] (U1) to[in=50,out=130, distance=1.5cm] node[red,pos=0.85,right]{$\frac{a-c}2$} (U1);
\draw[-] (U1) to node[red,midway,above]{$c$} (U3);
\draw[-] (U2) to node[red,midway,above]{$b$} (U4);
\draw[-] (U4) to[in=50,out=130, distance=1.5cm] node[red,pos=0.85,right]{$\frac{d-b}2$} (U4);

\foreach \P in {U1, U2, U3, U4}
 {\fill [line width=0.5pt] (\P) circle (2.5pt);}

\node[below=5pt] at (U1) {$u_1(a)$};
\node[below=5pt] at (U2) {$u_2(b)$};
\node[below=5pt] at (U3) {$u_3(c)$};
\node[below=5pt] at (U4) {$u_4(d)$};
\end{tikzpicture}
\caption{Graph $G$ for the shuffle square $\mathtt1^a\mathtt0^b\mathtt1^c\mathtt0^d$.}\label{fig9}
\end{figure}

On the other hand, if $a\geqslant c$ and $b\leqslant d$, then we draw edge $u_1u_3$ with multiplicity $c$, edge $u_2u_4$ with multiplicity $b$ and add $(a-c)/2$ loops at $u_1$ and $(d-b)/2$ loops at $u_4$ (see Fig. \ref{fig9}). The obtained ordered graph is nest-free and thus corresponds to a pair of perfect twins in $W$ of the~form
$$\underline{\textcolor{red}{\mathtt1^{(a+c)/2}}}\;\overline{\textcolor{blue}{\mathtt1^{(a-c)/2}}}\;\underline{\textcolor{red}{\mathtt0^b}}\;
\overline{\textcolor{blue}{\mathtt1^c}}
\;\underline{\textcolor{red}{\mathtt0^{(d-b)/2}}}\;\overline{\textcolor{blue}{\mathtt0^{(d+b)/2}}}.$$
Note that both twins form the word $\mathtt1^{(a+c)/2}\mathtt0^{(b+d)/2}$.
\end{proof}

\begin{remark}\label{cyclic}\rm
As a corollary of Proposition~\ref{abcd} we infer that every even word $\mathtt1^a\mathtt0^b\mathtt1^c\mathtt0^d$ with all $a,b,c,d$ positive, can be rotated (i.e., cyclically permuted), without breaking any of the four runs, so that the outcome is a shuffle square. Indeed, among the four possible rotations, $$abcd,\ bcda,\ cdab,\ dabc,$$
there is always one with the first element greater or equal to the third and the second element smaller or equal to the fourth.
So, at least one of them will satisfy the conditions in Proposition~\ref{abcd} (with $a,b,c,d$ renamed and $\mathtt0$ and $\mathtt1$ swapped, if needed). For instance,
$\mathtt1^7\mathtt0^6\mathtt1^9\mathtt0^8$ is not a shuffle square, but $\mathtt0^8\mathtt1^7\mathtt0^6\mathtt1^9$ is (as $8\geqslant 6$ and $7\leqslant 9$).
\end{remark}

A similar, though a bit more involved analysis can be carried out for words with \emph{five} runs, leading to the following characterization.

\begin{proposition}\label{abcde}
Let $a,b,c,d,e$ be positive integers, not all even, but with $a+c+e$ and $b+d$ both even. Then the word $W=\mathtt1^a\mathtt0^b\mathtt1^c\mathtt0^d\mathtt1^e$ is a shuffle square if and only if
\begin{itemize}
\item[(i)] $b=d$ and $c\leqslant a+e$, or
\item[(ii)] $b\leqslant d$ and $a\geqslant c$ and $e$ is even, or
\item[(iii)] $b\geqslant d$ and $c\leqslant e$ and $a$ is even.
\end{itemize}
\end{proposition}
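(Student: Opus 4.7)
The approach is to apply the graphic characterization of Proposition~\ref{Proposition Characterization}: $W$ is a shuffle square if and only if there is a nest-free ordered graph $G$ on $u_1,\ldots,u_5$ with prescribed degrees $a,b,c,d,e$, whose edges lie inside $\{u_1,u_3,u_5\}$ or inside $\{u_2,u_4\}$ (loops allowed at every vertex). I introduce multiplicity variables
\[
p=|u_1u_3|,\ q=|u_3u_5|,\ r=|u_1u_5|,\ x=|u_2u_4|,
\]
and $\ell_1,\ell_3,\ell_5,y,z$ for the loops at $u_1,u_3,u_5,u_2,u_4$. The degree conditions become
\[
a=p+r+2\ell_1,\ c=p+q+2\ell_3,\ e=q+r+2\ell_5,\ b=x+2y,\ d=x+2z.
\]

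The first substantive step is to catalog all nesting pairs by inspecting the intervals $(\min,\max)$ of the candidate edges. A short case check produces exactly seven forbidden pairs: the long edge $u_1u_5$ nests with each of $u_2u_4$ and the loops at $u_2$, $u_3$, $u_4$; together with three short obstructions $\{u_1u_3,\text{ loop at }u_2\}$, $\{u_3u_5,\text{ loop at }u_4\}$, $\{u_2u_4,\text{ loop at }u_3\}$. Since $b,d\geqslant 1$ forces some edge or loop among $\{u_2,u_4\}$, having $r\geqslant 1$ would always produce a nest, so $r=0$. Nest-freeness then collapses to the three product constraints $py=qz=x\ell_3=0$.

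For necessity, assume $W$ is a shuffle square and split on $x$. If $x=0$, then $b=2y$ and $d=2z$ with $y,z\geqslant 1$, while $py=qz=0$ forces $p=q=0$ and hence $a,c,e$ all even, contradicting the hypothesis ``not all even''. So $x\geqslant 1$, giving $\ell_3=0$ and $c=p+q$. Splitting on $(y,z)$: the case $y,z>0$ forces $p=q=0$ and so $c=0$, impossible; $y=z=0$ yields $b=d$ and $c=p+q\leqslant a+e$, which is (i); $y>0,z=0$ forces $p=0$, whence $a=2\ell_1$ is even, $c=q\leqslant e$, and $b>d$, which is (iii); and $y=0,z>0$ symmetrically gives (ii).

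For sufficiency I construct $G$ explicitly in each case. In (ii), the graph with edge $u_1u_3$ of multiplicity $c$, edge $u_2u_4$ of multiplicity $b$, and loops $\ell_1=(a-c)/2$, $\ell_5=e/2$, $z=(d-b)/2$ meets all degree equations (parities work out because $e$ is even, $a+c+e$ is even, and $b+d$ is even) and avoids every forbidden pair. Case (iii) is symmetric. For (i), set $x=b=d$, $\ell_3=y=z=0$, and choose a split $c=p+q$ with $p\leqslant a$, $q\leqslant e$, and $p\equiv a\pmod 2$ (which, modulo the global parity $a+c+e\equiv 0\pmod 2$, also forces $q\equiv e\pmod 2$). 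Existence of such a $p$ in $[\max(0,c-e),\min(a,c)]$ is the only arithmetic point worth checking: either this interval has length at least $1$ and thus meets both parity classes, or it collapses to the single point $p=a$, which happens exactly when $c=a+e$ and automatically has the correct parity. This small parity selection is the only step I expect to require any real care; everything else is straightforward bookkeeping.
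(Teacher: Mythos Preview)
Your proof is correct and follows essentially the same approach as the paper: both use the graphic characterization, rule out the long edge $u_1u_5$ and the loop at $u_3$, and then split into cases according to which of the short edges $u_1u_3$, $u_3u_5$ are present (equivalently, your case split on $x$ and then on $(y,z)$). Your version is somewhat more systematic---you introduce explicit multiplicity variables and reduce nest-freeness to the clean product conditions $py=qz=x\ell_3=0$---and you spell out the parity selection of $p$ in case~(i) more carefully than the paper, which simply asserts that ``such a choice of $c_1$ and $c_2$ is possible owing to the assumption that $a+c+e$ is even.''
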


\begin{proof}
 Suppose $W$ is a shuffle square. Then there is an ordered nest-free graph on vertices $u_1<u_2<u_3<u_4<u_5$ with edges only within $u_1,u_2,u_3$ and within $u_2,u_4$, and with degrees $a,b,c,d,e$, respectively. If there was a loop at $u_3$, then, consequently, there would be loops at all five vertices and no other edges, which contradicts the assumption that $W$ is dull.

Assume first that there are both, edge $\{u_1,u_3\}$ with multiplicity $c_1$ and $\{u_3,u_5\}$ with multiplicity $c_2$, $c=c_1+c_2$. Then there cannot be loops at $u_2$ or $u_4$ which implies that $b=d$. Moreover, we must have $a\geqslant c_1$ and $e\geqslant c_2$ and both differences $a-c_1$ and $e-c_2$ even. (Such a choice of $c_1$ and $c_2$ is possible owing to the assumption that $a+c+e$ is even.) Thus, we have arrived at case (i).

Next, assume that edge $\{u_1,u_3\}$ has multiplicity $c$ (and thus no edge $\{u_3,u_5\}$). This implies case (ii), as there is no loop at $u_2$ either. The last case is symmetric and leads to scenario (iii) (see Fig. \ref{fig10}).

On the other hand, if either of conditions (i), (ii), or (iii) holds, then one can easily create the desired graph  (see again Fig. \ref{fig10}).
\end{proof}

\begin{figure}[h]
\centering
\begin{tikzpicture}[scale=1]
\coordinate (U1) at (-2,8);
\coordinate (U2) at (-1,6.5);
\coordinate (U3) at (0,8);
\coordinate (U4) at (1,6.5);
\coordinate (U5) at (2,8);

\node at (-4,7.25) {\large{{\bf (i)}}};

\draw[-] (U1) to[in=50,out=130, distance=1.5cm] node[red,pos=0.15,left]{$\frac{a-c_1}2$} (U1);
\draw[-] (U1) to node[red,midway,above]{$c_1$} (U3);
\draw[-] (U2) to (U4);
\draw[-] (U3) to node[red,midway,above]{$c_2$} (U5);
\draw[-] (U5) to[in=50,out=130, distance=1.5cm] node[red,pos=0.85,right]{$\frac{e-c_2}2$} (U5);

\foreach \P in {U1, U2, U3, U4,U5}
 {\fill [line width=0.5pt] (\P) circle (2.5pt);}

\node[below=5pt] at (U1) {$u_1(a)$};
\node[below=5pt] at (U2) {$u_2(b)$};
\node[below=5pt] at (U3) {$u_3(c)$};
\node[below=5pt] at (U4) {$u_4(d)$};
\node[below=5pt] at (U5) {$u_5(e)$};

\node[red] at (0,5) {$c=c_1+c_2,\ \ c_1\leqslant a,\ \ c_2\leqslant e,\ \ a-c_1\equiv e-c_2\equiv0\,(\mbox{mod}\,2)$};

\coordinate (A1) at (-6,3.5);
\coordinate (A2) at (-5,2);
\coordinate (A3) at (-4,3.5);
\coordinate (A4) at (-3,2);
\coordinate (A5) at (-2,3.5);

\node at (-8,3) {\large{{\bf (ii)}}};

\draw[-] (A1) to[in=50,out=130, distance=1.5cm] node[red,pos=0.15,left]{$\frac{a-c}2$} (A1);
\draw[-] (A1) to node[red,midway,above]{$c$} (A3);
\draw[-] (A2) to node[red,midway,above]{$b$} (A4);
\draw[-] (A4) to[in=50,out=130, distance=1.5cm] node[red,pos=0.85,right]{$\frac{d-b}2$} (A4);
\draw[-] (A5) to[in=50,out=130, distance=1.5cm] node[red,pos=0.85,right]{$\frac{e}2$} (A5);

\coordinate (B1) at (2,3.5);
\coordinate (B2) at (3,2);
\coordinate (B3) at (4,3.5);
\coordinate (B4) at (5,2);
\coordinate (B5) at (6,3.5);

\node at (0,3) {\large{{\bf (iii)}}};

\draw[-] (B1) to[in=50,out=130, distance=1.5cm] node[red,pos=0.15,left]{$\frac{a}2$} (B1);
\draw[-] (B3) to node[red,midway,above]{$c$} (B5);
\draw[-] (B2) to node[red,midway,above]{$d$} (B4);
\draw[-] (B2) to[in=50,out=130, distance=1.5cm] node[red,pos=0.15,left]{$\frac{b-d}2$} (B2);
\draw[-] (B5) to[in=50,out=130, distance=1.5cm] node[red,pos=0.85,right]{$\frac{e-c}2$} (B5);

\foreach \P in {A1,A2,A3,A4,A5,B1,B2,B3,B4,B5}
 {\fill [line width=0.5pt] (\P) circle (2.5pt);}

\end{tikzpicture}
\caption{Variants of the graph $G$ of the shuffle square $\mathtt1^a\mathtt0^b\mathtt1^c\mathtt0^d\mathtt1^e$.}\label{fig10}
\end{figure}

\subsection{Words with separated ones}

In this section we consider even binary words with all $\mathtt1$'s separated from each other, that is, with all $\mathtt1$-runs of length 1, but with no restrictions on the $\mathtt0$'s (but see Remark~\ref{zero}).
That is, we are looking at even words of the form
\begin{equation}\label{word}
W=\mathtt{0}^{a_0}\mathtt{10}^{a_1}\mathtt{10}^{a_2}\cdots\mathtt{10}^{a_{2m}},
\end{equation}
for integers $m,a_1,\dots,a_{2m-1}\geqslant1$ and $a_0,a_{2m}\geqslant0$.
Our goal is to define conditions for the~values of  $a_i$ that make $W$ a shuffle square with the  $\mathtt1$'s \emph{belonging alternately to  perfect twins}.

For $m=1$, such conditions are given by Propositions~\ref{abcd} and~\ref{abcde} (with $\mathtt{0}$'s and $\mathtt{1}$'s swapped, if necessary). In particular, quite trivially, when $a_0=0$, the condition becomes $a_1\leqslant a_2$, when $a_2=0$ it becomes $a_0\geqslant a_1$, while if both $a_0=a_2=0$, $W$ is clearly not a shuffle square. But how about larger $m$?

This could be answered without referring to the ordered graphs, but we prefer the ``graphic'' approach as it seems to be more insightful.
Any graph representing perfect twins in word $W$, $m\geqslant2$, must attain a structure presented in Figure \ref{fig11}, with the additional constraint that for each $i=1,3,\dots,2m-3$, either $y_i=0$ or $z_{i+1}=0$. The~variables $x_i$, $i=0,\dots,2m-1$, $y_i$, $i=1,3,\dots, 2m-3$, and $z_i$, $i=0,2,,\dots, 2m$, represent multiplicities of the respective edges, so we have altogether $4m$ unknowns to find. The~top vertices have all capacities 1 and the bottom vertices, $v_0,v_1,\dots,v_{2m}$, have capacities $a_0,a_1,\dots,a_{2m}$. (In Fig.~\ref{fig11} the capacities are suppressed.)

\begin{figure}[h]
\centering
\begin{tikzpicture}[scale=.95]
\pgfresetboundingbox
  \path[use as bounding box] (-0.3,-2.2) rectangle (16.45,2.2);
\coordinate(V0) at (0,0);
\coordinate (U1) at (0.85,1.75);
\coordinate(V1) at (1.7,0);
\coordinate (U2) at (2.55,1.75);
\coordinate(V2) at (3.4,0);
\coordinate (U3) at (4.25,1.75);
\coordinate(V3) at (5.1,0);
\coordinate (U4) at (5.95,1.75);
\coordinate(V4) at (6.8,0);
\coordinate (U5) at (7.65,1.75);
\coordinate(V5) at (8.5,0);
\coordinate (U6) at (9.35,1.75);
\coordinate(V6) at (10.2,0);
\coordinate(VX) at (11.9,0);

\node at (11.475,0) {$\cdots$};
\node at (11.475,1.75) {$\cdots$};

\coordinate(VY) at (11.05,0);
\coordinate(V7) at (12.75,0);
\coordinate (U8) at (13.6,1.75);
\coordinate(V8) at (14.45,0);
\coordinate (U9) at (15.3,1.75);
\coordinate(V9) at (16.15,0);

\node[red] at (8.5,-1.75) {$ r_i=\deg^{(\rightarrow)}(v_i),\ \ \ \ \ell_i=\deg^{(\leftarrow)}(v_i),\ \ \ \ r_i+r_{i+1}=\ell_{i+1}+\ell_{i+2}$};

\draw[-] (U1) to node[red,midway,above]{$1$} (U2);
\draw[-] (U3) to node[red,midway,above]{$1$} (U4);
\draw[-] (U5) to node[red,midway,above]{$1$} (U6);
\draw[-] (U8) to node[red,midway,above]{$1$} (U9);

\draw[-] (V0) to[in=50,out=130, distance=1.5cm] node[red,midway,below]{$z_0$} (V0);
\draw[-] (V0) to node[red,midway,above=-3pt]{$x_0$} (V1);

\draw[-, bend left=90] (V1) to node[red,midway,above]{$y_1$} (V3);
\draw[-] (V1) to node[red,midway,above=-3pt]{$x_1$} (V2);
\draw[-] (V2) to[in=50,out=130, distance=1.5cm] node[red,midway,below]{$z_2$} (V2);
\draw[-] (V2) to node[red,midway,above=-3pt]{$x_2$} (V3);

\draw[-, bend left=90] (V3) to node[red,midway,above]{$y_3$} (V5);
\draw[-] (V3) to node[red,midway,above=-3pt]{$x_3$} (V4);
\draw[-] (V4) to[in=50,out=130, distance=1.5cm] node[red,midway,below]{$z_4$} (V4);
\draw[-] (V4) to node[red,midway,above=-3pt]{$x_4$} (V5);

\begin{scope}
\clip(8.5,0) rectangle (10.7,2);
\draw[-, bend left=90] (V5) to node[red,midway,above]{$y_5$} (VX);
\draw[-] (V6) to (VX);
\end{scope}
\draw[-] (V5) to node[red,midway,above=-3pt]{$x_5$} (V6);
\draw[-] (V6) to[in=50,out=130, distance=1.5cm] node[red,midway,below]{$z_6$} (V6);

\begin{scope}
\clip(12.25,0) rectangle (14.45,2);
\draw[-, bend left=90] (VY) to node[red,midway,above]{$y_{2m-3}$} (V8);
\draw[-] (VY) to (V7);
\end{scope}
\draw[-] (V7) to node[red,midway,above=-3pt]{$x_{2m-2}$} (V8);
\draw[-] (V7) to[in=50,out=130, distance=1.5cm] node[red,midway,below]{$z_{2m-2}$} (V7);
\draw[-] (V8) to node[red,midway,above=-3pt]{$x_{2m-1}$} (V9);
\draw[-] (V9) to[in=50,out=130, distance=1.5cm] node[red,midway,below]{$z_{2m}$} (V9);

\foreach \P in {U1,U2,U3,U4,U5,U6,U8,U9, V0,V1,V2,V3,V4,V5,V6,V7,V8,V9}
 {\fill [line width=0.5pt] (\P) circle (2.5pt);}

\node[below=5pt] at (V0) {$v_0$};
\node[below=5pt] at (V1) {$v_1$};
\node[below=5pt] at (V2) {$v_2$};
\node[below=5pt] at (V3) {$v_3$};
\node[below=5pt] at (V4) {$v_4$};
\node[below=5pt] at (V5) {$v_5$};
\node[below=5pt] at (V6) {$v_6$};
\node[below=5pt] at (V9) {$v_{2m}$};
\end{tikzpicture}
\caption{The structure of the graph representing perfect twins in the shuffle square $\mathtt{0}^{a_0}\mathtt{10}^{a_1}\mathtt{10}^{a_2}\cdots\mathtt{10}^{a_{2m}}$, $m\geqslant2$.}\label{fig11}
\end{figure}

One way to proceed from this would be to set up a system of $2m+1$ equations guaranteeing that for each $i=0,1,\dots,2m$, the degree of vertex $v_i$  equals $a_i$, supplemented by $m-1$ equations of the form $y_{2i-1}z_{2i}=0$, $i=1,2,\dots,m-1$. However, it seems simpler to apply more coarse setting in terms of the right degrees $r_i:=\deg_G^{(\rightarrow)}(v_i)$ and left degrees $\ell_i:=\deg_G^{(\leftarrow)}(v_i)$ and utilize the observation that $$r_0=\ell_0+\ell_1,\ \ \ \ \ r_{2m-1}+r_{2m}=\ell_{2m},\ \ \ \ \ \mbox{ and }\ \ \ \ \ r_{2i-1}+r_{2i}=\ell_{2i}+\ell_{2i+1},$$ for each $i=1,2,\dots, m-1$.

Thus, after getting rid of the $\ell_i$'s (as $r_i+\ell_i=a_i$ for all $i$), and solving for the $r_{2i}$'s,  we arrive at the following system of $m$ equations and just $2m-1$ variables.




\begin{equation}\label{syseq}\left\{\begin{array}{ll}
r_0=\frac12(a_0+a_1-r_1)&\\
r_{2i}=\frac12(a_{2i}+a_{2i+1}-r_{2i-1}-r_{2i+1}),&1\leqslant i\leqslant m-1\\
r_{2m}=\frac12(a_{2m}-r_{2m-1})
\end{array}\right.
\end{equation}
 As for all $i$ we have the natural constraints $0\leqslant r_i\leqslant a_i$, we thus have proved the following result. 

\begin{proposition}\label{only1}
An even binary word given by~\eqref{word}, with $m\geqslant2$, is a shuffle square with alternating $\mathtt1$'s if and only if the systems of equations~\eqref{syseq} admits  integer solutions with $0\leqslant r_i\leqslant a_i$ for all $i$. Then $W$ has perfect twins of the form $$\mathtt0^{r_0}\mathtt1\mathtt0^{r_1+r_2}\mathtt1\mathtt0^{r_3+r_4}\cdots \mathtt1\mathtt0^{r_{2m-1}+r_{2m}}=\mathtt0^{\ell_0+\ell_1}\mathtt1\mathtt0^{\ell_2+\ell_3}\mathtt1\mathtt0^{\ell_4+\ell_5}\cdots \mathtt1\mathtt0^{\ell_{2m}}.$$

\end{proposition}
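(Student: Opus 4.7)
The approach I would take is to invoke Proposition~\ref{Proposition Characterization} and analyze the structure of a nest-free ordered graph $G$ on the run-vertices $\{u_1,\dots,u_{2m}\} \cup \{v_0,\dots,v_{2m}\}$ representing perfect twins in $W$. Since each $\mathtt{1}$-run has length $1$, every top vertex $u_j$ has capacity $1$ and hence lies on a single edge (loops are excluded as they would force degree $2$). The alternation hypothesis then forces these top edges to be exactly $u_{2j-1}u_{2j}$ for $j=1,\dots,m$.

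I would then classify the admissible edges among the $v_i$'s with respect to the fixed top edges. The nest-free condition rules out: (a) any loop at an odd-indexed $v_{2j-1}$, which would nest inside $u_{2j-1}u_{2j}$; (b) any edge $v_iv_j$ with $|i-j|\geqslant 3$, which encloses at least one full top edge; and (c) any edge $v_{2i}v_{2i+2}$, which encloses $u_{2i+1}u_{2i+2}$. What remains are exactly the loops $z_{2i}$ at even-indexed $v$'s, the adjacencies $x_i=v_iv_{i+1}$, and the skip edges $y_{2i-1}=v_{2i-1}v_{2i+1}$ of Figure~\ref{fig11}; an additional constraint $y_{2i-1}\,z_{2i}=0$ for $i\in[m-1]$ is forced because a skip edge would otherwise nest an intermediate loop.

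The degree constraints $\deg_G(v_i)=a_i$ can then be reformulated in terms of $r_i=\deg_G^{(\rightarrow)}(v_i)$ and $\ell_i=a_i-r_i$. Counting edges crossing each vertical cut between $v_{2i}$ and $v_{2i+1}$ (where the top edges balance) gives the flow identities $r_0=\ell_0+\ell_1$, $r_{2i-1}+r_{2i}=\ell_{2i}+\ell_{2i+1}$ for $1\leqslant i\leqslant m-1$, and $r_{2m-1}+r_{2m}=\ell_{2m}$. Substituting $\ell_i=a_i-r_i$ and solving for the even-indexed $r_{2i}$ yields precisely \eqref{syseq}, together with the natural bounds $0\leqslant r_i\leqslant a_i$.

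Conversely, given integers $r_0,\dots,r_{2m}$ satisfying \eqref{syseq} and the bounds, I would reconstruct $G$ greedily: set $z_0=\ell_0$, $x_0=\ell_1$, and then recursively, for $i=1,\dots,m-1$, take $x_{2i-1}=\min(r_{2i-1},\ell_{2i})$, $y_{2i-1}=r_{2i-1}-x_{2i-1}$, $z_{2i}=\ell_{2i}-x_{2i-1}$, and $x_{2i}=r_{2i}-z_{2i}$; finally set $x_{2m-1}=r_{2m-1}$ and $z_{2m}=r_{2m}$. The equations \eqref{syseq} ensure these are non-negative integers, and the disjunction $y_{2i-1}\,z_{2i}=0$ holds automatically: whenever $y_{2i-1}>0$ one has $x_{2i-1}=\ell_{2i}$, which forces $z_{2i}=0$. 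Reading off the perfect twins from $(r_i,\ell_i)$ via Proposition~\ref{Proposition Characterization} then produces the displayed form. I expect the chief obstacle to be the careful enumeration of forbidden edges in the second paragraph and the verification in the fourth that the greedy reconstruction yields non-negative integer multiplicities respecting $y_{2i-1}\,z_{2i}=0$; neither step is deep, but each demands patient case analysis against the nest-free condition.
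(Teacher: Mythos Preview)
Your approach is essentially the same as the paper's: both invoke Proposition~\ref{Proposition Characterization}, pin down the admissible edge types shown in Figure~\ref{fig11}, pass to right/left degrees $r_i,\ell_i$, and reduce the degree constraints to the flow identities that yield~\eqref{syseq}. Your write-up is in fact more complete than the paper's, which simply asserts the structure of Figure~\ref{fig11} and treats the converse (realizing a solution of~\eqref{syseq} by actual edge multiplicities $x_i,y_i,z_i$) as implicit; your explicit classification of forbidden edges and the greedy reconstruction with the automatic disjunction $y_{2i-1}z_{2i}=0$ fill exactly those gaps.
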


\begin{remark}\label{zero}\rm
In fact, we may allow  $a_i=0$ also for $1\leqslant i\leqslant 2m-1$. Then the result becomes more general but less intuitive.

\begin{example}\label{a=0}\rm Consider the word
 $$W=\mathtt1\mathtt0^5\mathtt1\mathtt0^3\mathtt1\mathtt0^{4}\mathtt1\mathtt1\mathtt1\mathtt0^6\mathtt1\mathtt1\mathtt1\mathtt0^4\mathtt1\mathtt0^4.$$
 This is a word of type~\eqref{word} with
 $$a_1=5, a_2=3, a_3=4, a_4=a_5=0, a_6=6, a_7=a_8=0, a_9=4,a_{10}=4$$
 and the system~\eqref{syseq} has solutions
 $$r_1=5, r_2=1, r_3=r_4=r_5=0, r_6=3, r_7=r_8=0, r_9=4,r_{10}=0$$
 yielding perfect twins equal to $\mathtt1\mathtt0^6\mathtt1\mathtt1\mathtt0^3\mathtt1\mathtt1\mathtt0^{4}$ and with alternating $\mathtt1$'s:
  $$W=\underline{\textcolor{red}{\mathtt1\mathtt0^5}}\overline{\textcolor{blue}{\mathtt1}}\underline{\textcolor{red}{\mathtt0}}
  \overline{\textcolor{blue}{\mathtt0^2}}\underline{\textcolor{red}{\mathtt1}}\overline{\textcolor{blue}{\mathtt0^{4}}}
  \underline{\textcolor{red}{\mathtt1}}
 \overline{\textcolor{blue}{\mathtt1\mathtt1}}\underline{\textcolor{red}{\mathtt0^3}}\overline{\textcolor{blue}{\mathtt0^3}}
 \underline{\textcolor{red}{\mathtt1\mathtt1}}\overline{\textcolor{blue}{\mathtt1}}\underline{\textcolor{red}{\mathtt0^4}}
  \overline{\textcolor{blue}{\mathtt1\mathtt0^4}}.$$

\end{example}
\end{remark}

\medskip

For $m=2$, system~\eqref{syseq} becomes
$$\left\{\begin{array}{ll}\label{m=2}
r_0=\frac12(a_0+a_1-r_1)&\\
r_{2}=\frac12(a_{2}+a_{3}-a_1-r_{3})&\\
r_{4}=\frac12(a_{4}-r_{3}).&
\end{array}\right.$$
So, taking into account the additional constraints $0\leqslant r_i\leqslant a_i$, $i=0,1,2,3,4$,
the word $$W=\mathtt0^{a_0}\mathtt1\mathtt0^{a_1}\mathtt1\mathtt0^{a_2}\mathtt1\mathtt0^{a_3}\mathtt1\mathtt0^{a_4}$$ is a shuffle square (with the $\mathtt1$'s alternating) if there exist integers $r_1$ and $r_3$ such that
$$a_1-a_0\leqslant r_1\leqslant\,a_1+a_0, \;\; r_1\equiv\,a_1+a_0\quad (\!\!\!\!\!\!\mod 2)$$
and
\begin{equation}\label{maxmin}
\max(0,a_3-a_2-r_1)\leqslant r_3\leqslant\,\min(a_3+a_2-r_1,a_3,a_4),\;\;
r_3\equiv\,a_3+a_2-r_1 \quad(\!\!\!\!\!\!\mod 2).
\end{equation}

For $a_0=0$, we have $r_1=a_1$ and the constraints simplify even further, while the result becomes stronger, as we may drop the restriction to alternating $\mathtt1$'s.

\begin{corollary}\label{m=2,a_0=0}
An even binary word $$W=\mathtt1\mathtt0^{a_1}\mathtt1\mathtt0^{a_2}\mathtt1\mathtt0^{a_3}\mathtt1\mathtt0^{a_4}$$ with all $a_i\geqslant0$, $i=1,3,4$, and $a_2\geqslant1$ is a shuffle square  if and only if
\begin{equation}\label{2cond}
a_3+a_2-a_1\geqslant0\quad\mbox{ and }\quad a_4\geqslant a_3-a_2-a_1.
\end{equation}
\end{corollary}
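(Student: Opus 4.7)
The plan is to specialize the setup of Proposition~\ref{only1} to $m=2$ and $a_0=0$, and then handle the possibility that the four $\mathtt1$'s of $W$ do not alternate between the two twins (something Proposition~\ref{only1} does not cover) by a short direct argument on the ordered graph.

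For the sufficiency direction, I would substitute $a_0=0$ into the system~\eqref{syseq}, which instantly forces $r_0=0$ and $r_1=a_1$, leaving only $r_3$ free with bounds
\[
\max(0,\,a_3-a_1-a_2)\leqslant r_3 \leqslant \min(a_3,\,a_4,\,a_2+a_3-a_1).
\]
The two inequalities~\eqref{2cond} are exactly what is needed to make this range non-empty (the trivial bounds $r_3\leqslant a_3$ and $r_3\geqslant 0$ take care of the other comparisons). Because $W$ is even, the two parity requirements on $r_3$ coming from the integrality of $r_2$ and $r_4$ coincide, so an appropriate integer $r_3$ can be chosen, and Proposition~\ref{only1} then produces a perfect pair of twins witnessing that $W$ is a shuffle square.

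For the necessity direction, I would take an ordered nest-free graph $G$ associated by Proposition~\ref{Proposition Characterization} to some canonical perfect twins in $W$, and examine the induced matching on the four unit-capacity $\mathtt1$-vertices $u_1<u_2<u_3<u_4$. Nest-freeness disqualifies $\{u_1,u_4\},\{u_2,u_3\}$, leaving only the alternating matching $\{u_1,u_2\},\{u_3,u_4\}$ or the crossing one $\{u_1,u_3\},\{u_2,u_4\}$. In the alternating case, Proposition~\ref{only1} combined with the above simplification gives back~\eqref{2cond}. In the crossing case, the two top edges nest every loop at $v_1,v_2,v_3$, every $\mathtt0$-edge with both endpoints inside $(u_1,u_3)$ or inside $(u_2,u_4)$, and also the edge $\{v_1,v_4\}$; hence the only admissible $\mathtt0$-part structures are $\{v_1,v_3\}$, $\{v_2,v_4\}$, $\{v_3,v_4\}$ and loops at $v_4$, and the degree equations at $v_1,v_2,v_3,v_4$ then force $a_3\geqslant a_1$ and $a_1+a_4\geqslant a_2+a_3$, each a strict strengthening of the corresponding inequality in~\eqref{2cond}.

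The main obstacle I anticipate is the exhaustive verification of the crossing-matching case in necessity: a handful of small nesting checks are required to rule out each forbidden $\mathtt0$-edge and loop, after which the degree count is a short linear computation. A secondary subtlety is the parity of $r_3$ in the sufficiency direction when the allowed interval degenerates to the single point $r_3=0$; but evenness of $W$ gives $a_4\equiv a_2+a_3-a_1\pmod 2$, so the required parity is automatic in every such degenerate case.
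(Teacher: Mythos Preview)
Your approach is essentially the same as the paper's: both split according to the matching on the four $\mathtt1$-vertices, dispose of the ``consecutive $\mathtt1$'s'' (your crossing) case by showing its constraints strengthen~\eqref{2cond}, and then reduce to the alternating case via Proposition~\ref{only1} and the specialized system~\eqref{syseq}. Your treatment of the crossing case is in fact more careful than the paper's: by enumerating the admissible $\mathtt0$-edges you obtain $a_3\geqslant a_1$ together with the inequality $a_1+a_4\geqslant a_2+a_3$, whereas the paper asserts the equality $a_2+a_3=a_1+a_4$ (which is too strong---loops at $v_4$ and the edge $\{v_3,v_4\}$ are both allowed); this discrepancy is harmless, since either version implies~\eqref{2cond}. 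The parity discussion in your final paragraph matches the paper's handling of the degenerate interval almost verbatim.
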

\proof First observe that $W$ is a shuffle square with canonical twins of which the first twin takes the first two $\mathtt1$'s and the second twin - the last two $\mathtt1$'s \emph{if and only if} $a_1\leqslant a_3$ and $a_2+a_3=a_1+a_4$. Further, this pair of conditions implies ~\eqref{2cond}. Thus, $W$ is a shuffle square if and only if $W$ has perfect twins with alternating $\mathtt1$'s, which
by preceding discussion, holds if and only if
$$\max(0,a_3-a_2-a_1)\leqslant r_3\leqslant \min(a_3+a_2-a_1,a_3,a_4),$$
and $$r_3\equiv a_3+a_2-a_1 (\!\!\!\!\mod 2).$$ We are going to show that this is equivalent to~\eqref{2cond}. The necessary condition is straightforward. Assume now that ~\eqref{2cond} holds. If $a_4=a_3-a_2-a_1$, then $$\max(0,a_3-a_2-a_1)=\min(a_3+a_2-a_1,a_3,a_4)=a_4$$ and $$r_3=a_4\equiv a_3+a_2-a_1 (\!\!\!\!\mod 2).$$
Otherwise, owing to $a_2\geqslant1$, we have $$\max(0,a_3-a_2-a_1)<\min(a_3+a_2-a_1,a_3,a_4),$$ and so there are at least two possible values for $r_3$, allowing for parity adjustment. \qed


\begin{example}\label{4a}\rm
 Let $$W=\mathtt1\mathtt0^3\mathtt1\mathtt0^9\mathtt1\mathtt0^{11}\mathtt1\mathtt0^7.$$ We have $a_3+a_2-a_1=11+9-3=17$ and $a_3-a_2-a_1=-1$, so~\eqref{2cond} holds. Thus, system~\eqref{m=2} has  four solutions determined by the odd positive integers in $\{0,\dots,7\}$ (cf.~\eqref{maxmin}). They are, remembering that $r_1=a_1=3$,
 \begin{align*}&r_3=1, r_2=8, r_4=3
 \\&r_3=3, r_2=7, r_4=2,
 \\&r_3=5, r_2=6, r_4=1,
 \\&r_3=7, r_2=5, r_4=0
 \end{align*}
 and correspond, respectively, to the following four pairs of perfect twins in $W$:
 \begin{align*}
 (1)\;\;\;\;&\underline{\textcolor{red}{\mathtt1}}\underline{\textcolor{red}{\mathtt0^3}}\;\overline{\textcolor{blue}{\mathtt1}}\;\underline{\textcolor{red}{\mathtt0^8}}\;
 \overline{\textcolor{blue}{\mathtt0}}\;\underline{\textcolor{red}{\mathtt1}}\underline{\textcolor{red}{\mathtt0}}\;\overline{\textcolor{blue}{\mathtt0^{10}\mathtt1}}\;\underline{\textcolor{red}{\mathtt0^3}}\;\overline{\textcolor{blue}{\mathtt0^4}}&\text{with twins}\;\;\;\; &\mathtt1\;\mathtt0^{11}\;\mathtt1\;\mathtt0^4
  \\ (2)\;\;\;\;&
  \underline{\textcolor{red}{\mathtt1}}\underline{\textcolor{red}{\mathtt0^3}}\;\overline{\textcolor{blue}{\mathtt1}}\;\underline{\textcolor{red}{\mathtt0^7}}\;
 \overline{\textcolor{blue}{\mathtt0^2}}\;\underline{\textcolor{red}{\mathtt1}}\underline{\textcolor{red}{\mathtt0^3}}\;\overline{\textcolor{blue}{\mathtt0^{8}\mathtt1}}
\;\underline{\textcolor{red}{\mathtt0^2}}
 \;\overline{\textcolor{blue}{\mathtt0^5}}&\text{with twins}\;\;\;\;& \mathtt1\;\mathtt0^{10}\;\mathtt1\;\mathtt0^5
 \\ (3)\;\;\;\;&\underline{\textcolor{red}{\mathtt1}}\underline{\textcolor{red}{\mathtt0^3}}\;\overline{\textcolor{blue}{\mathtt1}}\;\underline{\textcolor{red}{\mathtt0^6}}
  \;\overline{\textcolor{blue}{\mathtt0^3}}\;\underline{\textcolor{red}{\mathtt1}}
 \underline{\textcolor{red}{\mathtt0^5}}\;\overline{\textcolor{blue}{\mathtt0^{6}\mathtt1}}\;\underline{\textcolor{red}{\mathtt0}}
 \;\overline{\textcolor{blue}{\mathtt0^6}}&\text{with twins}\;\;\;\;& \mathtt1\;\mathtt0^{9}\;\mathtt1\;\mathtt0^6
 \\ (4)\;\;\;\;
 &\underline{\textcolor{red}{\mathtt1}}\underline{\textcolor{red}{\mathtt0^3}}\;\overline{\textcolor{blue}{\mathtt1}}\;\underline{\textcolor{red}{\mathtt0^5}}\;
 \overline{\textcolor{blue}{\mathtt0^4}}\;\underline{\textcolor{red}{\mathtt1}}
 \underline{\textcolor{red}{\mathtt0^7}}\;\overline{\textcolor{blue}{\mathtt0^{4}\mathtt1\mathtt0^7}}&\text{with twins}\;\;\;\;& \mathtt1\;\mathtt0^{8}\;\mathtt1\;\mathtt0^7.
 \end{align*}
 The graphs corresponding to all four solutions are presented in Fig. \ref{fig12}. As $a_2+a_3\neq a_1+a_4$, there are no other perfect twins in $W$.
 \end{example}

 \begin{figure}[h]
\centering
\begin{tikzpicture}[scale=.95]

\node at (-8.4,2.5) {\large{{\bf (1)}}};

\coordinate (A1) at (-7.4,2);
\coordinate (A2) at (-6.6,.75);
\coordinate (A3) at (-5.8,2);
\coordinate (A4) at (-5,.75);
\coordinate (A5) at (-4.2,2);
\coordinate (A6) at (-3.4,.75);
\coordinate (A7) at (-2.6,2);
\coordinate (A8) at (-1.8,.75);

\foreach \P in {A1,A2,A3,A4,A5,A6,A7,A8}
 {\fill [line width=0.5pt] (\P) circle (2.5pt);}

\draw[-] (A1) to node[red,midway,above]{$1$} (A3);
\draw[-] (A5) to node[red,midway,above]{$1$} (A7);
\draw[-, bend left=65] (A2) to node[red,midway,above]{$2$} (A6);
\draw[-] (A2) to node[red,midway,below]{$1$} (A4);
\draw[-] (A4) to node[red,midway,below]{$8$} (A6);
\draw[-] (A6) to node[red,midway,below]{$1$} (A8);
\draw[-] (A8) to[in=50,out=130, distance=1.5cm] node[red,midway,above]{$3$} (A8);

\node[below=2pt] at (A2) {$(3)$};
\node[below=2pt] at (A4) {$(9)$};
\node[below=2pt] at (A6) {$(11)$};
\node[below=2pt] at (A8) {$(7)$};

\node at (0.8,2.5) {\large{{\bf (2)}}};

\coordinate (B1) at (1.8,2);
\coordinate (B2) at (2.6,.75);
\coordinate (B3) at (3.4,2);
\coordinate (B4) at (4.2,.75);
\coordinate (B5) at (5,2);
\coordinate (B6) at (5.8,.75);
\coordinate (B7) at (6.6,2);
\coordinate (B8) at (7.4,.75);

\foreach \P in {B1,B2,B3,B4,B5,B6,B7,B8}
 {\fill [line width=0.5pt] (\P) circle (2.5pt);}

\draw[-] (B1) to node[red,midway,above]{$1$} (B3);
\draw[-] (B5) to node[red,midway,above]{$1$} (B7);
\draw[-, bend left=65] (B2) to node[red,midway,above]{$1$} (B6);
\draw[-] (B2) to node[red,midway,below]{$2$} (B4);
\draw[-] (B4) to node[red,midway,below]{$7$} (B6);
\draw[-] (B6) to node[red,midway,below]{$3$} (B8);
\draw[-] (B8) to[in=50,out=130, distance=1.5cm] node[red,midway,above]{$2$} (B8);

\node[below=2pt] at (B2) {$(3)$};
\node[below=2pt] at (B4) {$(9)$};
\node[below=2pt] at (B6) {$(11)$};
\node[below=2pt] at (B8) {$(7)$};

\node at (-8.4,-.75) {\large{{\bf (3)}}};

\coordinate (C1) at (-7.4,-1.25);
\coordinate (C2) at (-6.6,-2.5);
\coordinate (C3) at (-5.8,-1.25);
\coordinate (C4) at (-5,-2.5);
\coordinate (C5) at (-4.2,-1.25);
\coordinate (C6) at (-3.4,-2.5);
\coordinate (C7) at (-2.6,-1.25);
\coordinate (C8) at (-1.8,-2.5);

\foreach \P in {C1,C2,C3,C4,C5,C6,C7,C8}
 {\fill [line width=0.5pt] (\P) circle (2.5pt);}

\draw[-] (C1) to node[red,midway,above]{$1$} (C3);
\draw[-] (C5) to node[red,midway,above]{$1$} (C7);
\draw[-] (C2) to node[red,midway,below]{$3$} (C4);
\draw[-] (C4) to node[red,midway,below]{$6$} (C6);
\draw[-] (C6) to node[red,midway,below]{$5$} (C8);
\draw[-] (C8) to[in=50,out=130, distance=1.5cm] node[red,midway,above]{$1$} (C8);

\node[below=2pt] at (C2) {$(3)$};
\node[below=2pt] at (C4) {$(9)$};
\node[below=2pt] at (C6) {$(11)$};
\node[below=2pt] at (C8) {$(7)$};

\node at (.8,-.75) {\large{{\bf (4)}}};

\coordinate (D1) at (1.8,-1.25);
\coordinate (D2) at (2.6,-2.5);
\coordinate (D3) at (3.4,-1.25);
\coordinate (D4) at (4.2,-2.5);
\coordinate (D5) at (5,-1.25);
\coordinate (D6) at (5.8,-2.5);
\coordinate (D7) at (6.6,-1.25);
\coordinate (D8) at (7.4,-2.5);

\foreach \P in {D1,D2,D3,D4,D5,D6,D7,D8}
 {\fill [line width=0.5pt] (\P) circle (2.5pt);}

\draw[-] (D1) to node[red,midway,above]{$1$} (D3);
\draw[-] (D5) to node[red,midway,above]{$1$} (D7);
\draw[-] (D2) to node[red,midway,below]{$3$} (D4);
\draw[-] (D4) to node[red,midway,below]{$4$} (D6);
\draw[-] (D6) to node[red,midway,below]{$7$} (D8);
\draw[-] (D4) to[in=50,out=130, distance=1.5cm] node[red,midway,above]{$1$} (D4);

\node[below=2pt] at (D2) {$(3)$};
\node[below=2pt] at (D4) {$(9)$};
\node[below=2pt] at (D6) {$(11)$};
\node[below=2pt] at (D8) {$(7)$};

\end{tikzpicture}
\caption{The possible graphs representing perfect twins in the shuffle square $\mathtt{1\;0}^{3}\mathtt{1\;0}^{9}\mathtt{1\;0}^{11}\mathtt{1\;0}^{7}$.}\label{fig12}
\end{figure}

\begin{example}\rm
Let $$W=\mathtt1\mathtt0^1\mathtt1\mathtt0^3\mathtt1\mathtt0^{2}\mathtt1\mathtt0^4.$$ This word satisfies both, condition~\eqref{2cond} and the system $a_1\leqslant a_3,\;a_2+a_3=a_1+a_4$. Consequently, it has both kinds of canonical twins: with alternating $\mathtt1$'s

$$\underline{\textcolor{red}{\mathtt1}}\underline{\textcolor{red}{\mathtt0}}\;\overline{\textcolor{blue}{\mathtt1}}\;
\underline{\textcolor{red}{\mathtt0}}\;
 \overline{\textcolor{blue}{\mathtt0^2}}\;\underline{\textcolor{red}{\mathtt1}}\underline{\textcolor{red}{\mathtt0^2}}\;
 \overline{\textcolor{blue}{\mathtt1}}\;\underline{\textcolor{red}{\mathtt0}}\;\overline{\textcolor{blue}{\mathtt0^3}},$$
and with consecutive $\mathtt1$'s
$$\underline{\textcolor{red}{\mathtt1}}\underline{\textcolor{red}{\mathtt0}}\;\underline{\textcolor{red}{\mathtt1}}\;
\underline{\textcolor{red}{\mathtt0^3}}\;
 \overline{\textcolor{blue}{\mathtt1}}\;\underline{\textcolor{red}{\mathtt0}}\overline{\textcolor{blue}{\mathtt0}}\;
 \overline{\textcolor{blue}{\mathtt1}}\;\overline{\textcolor{blue}{\mathtt0^4}}.$$
\end{example}

\begin{remark}\label{Inter}\rm Interestingly, if a word $$W=\mathtt{1\;0}^{a_1}\mathtt{1\;0}^{a_2}\mathtt{1\;0}^{a_{3}}\mathtt{1\;0}^{a_{4}}$$ with all $a_i\geqslant1$, $i=1,2,3,4$, violates condition~\eqref{2cond}, then there is always a~rotation so that the new word still begins with a $\mathtt1$, but now it is a~shuffle square.
 Indeed,  it suffices to redefine $a_1$ as the~smallest or second smallest of all four parameters, depending on which of them ``neighbors'' the largest parameter. Then $a_1\leqslant a_2+a_3$ and $a_2+a_4\geqslant a_3-a_1$, the latter because either $a_2$ or $a_4$ is the largest.

 More precisely, imagine a square with numbers $a_1,a_2,a_3,a_4$ occupying the four corners in the clockwise order (see Figure~\ref{square}), with the largest of them ($\ell$) sitting, say, in the left-top corner.  If the smallest number ($s$) sits at an adjacent corner, then let it be $a_1$ and so  $\ell=a_2$ or $\ell=a_4$. In either case both inequalities in~\eqref{2cond} hold. If $s$ sits opposite $\ell$, repeat the above, beginning with the second smallest one ($ss$) which now must be adjacent to $\ell$. Again, ~\eqref{2cond} holds (it is crucial that now $a_3$ is the second largest among the four numbers).
\end{remark}

\begin{figure}[h]
\centering
\begin{tikzpicture}
\def\x{2};\def\y{3.5};
\coordinate (a3L) at (0,0);
\coordinate (a2L) at (\x,0);
\coordinate (a1L) at (\x,\x);
\coordinate (a4L) at (0,\x);
\coordinate (a1R) at (\x+\y,0);
\coordinate (a4R) at (\x+\y+\x,0);
\coordinate (a3R) at (\x+\y+\x,\x);
\coordinate (a2R) at (\x+\y,\x);

\draw (a3L)--(a2L)--(a1L)--(a4L)--(a3L) (a1R)--(a4R)--(a3R)--(a2R)--(a1R);

\foreach \P in {a1L,a2L,a3L,a4L,a1R,a2R,a3R,a4R}
 {\fill [line width=0.5pt] (\P) circle (2.5pt);}

\node[above right] at (a3L) {\textcolor{red}{$a_3$}};
\node[above left] at (a2L) {\textcolor{red}{$a_2$}};
\node[below left] at (a1L) {\textcolor{red}{$a_1$}};
\node[below right] at (a4L) {\textcolor{red}{$a_4$}};

\node[above right] at (a1R) {\textcolor{red}{$a_1$}};
\node[above left] at (a4R) {\textcolor{red}{$a_4$}};
\node[below left] at (a3R) {\textcolor{red}{$a_3$}};
\node[below right] at (a2R) {\textcolor{red}{$a_2$}};

\node[above=5pt] at (a4L) {$\ell$};
\node[above=5pt] at (a1L) {$s$ or $ss$};
\node[above=5pt] at (a2R) {$\ell$};
\node[below=5pt] at (a1R) {$s$ or $ss$};
\end{tikzpicture}
\caption{Illustration to Remark \ref{Inter}.}\label{square}
\end{figure}

\begin{example}\rm
 For instance, let $$W=\mathtt{1\;0}^{16}\;\mathtt{1\;0}^{9}\;\mathtt{1\;0}^{4}\;\mathtt{1\;0}^{5}.$$ Here $$a_3+a_2-a_1=4+9-16<0,$$ so this word is not a shuffle square. But beginning at $\mathtt{1\;0}^{5}$, we get a new word $$W'=\mathtt{1\;0}^{5}\mathtt{1\;0}^{16}\mathtt{1\;0}^{9}\mathtt{1\;0}^{4},$$ a rotation of $W$, for which both inequalities in~\eqref{2cond} hold.
 Thus, $W'$ is a desired shuffle square.

\end{example}

\medskip

As a smooth transition to the next section, consider a class of binary words with all $\mathtt1$-runs of length one and all $\mathtt0$-runs of length at most two. Below we show that then, with one exception,  the conditions of Proposition~\ref{only1} are satisfied.
So, such words are shuffle squares, even with the additional property that the $\mathtt{1}$'s are alternating between the twins. (This can also be proved directly in terms of  nest-free graphs,  in a similar way as Proposition~\ref{Ths1} in the~next section.)

\begin{corollary}\label{1and2}
Every even binary word with all $\mathtt1$-runs of length one and all $\mathtt0$-runs of length at most two, except $\mathtt{1001}$, is a shuffle square.
\end{corollary}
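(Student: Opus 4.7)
The plan is to split into regimes by $m$, the number of consecutive pairs of $\mathtt{1}$'s. For $m=0$, the word is $\mathtt{0}^{a_0}$ with $a_0\in\{0,2\}$, which is trivially a shuffle square. For $m=1$, $W=\mathtt{0}^{a_0}\mathtt{1}\mathtt{0}^{a_1}\mathtt{1}\mathtt{0}^{a_2}$ has at most five runs, so Propositions~\ref{abcd} and~\ref{abcde} apply; a brief case split on whether $a_0,a_2$ are positive shows that the required condition collapses, in every subcase, to $a_1\le a_0+a_2$. Combined with the evenness of $a_0+a_1+a_2$, this inequality fails only when $a_0=a_2=0$ and $a_1=2$, i.e., only for $W=\mathtt{1001}$.

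For $m\ge 2$ I will exhibit an explicit solution to the system~\eqref{syseq} and invoke Proposition~\ref{only1}. Any solution must satisfy $r_{2i-1}\equiv \epsilon_i\pmod 2$, where $\epsilon_i:=\left(\sum_{j=0}^{2i-1}a_j\right)\bmod 2\in\{0,1\}$, so the natural choice is to put $r_{2i-1}:=\epsilon_i$ for every $i$. The middle equations are readily verified with this assignment: $r_{2i-1}+r_{2i+1}=\epsilon_i+\epsilon_{i+1}\in\{0,1,2\}$ lies inside $[a_{2i+1}-a_{2i},\,a_{2i}+a_{2i+1}]$ with the correct parity. The upper bound is immediate from $a_{2i},a_{2i+1}\ge 1$, and the only non-trivial lower bound $a_{2i+1}-a_{2i}=1$ occurs at $(a_{2i},a_{2i+1})=(1,2)$, where parity forces $\epsilon_i+\epsilon_{i+1}=1$ exactly.

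The one obstacle is the left boundary, where the choice $r_1=\epsilon_1=0$ would violate $r_0\le a_0$ precisely when $(a_0,a_1)=(0,2)$. I resolve this single exceptional case by redefining $r_1:=2$ and re-examining the first middle equation: since $a_0+a_1=2$ is even, $\epsilon_2\equiv a_2+a_3\pmod 2$, so $r_1+r_3=2+\epsilon_2$ equals either $2$ (trivially $\le a_2+a_3$) or $3$ (which forces $a_2+a_3$ to be odd and hence equal to $3$). The right boundary is unproblematic, since $r_{2m-1}=\epsilon_m=a_{2m}\bmod 2\le a_{2m}$ automatically. With all constraints of~\eqref{syseq} satisfied, Proposition~\ref{only1} produces the required perfect twins and completes the proof.
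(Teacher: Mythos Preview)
Your proof is correct and follows essentially the same strategy as the paper---solve the system~\eqref{syseq} by choosing the odd-indexed $r_{2i-1}$ minimally consistent with parity, and invoke Proposition~\ref{only1}. The main difference is one of completeness and presentation. The paper only treats the subcase $a_0=0$, $a_{2m}\ge 1$ (words starting with $\mathtt1$ and ending with $\mathtt0$), explicitly leaving the remaining boundary configurations to the reader; you handle all of them. Moreover, where the paper fixes $r_1=a_1$ and then inductively picks each $r_{2i+1}\in\{0,1\}$ to make $\rho_i$ even, you give the closed formula $r_{2i-1}=\epsilon_i=\big(\sum_{j\le 2i-1}a_j\big)\bmod 2$; the two constructions coincide from the second odd index on, but your formulation makes the parity bookkeeping transparent and lets the single boundary obstruction $(a_0,a_1)=(0,2)$ be isolated cleanly and patched by setting $r_1:=2$. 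Your $m=1$ analysis via Propositions~\ref{abcd} and~\ref{abcde} is likewise a genuine extension, since with general $a_0,a_2\in\{0,1,2\}$ the word need not be a square and the collapse of the five-run criterion to $a_1\le a_0+a_2$ has to be argued. One tiny point you left implicit: in the exceptional patch you verified $r_1+r_3\le a_2+a_3$ (giving $r_2\ge 0$) but not the companion bound $r_1+r_3\ge a_3-a_2$ (giving $r_2\le a_2$); this is immediate since $r_1+r_3\ge 2>a_3-a_2$, but it is worth a word.
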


\begin{proof} We present the proof only in the case when $$W=\mathtt{10}^{a_1}\mathtt{10}^{a_2}\cdots\mathtt{10}^{a_{2m}}$$ with all $a_i\in\{1,2\}$, $i=1,\dots,2m$, leaving the remaining cases to the reader.

For $m=1$, we are looking at $W=\mathtt{1\;0}^{a_1}\;\mathtt{1\;0}^{a_2}$ with $a_1$ and $a_2$ of the same parity, and thus with $a_1=a_2$. Then, trivially, $W$ is a shuffle square (in fact it is a square $W=(\mathtt{10^{a_1}})^2$).

 For $m\geqslant2$, we are going to check that the system of equations~\eqref{syseq}, with $a_0=r_0=0$ and $r_1=a_1$, has integer solutions $r_1,\dots,r_{2m}$ with $0\leqslant r_i\leqslant a_i$. This boils down to choosing the~values of variables $r_{2i-1}$, $i=1,\dots,m$, so that $r_1=a_1$,  the quantities $$a_{2i}+a_{2i+1}-r_{2i-1}-r_{2i+1},$$ $i=1,\dots, m-1$, as well as $a_{2m}-r_{2m-1}$ are all even, and $0\leqslant r_i\leqslant a_i$ for all $i$.

First observe that if $$a_{2i}+a_{2i+1}-r_{2i-1}-r_{2i+1}$$ is even for all $i=1,\dots, m-1$, then so is $a_{2m}-r_{2m-1}$. Indeed, recalling that $r_1=a_1$ and that $a_1$ and $-a_1$ have the same parity,
$$\sum_{i=1}^{m-1}(a_{2i}+a_{2i+1}-r_{2i-1}-r_{2i+1})\equiv \sum_{i=1}^{2m-1}a_i-r_{2m-1}\equiv 0\pmod 2.$$
Since $\sum_{i=1}^{2m}a_i\equiv 0\pmod 2$, we have $a_{2m}-r_{2m-1}\equiv 0 \pmod 2$, as desired.

Now, fix $1\leqslant i\leqslant m-1$,  assume that $r_1,\dots,r_{2i-1}$ have been chosen properly, and focus on the expression $$\rho_i:=a_{2i}+a_{2i+1}-r_{2i-1}-r_{2i+1}.$$
Since  $a_{2i},a_{2i+1}\in\{1,2\}$ and $0\leqslant r_{2i-1}\leqslant a_{2i-1}\leqslant2$, we have
$$0\leqslant a_{2i}+a_{2i+1}-r_{2i-1}\leqslant 4.$$
Hence, it is always possible to choose $r_{2i+1}\in\{0,1\}$ so that $\rho_i$ is even and nonnegative. Moreover, then $r_{2i+1}\leqslant1\leqslant a_{2i+1}$ and $r_{2i}\leqslant a_{2i}$. To see the latter, note that if $a_{2i}=1$, then $$a_{2i}+a_{2i+1}-r_{2i-1}\leqslant 3,$$ so $\rho_i\leqslant 2$ and $r_{2i}=\rho_i/2\leqslant1$, while if $a_{2i}=2$, then $r_{2i}=\rho_i/2\leqslant 2$.

Finally, for $i=m-1$, we need to check that also $$0\leqslant r_{2m}=\tfrac12(a_{2m}-r_{2m-1})\leqslant a_{2m}.$$ The~right-hand side inequality is obvious. As for the left-hand side, observe that by the above proof $r_{2m-1}\leqslant1$, so $r_{2m-1}\leqslant a_{2m}$.

\end{proof}

\section{The odd ABBA problem and beyond}\label{sec-abba}

Recall that if every run of a binary word $W$ has an even length, in particular, length two, then,  trivially, $W$~is a shuffle square. It is perhaps a bit surprising that a modest relaxation allowing runs of length one, but only for one of the two letters, say letter $\mathtt1$, becomes quite non-trivial.

One can quickly come up with such words which are not shuffle squares, like $\mathtt{1001}$ or $\mathtt{100110011001}=\mathtt{1001}^3$. Our main result, Theorem~\ref{abba}, proved in this section, implies that all words of the form $(\mathtt{1001})^n$, where $n$ is odd, are indeed \emph{not} shuffle squares.

But  first, as a sort of complementary result, we show that all even binary words with all $\mathtt0$-runs of length two and all $\mathtt1$-runs of length at most two, which are not of the form $(\mathtt{1001})^n$, $n$ odd, are shuffle squares. So, in fact, the ``odd ABBAs'' are the sole exception in this class.

\begin{proposition}\label{Ths1}
 Every even binary word in which all $\mathtt0$-runs have length two, while all $\mathtt1$-runs have length at most two is a shuffle square, unless it is of
the~form $(\mathtt{1001})^n$ for some odd~$n$.
\end{proposition}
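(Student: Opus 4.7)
The plan is to apply Proposition~\ref{Proposition Characterization} and to construct, for every admissible $W$ other than the excluded $(\mathtt{1001})^n$ with $n$ odd, a nest-free ordered graph of the right degrees. Following standard shorthand, write $a$ for a $\mathtt1$-run of length one (a \emph{single}), $b$ for one of length two (a \emph{double}), and $c$ for a (length-two) $\mathtt0$-run. If $W$ has no singles then $W$ is dull and a loop at every vertex works, so we may assume $W$ has $2m\ge2$ singles $\alpha_1<\cdots<\alpha_{2m}$.

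The core observation is a building-block lemma: for every $\ell\ge0$, both $B_\ell:=ac(bc)^\ell ac$ and its left-right mirror $B'_\ell:=cac(bc)^\ell a$ are shuffle squares. The explicit graph uses the path $u_1u_2\cdots u_{\ell+2}$ of simple edges in the $\mathtt1$-subgraph, giving each interior double degree $2$ and each end single degree $1$, together with the consecutive double-edge pairings $(v_1v_2),(v_3v_4),\ldots$ of the $\mathtt0$-runs in the $\mathtt0$-subgraph; when the number of $\mathtt0$-runs is odd, the leftover gets a loop placed on whichever of the two extreme $\mathtt0$-runs (the last in $B_\ell$, the first in $B'_\ell$) lies outside every $\mathtt1$-edge. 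Nest-freeness is routine: each pairing edge $(v_{2j-1}v_{2j})$ crosses only the two $\mathtt1$-edges incident to $u_{2j}$ in the alternating pattern $u<v<u<v$, and the isolated loop (if any) sits strictly outside every other edge's interval.

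I would then induct on $|W|$, using that concatenation of shuffle squares is a shuffle square. If $W$ starts with $c$ or $b$, then according to the first few runs one of the short shuffle-square prefixes $\mathtt{00}$, $\mathtt{11}$, $\mathtt{0011}$, $\mathtt{1100}$, $\mathtt{001001}$, $\mathtt{100100}$ peels off, leaving a shorter in-class remainder that cannot be of the excluded form (since $(\mathtt{1001})^n$ begins with a single). A symmetric argument via reversal handles $W$ ending with $c$ or $b$. The remaining scenario is $W=\alpha_1S_1\alpha_2S_2\cdots\alpha_{2m}$ with each $S_i=c(bc)^{\ell_i}$. For $m=1$ we have $W=(\mathtt{1001})^{\ell_1+1}$: only $\ell_1$ odd survives exclusion, and then the chain-plus-pairing construction of the building block applies directly because the $\ell_1+1$ $\mathtt0$-runs pair exactly. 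For $m\ge2$ peel off $P:=\alpha_1S_1\alpha_2c=B_{\ell_1}$ (consuming the first $c$ of $S_2$): the residual starts with $b$ if $\ell_2\ge1$ (falling into the first bullet) or with $\alpha_3$ and has $2(m-1)$ singles if $\ell_2=0$, so induction on $m$ closes the argument.

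The main difficulty I foresee is a careful parity audit ensuring the recursion never leaves behind a forbidden residual---a word with exactly two terminal singles separated by an even number of doubles. In such edge cases (which can only arise when $m=2$, $\ell_2=0$, and $\ell_3$ has the wrong parity) one must instead execute a longer peel, for instance $\alpha_1S_1\alpha_2S_2\alpha_3S_3\alpha_4c$, whose shuffle-square status follows from a mild extension of the building-block construction handling two paired single-pairs simultaneously.
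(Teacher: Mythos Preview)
Your peeling strategy is a viable alternative to the paper's induction on the number $N$ of single-to-single paths in $G_1$, but the edge case you flag at the end is not closed by what you propose. When $m=2$, $\ell_2=0$, and $\ell_3$ is even, the word $W=\alpha_1 S_1 \alpha_2\, c\, \alpha_3 S_3 \alpha_4$ terminates at $\alpha_4$; there is no trailing $c$, so your ``longer peel $\alpha_1 S_1 \alpha_2 S_2 \alpha_3 S_3 \alpha_4 c$'' is simply unavailable. Peeling the mirrored block $B'_{\ell_3}$ from the right instead leaves $(\mathtt{1001})^{\ell_1+1}$, which is again excluded whenever $\ell_1$ is even. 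Hence for $\ell_1,\ell_3$ both even---already at the smallest instance $W=\mathtt{1001001001}$---no peel works and $W$ must be handled in one shot.

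Here your toolkit (paths in $G_1$, consecutive double-edge pairings in $G_0$, and a loop at an extreme $\mathtt0$-vertex) is genuinely insufficient. The two $\mathtt1$-paths have lengths $\ell_1+1$ and $\ell_3+1$, both odd, so each sits over an odd number of $\mathtt0$-vertices; the single $\mathtt0$-vertex $v_{\ell_1+2}$ between the paths can be paired to rescue one side but not both, and every other $\mathtt0$-vertex lies strictly inside some $\mathtt1$-edge, so no loop is admissible there. What is actually needed is a \emph{triangle} on $v_{\ell_1+1},v_{\ell_1+2},v_{\ell_1+3}$ (the three $\mathtt0$-vertices straddling $\alpha_2$ and $\alpha_3$), flanked by consecutive pairings on either side. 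This is precisely the content of the paper's Observation~\ref{2-way-rescuer}; it introduces a $2$-regular nest-free pattern that is not a ``mild extension'' of pairings-plus-loops and should be stated and verified explicitly. With that gadget in hand (and the remark that for $\ell_1$ odd one may simply peel from the right), your inductive scheme does go through.
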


\begin{proof} Let $W$ be an even binary word with every $\mathtt0$-run of length two and every $\mathtt1$-run of length at most 2. Assume that $W\neq(\mathtt{1001})^n$ for any odd $n$.
	
	We are going to construct a  corresponding nest-free graph $G$ on vertices $u_1,\dots, u_{m}$, where $m$ is the total number of runs in $W$. As always $G$ consists of two separated subgraphs, $G_1$ and $G_0$ whose vertices interlace and correspond to the $\mathtt1$-runs and $\mathtt0$-runs of $W$, respectively. Within $G_1$ we distinguish the vertices with capacity 1, call them \emph{singles}, and with capacity~2, call them \emph{doubles}.
   As  $G_1$ (the top graph in all figures) we take a union of paths connecting consecutive singles and going through all the doubles along the way, plus loops at all other doubles.

 Throughout the remainder of the proof, we will show that any such $G_1$ can be complemented by a suitable $2$-regular graph $G_0$ so that the union $G=G_0\cup G_1$ is nest-free.

We begin with two observations that can be verified by just looking at the corresponding illustrations.

   \begin{observation}\label{rescuer} If
$$P=(u_h,u_{h+2},\dots,u_{h+2s})$$ is a path of an odd length $s$ in $G_1$ and $h\geqslant2$, then there is a nest-free graph on $$\{u_{h-1},u_h,u_{h+1},\dots,u_{h+2s}\}$$ which, in addition to $P$, consists of a $2$-regular subgraph on $$\{u_{h-1},u_{h+1},\dots,u_{h+2s-1}\}.$$ An analogous statement is also
true in the symmetric case when $$h+2s<m$$ with $u_{h-1}$ replaced by $u_{h+2s+1}$.
   \end{observation}
For the proof see Fig. \ref{fig14}.

 \begin{figure}[h]
\centering
\begin{tikzpicture}[scale=1]
\coordinate (A1) at (0,0);
\coordinate (A2) at (0.75,1);
\coordinate (A3) at (1.5,0);
\coordinate (A4) at (2.25,1);
\coordinate (A5) at (3,0);
\coordinate (A6) at (3.75,1);
\coordinate (A7) at (4.5,0);
\coordinate (A8) at (5.25,1);
\coordinate (A9) at (6,0);
\coordinate(A10x) at (6.75,1);
\coordinate(A11x) at (7.5,0);

\node at (7.5,1.75) {$P$ ($s$ odd)};
\node at (7.5,1) {$\cdots$};
\node at (7.5,0) {$\cdots$};

\coordinate (A8x) at (7.5,0);
\coordinate (A9x) at (8.25,1);
\coordinate (A10) at (9,0);
\coordinate (A11) at (9.75,1);
\coordinate (A12) at (10.5,0);
\coordinate (A13) at (11.25,1);
\coordinate (A14) at (12,0);
\coordinate (A15) at (12.75,1);

\foreach \P in {A1,A2,A3,A4,A5,A6,A7,A8,A9,A10,A11,A12,A13,A14,A15}
 {\fill [line width=0.5pt] (\P) circle (2.5pt);}

\draw[-] (A2) to (A4);
\draw[-] (A4) to (A6);
\draw[-] (A6) to (A8);

\draw[-] (A11) to (A13);
\draw[-] (A13) to (A15);

\draw[-, bend left=30] (A1) to (A3);
\draw[-, bend right=30] (A1) to (A3);
\draw[-, bend left=30] (A5) to (A7);
\draw[-, bend right=30] (A5) to (A7);
\draw[-, bend left=30] (A12) to (A14);
\draw[-, bend right=30] (A12) to (A14);

\begin{scope}
\clip (5.25,-.5) rectangle (6.6,1);
\draw[-] (A8) to (A10x);
\draw[-, bend left=30] (A9) to (A11x);
\draw[-, bend right=30] (A9) to (A11x);
\end{scope}

\begin{scope}
\clip (8.4,-.5) rectangle (9.75,1);
\draw[-] (A9x) to (A11);
\draw[-, bend left=30] (A8x) to (A10);
\draw[-, bend right=30] (A8x) to (A10);
\end{scope}

\node[below=2mm] at (A1) {$u_{h-1}$};
\node[below=2mm] at (A2) {$u_h$};
\node[below=2mm] at (A3) {$u_{h+1}$};
\node[below=2mm] at (A4) {$u_{h+2}$};
\node[below=2mm] at (A5) {$u_{h+3}$};

\node[below=2mm] at (A14) {$u_{h+2s-1}$};
\node[below=2mm] at (A15) {$u_{h+2s}$};
\end{tikzpicture}
\caption{An illustration to the proof of Observation~\ref{rescuer}.}\label{fig14}
\end{figure}

   \begin{observation}\label{2-way-rescuer}
If $$P=(u_h,u_{h+2},\dots,u_{h+2s})\ \mbox{ and }\ Q=(u_{h+2s+2},\dots,u_{h+2s+2t})$$ are two consecutive odd paths $P$ and $Q$ in $G_1$, then there is a nest-free graph on $$\{u_h,u_{h+1},\dots,u_{h+2s+2t}\}$$ which, in addition to $P$ and $Q$, consists of a $2$-regular subgraph on $$\{u_{h+1},u_{h+3},\dots,u_{h+2s-1}, u_{h+2s+1},\dots,u_{h+2s+2t-1}\}.$$
   \end{observation}
For the proof see Fig. \ref{fig15}.

\begin{figure}[h]
\centering
\begin{tikzpicture}[scale=1]

\coordinate (A1) at (-6.5,1);
\coordinate (A2) at (-5.5,1);
\coordinate (A3) at (-4.5,1);
\coordinate(A3x) at (-3.5,1);

\node at (-3.5,1.4) {$P$ (odd)};
\node at (-3.5,1) {$\cdots$};

\coordinate (A4) at (-2.5,1);
\coordinate (A5) at (-1.5,1);
\coordinate (A6) at (-.5,1);
\coordinate (A7) at (.5,1);
\coordinate (A8) at (1.5,1);
\coordinate (A9) at (2.5,1);

\node at (3.5,1.4) {$Q$ (odd)};
\node at (3.5,1) {$\cdots$};

\coordinate(A10x) at (3.5,1);
\coordinate (A10) at (4.5,1);
\coordinate (A11) at (5.5,1);
\coordinate (A12) at (6.5,1);

\foreach \P in {A1,A2,A3,A4,A5,A6,A7,A8,A9,A10,A11,A12}
 {\fill [line width=0.5pt] (\P) circle (2.5pt);}

\draw[-] (A1) to (A2);
\draw[-] (A2) to (A3);
\begin{scope}
\clip (-4.5,.5) rectangle (-4.2,1.5);
\draw[-] (A3) to (A3x);
\end{scope}

\begin{scope}
\clip (-2.8,.5) rectangle (-2.5,1.5);
\draw[-] (A3x) to (A4);
\end{scope}
\draw[-] (A4) to (A5);
\draw[-] (A5) to (A6);

\draw[-] (A7) to (A8);
\draw[-] (A8) to (A9);
\begin{scope}
\clip (2.5,.5) rectangle (2.8,1.5);
\draw[-] (A9) to (A10x);
\end{scope}

\begin{scope}
\clip (4.2,.5) rectangle (4.5,1.5);
\draw[-] (A10x) to (A10);
\end{scope}
\draw[-] (A10) to (A11);
\draw[-] (A11) to (A12);

\coordinate (B1) at (-6,0);
\coordinate (B2) at (-5,0);

\node at (-4,0) {$\cdots$};

\coordinate (B3) at (-3,0);
\coordinate (B4) at (-2,0);
\coordinate (B5) at (-1,0);
\coordinate (B6) at (0,0);
\coordinate (B7) at (1,0);
\coordinate (B8) at (2,0);
\coordinate (B9) at (3,0);

\node at (4,0) {$\cdots$};

\coordinate (B10) at (5,0);
\coordinate (B11) at (6,0);

\foreach \P in {B1,B2,B3,B4,B5,B6,B7,B8,B9,B10,B11}
 {\fill [line width=0.5pt] (\P) circle (2.5pt);}

\draw[-, bend left=30] (B1) to (B2);
\draw[-, bend right=30] (B1) to (B2);
\draw[-, bend left=30] (B3) to (B4);
\draw[-, bend right=30] (B3) to (B4);

\draw[-] (B5) to (B6);
\draw[-] (B6) to (B7);
\draw[-, bend left=45] (B5) to (B7);

\draw[-, bend left=30] (B8) to (B9);
\draw[-, bend right=30] (B8) to (B9);
\draw[-, bend left=30] (B10) to (B11);
\draw[-, bend right=30] (B10) to (B11);

\end{tikzpicture}
\caption{An illustration to the proof of Observation~\ref{2-way-rescuer}.}\label{fig15}
\end{figure}

With the two observations in hand, we may quickly prove Proposition~\ref{Ths1} by induction on $N$, the number of paths in $G_1$. The~case $N=0$, that is when $W$ is a dull word, is trivial. For $N=1$, if the~sole path $P$ in $G_1$ has even length, then we draw double edges between consecutive pairs  of vertices of $G_0$ lying ``under $P$'' and loops elsewhere (see Fig. \ref{fig16}). Otherwise, as $W$ is not of the~form $(\mathtt{1001})^n$, $n$ odd, the conclusion follows by Observation~\ref{rescuer}.

\begin{figure}[h]
\centering
\begin{tikzpicture}
\coordinate (A1) at (-3.5,1);
\coordinate (A2) at (-2.5,1);
\coordinate (A3) at (-1.5,1);
\coordinate (A4) at (-.5,1);
\coordinate (A5) at (.5,1);
\coordinate (A6) at (1.5,1);
\coordinate (A7) at (2.5,1);
\coordinate (A8) at (3.5,1);

\foreach \P in {A1,A2,A3,A4,A5,A6,A7,A8}
 {\fill [line width=0.5pt] (\P) circle (2.5pt);}

\coordinate (B1) at (-3,0);
\coordinate (B2) at (-2,0);
\coordinate (B3) at (-1,0);
\coordinate (B4) at (0,0);
\coordinate (B5) at (1,0);
\coordinate (B6) at (2,0);
\coordinate (B7) at (3,0);

\foreach \P in {B1,B2,B3,B4,B5,B6,B7}
 {\fill [line width=0.5pt] (\P) circle (2.5pt);}

 \draw[-] (A1) to[in=50,out=130, distance=1.5cm] (A1);
\draw[-] (A2) to (A3);
\draw[-] (A3) to (A4);
\draw[-] (A4) to (A5);
\draw[-] (A5) to (A6);
\draw[-] (A7) to[in=50,out=130, distance=1.5cm] (A7);
\draw[-] (A8) to[in=50,out=130, distance=1.5cm] (A8);

\draw[-] (B1) to[in=50,out=130, distance=1.5cm] (B1);
\draw[-, bend left=30] (B2) to (B3);
\draw[-, bend right=30] (B2) to (B3);
\draw[-, bend left=30] (B4) to (B5);
\draw[-, bend right=30] (B4) to (B5);
\draw[-] (B6) to[in=50,out=130, distance=1.5cm] (B6);
\draw[-] (B7) to[in=50,out=130, distance=1.5cm] (B7);

\end{tikzpicture}
\caption{An illustration to the proof of Proposition~\ref{Ths1}, case $N=1$.}\label{fig16}
\end{figure}

Assume now that $N\geqslant 2$ and the statement is true for fewer than $N$ paths in $G_1$. Let $P$ and $Q$ be the first two (from left) paths in $G_1$. Let $p$ be the leftmost vertex of $P$ and let $q$ be the rightmost vertex of $Q$. If there are double vertices between $P$ and $Q$, then we may apply Observation~\ref{rescuer} separately to each  path if needed, that is, if it has an odd length (see Fig. \ref{fig17}). Otherwise, we apply Observation~\ref{2-way-rescuer}. In either case, one can build a
nest-free subgraph $G[P,Q]$ on the vertex set $\{p,p+1,\dots,q-1,q\}$ with all vertices of degrees equal to their capacities. After removing all the vertices of $G[P,Q]$, if there is still a nonempty remaining part, we apply the inductive assumption.

\begin{figure}[h]
\centering
\begin{tikzpicture}[scale=1]

\coordinate (A1) at (-6.5,1);
\coordinate (A2) at (-5.5,1);
\coordinate (A3) at (-4.5,1);
\coordinate (A4) at (-3.5,1);
\coordinate (A5) at (-2.5,1);
\coordinate (A6) at (-1.5,1);
\coordinate (A7) at (-.5,1);
\coordinate (A8) at (.5,1);
\coordinate (A9) at (1.5,1);
\coordinate (A10) at (2.5,1);
\coordinate (A11) at (3.5,1);
\coordinate (A12) at (4.5,1);
\coordinate (A13) at (5.5,1);
\coordinate (A14) at (6.5,1);

\foreach \P in {A1,A2,A3,A4,A5,A6,A7,A8,A9,A10,A11,A12,A13,A14}
 {\fill [line width=0.5pt] (\P) circle (2.5pt);}

\draw[-] (A1) to (A2) to (A3) to (A4) to (A5) to (A6);
\draw[-] (A7) to[in=50,out=130, distance=1.5cm] (A7);
\draw[-] (A8) to (A9) to (A10) to (A11) to (A12) to (A13) to (A14);

\node at (-4,1.5) {$P$ (odd)};
\node at (3.5,1.5) {$Q$ (even)};

\coordinate (B1) at (-6,0);
\coordinate (B2) at (-5,0);
\coordinate (B3) at (-4,0);
\coordinate (B4) at (-3,0);
\coordinate (B5) at (-2,0);
\coordinate (B6) at (-1,0);
\coordinate (B7) at (0,0);
\coordinate (B8) at (1,0);
\coordinate (B9) at (2,0);
\coordinate (B10) at (3,0);
\coordinate (B11) at (4,0);
\coordinate (B12) at (5,0);
\coordinate (B13) at (6,0);

\foreach \P in {B1,B2,B3,B4,B5,B6,B7,B8,B9,B10,B11,B12,B13}
 {\fill [line width=0.5pt] (\P) circle (2.5pt);}

\draw[-, bend left=30] (B1) to (B2);
\draw[-, bend right=30] (B1) to (B2);
\draw[-, bend left=30] (B3) to (B4);
\draw[-, bend right=30] (B3) to (B4);
\draw[-, bend left=30] (B5) to (B6);
\draw[-, bend right=30] (B5) to (B6);
\draw[-] (B7) to[in=50,out=130, distance=1.5cm] (B7);
\draw[-, bend left=30] (B8) to (B9);
\draw[-, bend right=30] (B8) to (B9);
\draw[-, bend left=30] (B10) to (B11);
\draw[-, bend right=30] (B10) to (B11);
\draw[-, bend left=30] (B12) to (B13);
\draw[-, bend right=30] (B12) to (B13);

\end{tikzpicture}
\caption{An illustration to the proof of Proposition~\ref{Ths1}, case $m\geqslant2$.}\label{fig17}
\end{figure}

\end{proof}

The proof of our main result, Theorem~\ref{abba}, relies on the following lemma.
\begin{lemma}\label{Lemma Path-Cycle}
	Let $H=P\cup C$ be an ordered graph on the set of vertices $\{1,2,\dots,n\}$, consisting of two vertex disjoint subgraphs: a path $P$ with endpoints at $1$ and $n$, and a cycle $C$. If $H$ is nest-free, then $C$ is an~\emph{even} cycle.
\end{lemma}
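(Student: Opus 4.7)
The plan is to show that $C$ is bipartite by exhibiting a proper $2$-coloring of its vertices; this forces $|C|$ to be even. For each vertex $v \in V(C)$, define
$$\chi(v) = \bigl|\{e \in E(P) : \max e < v\}\bigr| \bmod 2,$$
the parity of the number of edges of $P$ lying entirely to the left of $v$. It suffices to check that for every edge $\{v, v'\}$ of $C$ one has $\chi(v) \neq \chi(v')$, since then $\chi$ is a proper $2$-coloring, $C$ is bipartite, and thus $|C|$ is even.

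Fix such an edge $\{v, v'\}$ with $v < v'$. Because $V(P) \cap V(C) = \emptyset$, the quantity $\chi(v') - \chi(v) \bmod 2$ is precisely the parity of $|\{e \in E(P) : \max e \in (v, v')\}|$, so I need to show this count is odd. The natural framework is the partition $V(P) = L \sqcup M \sqcup R$ with $L = \{p < v\}$, $M = \{v < p < v'\}$, $R = \{p > v'\}$; the assumption $V(C) \subseteq \{2, \dots, n-1\}$ guarantees $1 \in L$ and $n \in R$.

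Now the nest-free hypothesis rules out two kinds of $P$-edges: one with an endpoint in $L$ and an endpoint in $R$ would nest the $C$-edge $\{v, v'\}$ inside it, while one with both endpoints in $M$ would itself be nested inside $\{v, v'\}$. Viewing $P$ as a walk from $1 \in L$ to $n \in R$ on the alphabet $\{L, M, R\}$, the directed transitions $L \to R$, $R \to L$, and $M \to M$ are therefore all forbidden. A one-line parity argument---the walk starts in $L$, ends outside $L$, so departures from $L$ exceed arrivals into $L$ by exactly one---gives that the number of $L \to M$ steps is one more than the number of $M \to L$ steps; in particular their sum is odd. Finally, a $P$-edge with $\max \in (v, v')$ is forced to have its $\min$ in $L$: the minimum cannot lie in $M$ (the edge would then be nested inside $\{v, v'\}$), and it cannot lie in $R$ (since $\min < \max < v'$). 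So these $P$-edges are exactly those traversed by an $L \leftrightarrow M$ transition of the walk; their count is odd, giving $\chi(v) \neq \chi(v')$.

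The only delicate step I anticipate is the case analysis identifying exactly which $P$-edges contribute to $\chi(v') - \chi(v)$---ensuring the translation between ``edges with $\max$ in $(v, v')$'' and ``$L \leftrightarrow M$ transitions'' is correct---but this is elementary and should occupy only a few lines.
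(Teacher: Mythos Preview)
Your proof is correct. Both your argument and the paper's establish that $C$ is bipartite by exhibiting a proper $2$-coloring of $V(C)$ derived from how the edges of $P$ sit relative to each cycle vertex, and the two colorings are in fact complementary. The paper first linearly orders $E(P)$ (using nest-freeness of $P$ alone) as $e_1\prec\cdots\prec e_p$, defines for each $x\in V(C)$ the set $X_x$ of edges ``hugging'' $x$ (those $e$ with $\min e<x<\max e$), shows each $X_x$ is a $\prec$-interval of odd length, and colors $x$ by the parity of the index of the first edge in $X_x$; your $\chi(v)$ counts the edges strictly to the left of $v$, which is exactly that first index minus one, so your coloring is the paper's with the two colors swapped. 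Where the paper verifies properness by arguing that the hugging intervals of two $C$-adjacent vertices are disjoint and abut in the $\prec$ order, you instead fix a $C$-edge $\{v,v'\}$, partition $V(P)$ into $L$, $M$, $R$, and count $L\leftrightarrow M$ transitions of the walk from $1$ to $n$ directly. Your route avoids introducing the edge order $\prec$ and is a bit more self-contained; the paper's route yields slightly more structural information (the hugging sets are intervals). The core idea---parity of $P$-crossings distinguishes $C$-neighbors---is the same in both.
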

\begin{proof}
Let a nest-free graph $H$ satisfy the assumptions of the lemma and let $E(P)$ denote the set of edges of the path $P$. Consider an order relation $\prec$ on $E(P)$ defined as follows: for each pair of distinct edges $e,f\in E(P)$,  $e\prec f$ if $\min e\leqslant \min f$ and $\max e\leqslant \max f$. Since $P$ is nest-free, this relation is a~linear order and one can enumerate the~edges of $P$ accordingly to that order as $$E(P)=\{e_1\prec e_2\prec \cdots \prec e_p\}.$$ Let $\pi(e_i)\equiv i \pmod 2$, $i=1,\dots,p:=|E(P)|$, be the parity of edge $e_i$.

Let $V(C)=\{x_1,\dots,x_q\}$ be the vertex set of the cycle $C$, where the vertices are numbered in the order they are traversed by $C$ (we fix one of the two possible directions). We say that a vertex $x$ of $C$ is \emph{hugged} by an edge $e$ of $P$ if $\min e<x<\max e$. For each $i=1,\dots,q $, let $X_i=\{e\in P:\text{$x_i$ is hugged by}\; e\}$ and set $r_i=|X_i|$.

Observe that for each $i=1,\dots,q$ the number $r_i$ is odd, because path $P$, on the way from 1 to $n$, possibly zigzagging, ``passes'' each vertex of $C$ an odd number of times. Moreover, by the definition of hugging, each set $X_i$ consists of a block of consecutive edges of $P$  under $\prec$. Finally, by the nest-freeness of $H$, for each $i=1,\dots,q-1$, the sets $X_i$ and $X_{i+1}$ are disjoint and positioned ``side-to-side'', that is, $X_{i+1}$ either directly follows or directly precedes  $X_i$.

Thus, we may define a 2-coloring of $V(C)$ by assigning to $x_i\in V(C)$, $i=1,\dots,q$, color $\pi(e)$, where $e$ is the first (or the last) edge in $X_i$. Clearly, adjacent vertices on $C$ receive different colors, which implies that $q$ must be even. The proof is illustrated in Fig.~\ref{lem}.
\end{proof}

\begin{figure}[h]
\centering
\begin{tikzpicture}[scale=.9]
\coordinate(P1) at (0,0);
\coordinate(P2) at (3,0);
\coordinate(P3) at (6,0);
\coordinate(P4) at (9,0);
\coordinate(P5) at (7,1.3);
\coordinate(P6) at (4,1.3);
\coordinate(P7) at (7.5,2.6);
\coordinate(P8) at (10.5,0);

\coordinate(Q1) at (1.5,-1.3);
\coordinate(Q2) at (5,-1.3);
\coordinate(Q3) at (8,-1.3);
\coordinate(Q4) at (4.5,-2.6);

\foreach \Q in {Q1,Q2,Q3,Q4}
  {\draw[densely dashed, thin] (\Q |- 0,-3.5) -- (\Q |- 0,3.5);}
 
\draw[-, bend left=60] (P1) to node[midway,above]{$e_1$} (P2);
\draw[-, bend left=60] (P2) to node[midway,above]{$e_2$} (P3);
\draw[-, bend left=60] (P3) to node[midway,above]{$e_3$} (P4);
\draw[-, bend left=60] (P5) to node[midway,above]{$e_4$} (P4);
\draw[-, bend left=60] (P6) to node[midway,above]{$e_5$} (P5);
\draw[-, bend left=60] (P6) to node[midway,above]{$e_6$} (P7);
\draw[-, bend left=60] (P7) to node[midway,above]{$e_7$} (P8);
 
\draw[-, bend right=60] (Q1) to (Q2);
\draw[-, bend right=60] (Q2) to (Q3);
\draw[-, bend left=60] (Q3) to (Q4);
\draw[-, bend left=60] (Q4) to (Q1);

\foreach \P in {P1, P2, P3, P4, P5, P6, P7, P8}
 {\fill [fill=white, draw=black, line width=0.5pt] (\P) circle (3pt);};
 
\foreach \P in {Q1, Q2, Q3, Q4}
 {\fill [fill=gray, draw=black, line width=0.5pt] (\P) circle (3pt);};
 
\node[right] at (Q1) {$x_1$};
\node[right] at (Q2) {$x_2$};
\node[right] at (Q3) {$x_3$};
\node[left] at (Q4) {$x_4$};

\draw (15,-1) circle (1.5);
\coordinate(O1) at (15,0.5);
\coordinate(O2) at (16.5,-1);
\coordinate(O3) at (15,-2.5);
\coordinate(O4) at (13.5,-1);

\node[below] at (O1) {$x_1$};
\node[left] at (O2) {$x_2$};
\node[above] at (O3) {$x_3$};
\node[right] at (O4) {$x_4$};

\node[draw, rectangle, thick, inner sep=2pt] at (12.4,-1) {$\textcolor{blue}{e_2}\ \textcolor{red}{e_3}\ \textcolor{blue}{e_4}$};
\node[draw, rectangle, thick, inner sep=2pt] at (17.6,-1) {$\textcolor{blue}{e_2}\ \textcolor{red}{e_3}\ \textcolor{blue}{e_4}$};
\node[draw, rectangle, thick, inner sep=2pt] at (15,1) {$\textcolor{red}{e_1}$};
\node[draw, rectangle, thick, inner sep=2pt] at (15,-3) {$\textcolor{red}{e_5}\ \textcolor{blue}{e_6}\ \textcolor{red}{e_7}$};

\node[draw, rectangle, thick, inner sep=2pt] at (14.5,2.5) {$\textcolor{blue}{e_2}\ \textcolor{red}{e_3}\ \textcolor{blue}{e_4}$};
\node[draw, rectangle, thick, inner sep=2pt] at (13,2.5) {$\textcolor{red}{e_1}$};
\node[draw, rectangle, thick, inner sep=2pt] at (16.5,2.5) {$\textcolor{red}{e_5}\ \textcolor{blue}{e_6}\ \textcolor{red}{e_7}$};

\node at (13,3) {$x_1$};
\node at (14.5,3) {$x_2\ x_4$};
\node at (16.5,3) {$x_3$};

\foreach \P in {O1,O3}
 {\fill [fill=red, draw=black, line width=0.5pt] (\P) circle (3pt);};
 \foreach \P in {O2,O4}
 {\fill [fill=blue, draw=black, line width=0.5pt] (\P) circle (3pt);};
\end{tikzpicture}
\caption{An illustration to the proof of Lemma~\ref{Lemma Path-Cycle} ($p=7$, $q=4$). We have $\pi(e_i)$=1 (\textcolor{red}{red}) for $i$ odd and $\pi(e_i)$=0 (\textcolor{blue}{blue}) for $i$ even.}  \label{lem}
\end{figure}

Using this lemma we may easily prove our main result.

\begin{proof}[Proof of Theorem \ref{abba}] 
Suppose on the contrary that $W$ is a shuffle square and let $G=G_0\cup G_1$ be an ordered nest-free graph representing $W$ as in Proposition~\ref{Proposition Characterization}, where $G_0$ and $G_1$ are the~subgraphs of $G$ corresponding, respectively, to $\mathtt{0}$-runs and $\mathtt{1}$-runs. Thus, the degree sequence of $G_1$ is $(a_1,\ldots,a_{n+1})$ and the degree sequence of $G_0$ is $(b_1,\ldots,b_n)$. Since the latter sequence  does not split into two subsequences with the same sum, the graph $G_0$ is not bipartite, and so it contains an odd cycle $C$. On the other hand, in $G_1$ the two unique vertices of odd degree must belong to the same connected component and, therefore, $G_1$ contains a path $P$ whose ends are the first and the last vertex. It follows by Lemma \ref{Lemma Path-Cycle} that $G\supseteq H=C\cup P$ is not nest-free, a contradiction.
\end{proof}

\begin{corollary}\label{abba-cor}
Let $r\geqslant1$ and $m\geqslant1$ be  integers, and let $(a_1,\dots,a_{2m})$ be positive integers such that $a_1,a_{2m}$ are odd, while $a_2,\dots,a_{2m-1}$ are even. Then the word  $$W=\mathtt{1}^{a_1}\mathtt{0}^{r}\mathtt{1}^{a_2}\mathtt{0}^{r}\mathtt{1}^{a_3}\cdots\mathtt{1}^{a_{2m-1}}\mathtt{0}^{r}\mathtt{1}^{2m}$$ is not a shuffle square. In particular, the word $(\mathtt{1}\mathtt{0}^r\mathtt1)^{2m-1}$ is not a~shuffle square.
\end{corollary}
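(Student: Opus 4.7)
The plan is to obtain the corollary as a direct specialization of Theorem~\ref{abba}, taking $n = 2m-1$. Condition~(1) of that theorem matches the hypothesis on the exponents verbatim: $a_1$ and $a_{n+1}=a_{2m}$ are odd while $a_2,\ldots,a_n=a_{2m-1}$ are even. For condition~(2), I would observe that $(b_1,\ldots,b_{2m-1})=(r,r,\ldots,r)$, so any split into subsets $I\subset[2m-1]$ and its complement produces sums $|I|r$ and $(2m-1-|I|)r$. Equality of these would force $2|I|=2m-1$, which is impossible because $2m-1$ is odd. Hence Theorem~\ref{abba} applies and $W$ is not a shuffle square.

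For the \emph{in particular} clause, the task reduces to verifying that $(\mathtt1\mathtt0^r\mathtt1)^{2m-1}$ fits the template of $W$ with the appropriate exponents $a_i$. I would compute the run-length decomposition by noting that when $2m-1$ copies of the block $\mathtt1\mathtt0^r\mathtt1$ are concatenated, at each of the $2m-2$ block boundaries the trailing $\mathtt1$ of one block fuses with the leading $\mathtt1$ of the next to form a $\mathtt1$-run of length $2$, while the very first and very last $\mathtt1$'s remain runs of length $1$. The $\mathtt0$-runs (one per block) are all of length $r$. The resulting pattern
$$\mathtt1\,\mathtt0^r\,\mathtt1^2\,\mathtt0^r\,\mathtt1^2\,\cdots\,\mathtt0^r\,\mathtt1^2\,\mathtt0^r\,\mathtt1$$
has $2m$ $\mathtt1$-runs with $a_1=a_{2m}=1$ (odd) and $a_2=\cdots=a_{2m-1}=2$ (even), which is exactly the shape handled by the first part of the corollary.

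I do not anticipate any serious obstacle: the whole proof is bookkeeping. The only place demanding a moment of care is the counting of runs in the concatenation, where one must recognize that fusing adjacent $\mathtt1$'s reduces the number of $\mathtt1$-runs from $2(2m-1)$ to $2m$, and that the two extremal runs are the ones of odd length.
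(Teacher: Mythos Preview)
Your proof is correct and follows essentially the same approach as the paper: apply Theorem~\ref{abba} with $n=2m-1$, check that the constant sequence $(r,\dots,r)$ of odd length cannot be halved, and then specialize to $a_1=a_{2m}=1$, $a_2=\cdots=a_{2m-1}=2$ for the ``in particular'' clause. Your version is simply more explicit about the run-length bookkeeping for $(\mathtt1\mathtt0^r\mathtt1)^{2m-1}$, which the paper leaves implicit.
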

\begin{proof}  The sequence $(a_1,\dots,a_{2m})$ satisfies condition (1) of Theorem~\ref{abba}. Moreover, sequence $$(\underset{2m-1}{\underbrace{r,r,\dots,r}})$$ has an odd number of identical terms and thus satisfies condition (2) therein.
Hence, the main statement follows from Theorem~\ref{abba}. Setting $a_1=a_{2m}=1$  and $a_2=\cdots =a_{2m-1}=2$, we obtain the special case.
\end{proof}
Note that for $r=2$ we obtain the special class of odd $ABBA$s.


\section{Deletion distance to shuffle squares}

Given a~word $W$ of length $n$, let $f(W)$ be the length of the longest twins in $W$ and set $$g(W)=n-2f(W).$$ Further, let $g(n)=\max g(W)$ over all $W$, with $|W|=n$. By a~sophisticated approach via a~regularity lemma for words, Axenovich, Person, and Puzynina \cite{Axenovich-Person-Puzynina} showed that $g(n)=o(n)$ and, by providing a construction, that $g(n)=\Omega(\log n)$. The~lower bound was improved by Bukh \cite{Bukh} to $g(n)=\Omega(n^{1/3})$ by refining the construction from \cite{Axenovich-Person-Puzynina} (see \cite{BR} for a~proof). In a~personal communication Bukh has expressed his belief that, in fact, $g(n)=\Omega(\sqrt n)$ and that the real challenge would be to improve the upper bound, where one would need to refine the regularity approach from \cite{Axenovich-Person-Puzynina}. Focusing on the lower bound, in \cite{BR} we shared that belief and implicitly stated Conjecture~\ref{odd} which, in our notation says that  $g(n)=\Omega(\sqrt n)$.

We first describe a general class of words whose two particular instances yielded lower bounds on $g(n)$ in \cite{Axenovich-Person-Puzynina} and \cite{Bukh}. For positive integers $n_1,\dots,n_r$, let $$W(n_1,\dots,n_r)=U_1\cdots U_r$$ be the binary word whose consecutive runs $U_i$  have lengths $|U_i|=n_i$, $i=1,\dots,r$. We adopt the convention that $U_1$ is a $\mathtt1$-run. The~main features of the words used in \cite{Axenovich-Person-Puzynina} and \cite{Bukh} were that the runs rapidly decreased in length and that their lengths were odd.

An easy generalization of these two special words led to an observation that if $$n_1>n_2>\cdots >n_r$$ are all odd, then \begin{equation}\label{g}
g(W(n_1,\dots,n_r))\geqslant\min\{r,\delta\},
 \end{equation}
 where $\delta$ is the smallest difference between two consecutive terms (see \cite[Lemma 3.1]{BR}). The~idea of the proof is that if monotone twins take equally from each run, then they ``lose'' at least one element per run; otherwise, the leading twin is ahead at some point and the follower has to ``jump'' over the second next run, a loss that can never  be made up, due to the run lengths getting smaller and smaller.

The particular words used in \cite{Axenovich-Person-Puzynina} and \cite{Bukh} were $$A_r:=W(3^{r-1},\,3^{r-2},\,\dots,\,3^2,\,3,\,1)$$ and $$B_r:=W(r^2+1,\,(r-1)r+1,\,\dots,\, 2r+1,\,r+1),$$ respectively. For both, the minimum in~\eqref{g} is $r$.  Simple
calculations  reveal that for $A_r$ we have $r=\Theta(\log|A_r|)$ and for $B_r$, $r=\Theta(|B_r|^{1/3})$, both leading to the desired bounds. (For $n$ not expressible as $|A_r|$ or $|B_r|$ for any $r$ an obvious extrapolation has been applied.)

In view of the constructions from \cite{Axenovich-Person-Puzynina} and \cite{Bukh}  a natural candidate to
facilitate Conjecture~\ref{odd} seemed to be, for an odd integer $m$ and $r\leqslant(m+1)/2$, a binary word $$O_{m,r}:=W(m,\,m-2,\,m-4,\,\dots,\,m-2r+2)$$ consisting of $r$ \emph{consecutive} odd numbers. Indeed,
$$n:=|O_{m,r}|=r(m-r+1)=\Theta(m^2)$$ for $r=\Theta(m)$ and thus, showing that $g(O_{m,r})=\Omega(m)$ would do the job. However, inequality~\eqref{g} yields only $g(O_{m,r})\geqslant 2$, so a new approach would be needed.

 Below we show that, quite surprisingly,
 independently of $m$ and $r$, $O_{m,r}$  is, in fact, very close  to a shuffle square even for large $m$.

\begin{proposition}\label{close}
For every $m\geqslant 2r-1$, $m$ odd, the word $O_{m,r}$ contains twins of total length at least $|O_{m,r}|-23$, that is, $g(O_{m,r})\leqslant 23$.
\end{proposition}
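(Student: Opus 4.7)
The plan is to invoke the graph representation of Proposition~\ref{Proposition Characterization} and to construct an explicit ordered nest-free multigraph $G$ on the vertex set $\{u_1,\ldots,u_r\}$ (one vertex per run of $O_{m,r}$), with edges occurring only within the $\mathtt{1}$-group $\{u_i:i\text{ odd}\}$ or the $\mathtt{0}$-group $\{u_i:i\text{ even}\}$, whose degrees satisfy
\[
\sum_{i=1}^r\bigl(c_i-\deg_G(u_i)\bigr)\le 23, \qquad c_i:=m-2(i-1).
\]
By the constructive part of Proposition~\ref{Proposition Characterization}, such a $G$ yields canonical twins $(X,Y)$ in $O_{m,r}$ of double length at least $|O_{m,r}|-23$, exactly as required.

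The heart of the construction is a ``parallel-paths'' layout. In the $\mathtt{1}$-group I would place a path $u_1-u_3-\cdots-u_{2s-1}$ (where $s=\lceil r/2\rceil$) made of multi-edges with multiplicities determined by solving the interior degree equations $m_{i-2}+m_i=c_{2i-1}$ backwards from the last $\mathtt 1$-run. A symmetric path is placed in the $\mathtt{0}$-group. The crucial geometric observation is that a $\mathtt{1}$-group edge $\{2i-1,2i+1\}$ and a $\mathtt{0}$-group edge $\{2j,2j+2\}$ are always either disjoint or \emph{cross}, but never nest, so the combined graph is automatically nest-free. Loops are then placed only at the ``free'' endpoints of each path (those where no edge of the other group spans them); interior loops would nest with the other group's adjacent edge and so are forbidden. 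Solving the recurrences gives an alternating pattern in which the multiplicities are built from small constants $4,8,12,\ldots$ interleaved with ``large'' values of order $c_s+4k$, all of them nonnegative precisely because $m\ge 2r-1$.

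With this layout, every \emph{interior} vertex has degree exactly equal to its capacity, contributing $0$ to the loss. The losses are therefore confined to a bounded number of boundary positions in each group: (i) at the ``closed'' end of each path, where an edge multiplicity is capped by the smaller of the two neighboring capacities (a gap of $c_i-c_{i+2}=4$); and (ii) at vertices where the forced parity of the degree does not match the odd capacity (a parity gap of $1$). A case analysis on $r\bmod 4$ (which fixes the parities of $s$ and $t=\lfloor r/2\rfloor$, and hence which endpoint of which path is the ``loopable'' one) handles the situation, sometimes requiring one path-edge to be deleted and replaced by a loop in order to unblock a needed loop in the opposite group. Summing the finitely many boundary contributions across all cases yields a constant bound; careful bookkeeping pins this constant down to $23$, independent of $m$ and $r$.

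The main obstacle is the case analysis and the verification of the exact numerical bound. Each case requires choosing, among a few feasible ways of ``breaking'' the paths near the boundary, the one that minimises the cumulative loss; the hypothesis $m\ge 2r-1$ is essential for ensuring that all adjusted multiplicities remain non-negative, so that no additional cascading loss is triggered along the bulk of the paths. Once the worst case has been pinned down, the bound $g(O_{m,r})\le 23$ follows by simply summing the constantly many $4$'s and $1$'s contributed at the boundaries.
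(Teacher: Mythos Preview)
Your ``parallel paths'' layout has a genuine gap: the $\mathtt0$-group cannot be handled with bounded loss using only nearest-neighbour edges. The vertex $u_2$ can never carry a loop (any loop there nests with the $\mathtt1$-edge $\{u_1,u_3\}$), so its entire capacity $c_2=m-2$ must be supplied by the single edge $\{u_2,u_4\}$. But then the interior equation at $u_4$ forces the next multiplicity to be $c_4-c_2=-4<0$. Solving instead from the right end and letting $u_2$ absorb the mismatch does not help either: the recurrence produces multiplicities alternating between $4,8,12,\ldots$ and values near $c_{2t}$, so that $n_1$ comes out as roughly $2t$ or $m-2t$, leaving a deficit at $u_2$ of order $r$, not $O(1)$. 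Your proposed fix of deleting a path-edge near the boundary to ``unblock'' a loop only shifts the obstruction two positions to the right; it recurs at $u_4$, then $u_6$, and so on. The claim that boundary losses are a bounded sum of $4$'s and $1$'s is therefore incorrect.

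The paper's construction is structurally different and avoids this trap. Instead of one path per colour class, it partitions each class into consecutive blocks of four vertices and, within each block, uses \emph{skip-one} edges (joining same-colour vertices at distance $8$ in the word) together with one short edge of fixed multiplicity~$8$. This makes every block self-contained with zero deficit. The $\mathtt0$-blocks are then offset relative to the $\mathtt1$-blocks so that no nesting arises, at the price of two isolated edges at the extremities of the $\mathtt0$-row, each contributing deficit~$8$; up to $7$ initial vertices are handled with loops to make $r$ a multiple of~$8$, giving $7+8+8=23$. Your observation that nearest-neighbour edges of opposite colours never nest is correct but is the easy part; the idea you are missing is precisely the use of these longer, skip-one edges within each colour class.
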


\begin{proof}
Our aim is to find canonical twins in $O_{m,r}$ of double length at least $|O_{m,r}|-23$. To this end, we are going to construct a nest-free graph on $r$ vertices whose \emph{deficit} defined as $$|O_{m,r}|-\sum_{u\in V(G)}\deg_G(v)$$ is at most 23. We may and do assume that $r\geqslant24$, as otherwise we could draw as many as possible loops at every vertex, getting the deficit of $r\leqslant23$. For sheer convenience, we also assume that $r$ is divisible by 8. If not, we add loops at up to the~first seven vertices on, say, the left
end of $O_{m,r}$, procuring  a deficit of at most 7, and build $G$ on the~remaining vertices with deficit at most 16. So, let $r=8k$ for some integer $k\geqslant3$.

As usual, there are two kinds of vertices, say, red and blue, alternating, and there are no red-blue edges whatsoever. Let us identify the~vertices with their capacities, so we have the red set $$V_1=\{m,\,m-4,\,m-8,\,\dots,\,m-2r+4\}$$ and the blue set $$V_2=\{m-2,\,m-6,\,\dots,\,m-2r+2\}.$$

We begin by describing the ``red'' subgraph $G[V_1]$. Let us split $V_1$ into $k$ consecutive 4-tuples $$U_i=\{i,\,i-4,\,i-8,\,i-12\},$$ $i=m,m-16,\dots,m-2r+16$. Each $U_i$ spans the edge sets $$E_i=\{\ \{i,\,i-4\},\; \{i,\,i-8\},\;
\{i-4,\,i-12\}\ \}$$ with edges bearing  multiplicities $\mu(i,i-4)=8$, $\mu(i,i-8)=i-8$, and $\mu(i-4,i-12)=i-12$ (see Fig. \ref{fig19} - the top part). Note that this way the deficit of the red subgraph $G[V_1]$ is zero.
Indeed, for each $i$,
\begin{align*}
\deg_G(i)=&\,8+(i-8)=i,\\
\deg_G(i-4)=&\,8+(i-12)=i-4,\\
\deg_G(i-8)=&\,i-8,
\intertext{ and }
\deg_G(i-12)=&\,i-12.
\end{align*}

\begin{figure}[h]
\centering
\begin{tikzpicture}[scale=.9]
\coordinate (P1) at (0, 2.5);
\coordinate (P2) at (1, 2.5);
\coordinate (P3) at (2, 2.5);
\coordinate (P4) at (3, 2.5);
\coordinate (P5) at (4, 2.5);
\coordinate (P6) at (5, 2.5);
\coordinate (P7) at (6, 2.5);
\coordinate (P8) at (7, 2.5);
\coordinate (P9) at (8, 2.5);
\coordinate (P10) at (9, 2.5);
\coordinate (P11) at (10, 2.5);
\coordinate (P12) at (11, 2.5);
\node at (12,2.5) {$\cdots$};
\coordinate (P13) at (13, 2.5);
\coordinate (P14) at (14, 2.5);
\coordinate (P15) at (15, 2.5);
\coordinate (P16) at (16, 2.5);

\draw[-, bend left=90] (P1) to node[pos=.4,above]{\begin{tiny}$m-8$\end{tiny}} (P3);
\draw[-, bend left=90] (P2) to node[pos=.6,above]{\begin{tiny}$m-12$\end{tiny}} (P4);
\draw[-] (P1) to node[midway,above]{\begin{tiny}$8$\end{tiny}} (P2);

\draw[-, bend left=90] (P5) to (P7);
\draw[-, bend left=90] (P6) to (P8);
\draw[-] (P5) to (P6);

\draw[-, bend left=90] (P9) to (P11);
\draw[-, bend left=90] (P10) to (P12);
\draw[-] (P9) to (P10);

\draw[-, bend left=90] (P13) to (P15);
\draw[-, bend left=90] (P14) to (P16);
\draw[-] (P13) to (P14);

\foreach \P in {P1,P2,P3,P4,P5,P6,P7,P8,P9,P10,P11,P12,P13,P14,P15,P16}
 {\fill [fill=red, draw=black, line width=0.5pt] (\P) circle (2.5pt);}

\node[below=1.5mm] at (P1) {\begin{tiny}$m$\end{tiny}};
\node[below=3mm] at (P2) {\begin{tiny}$m-4$\end{tiny}};
\node[below=1mm] at (P3) {\begin{tiny}$m-8$\end{tiny}};
\node[below=3mm] at (P4) {\begin{tiny}$m-12$\end{tiny}};

\coordinate (Q1) at (.5, 0);
\coordinate (Q2) at (1.5, 0);
\coordinate (Q3) at (2.5, 0);
\coordinate (Q4) at (3.5, 0);
\coordinate (Q5) at (4.5, 0);
\coordinate (Q6) at (5.5, 0);
\coordinate (Q7) at (6.5, 0);
\coordinate (Q8) at (7.5, 0);
\coordinate (Q9) at (8.5, 0);
\coordinate (Q10) at (9.5, 0);
\coordinate (Q11) at (10.5, 0);
\node at (11.5,0) {$\cdots$};
\coordinate (Q12) at (12.5, 0);
\coordinate (Q13) at (13.5, 0);
\coordinate (Q14) at (14.5, 0);
\coordinate (Q15) at (15.5, 0);
\coordinate (Q16) at (16.5, 0);

\coordinate (Q10a) at (11.5,0);
\coordinate (Q10b) at (12.5,0);
\coordinate(Q12a) at (10.5,0);

\draw[-,red,bend left=90] (Q1) to node[pos=.4,above]{\begin{tiny}$m-10$\end{tiny}} (Q3);
\draw[-,bend left=90] (Q2) to node[pos=.8,above]{\begin{tiny}$m-14$\end{tiny}} (Q4);
\draw[-,bend left=90] (Q2) to node[midway,above]{\begin{tiny}$8$\end{tiny}} (Q5);
\draw[-,bend left=90] (Q5) to node[pos=.4,above]{\begin{tiny}$m-26$\end{tiny}} (Q7);
\draw[-,bend left=90] (Q6) to (Q8);
\draw[-,bend left=90] (Q6) to (Q9);
\draw[-,bend left=90] (Q9) to (Q11);
\begin{scope}
\clip (9.5,-.5) rectangle (10.75,2);
\draw[-, bend left=90] (Q10) to (Q10a);
\draw[-, bend left=90] (Q10) to (Q10b);
\end{scope}
\begin{scope}
\clip (12.25,-1) rectangle (13.5,0);
\draw[-, bend right=90] (Q12a) to (Q12);
\draw[-, bend right=90] (Q12a) to (Q13);
\end{scope}
\draw[-,bend left=90] (Q13) to (Q15);
\draw[-,red,bend left=90] (Q14) to node[pos=.6,above]{\begin{tiny}$m-2r+12$\end{tiny}} (Q16);

\foreach \P in {Q1,Q2,Q3,Q4,Q5,Q6,Q7,Q8,Q9,Q10,Q11,Q12,Q13,Q14,Q15,Q16}
 {\fill [line width=0.5pt] (\P) circle (2.5pt);}

\node[below=4mm] at (Q1) {\begin{tiny}$m-2$\end{tiny}};
\node[below=1mm] at (Q2) {\begin{tiny}$m-6$\end{tiny}};
\node[below=4mm] at (Q3) {\begin{tiny}$m-10$\end{tiny}};
\node[below=1mm] at (Q4) {\begin{tiny}$m-14$\end{tiny}};
\node[below=4mm] at (Q5) {\begin{tiny}$m-18$\end{tiny}};
\node[below=1mm] at (Q6) {\begin{tiny}$m-22$\end{tiny}};
\node[below=4mm] at (Q7) {\begin{tiny}$m-26$\end{tiny}};
\node[below=4mm] at (Q16) {\begin{tiny}$m-2r+2$\end{tiny}};

\end{tikzpicture}
\caption{Construction from the proof of Proposition~\ref{close}.}\label{fig19}
\end{figure}

The ``blue'' subgraph is a bit more complicated, as we have to avoid traps set up by the red graph. Nevertheless, its structure is almost the same: it consists of disjoint paths of lengths three, except that now these paths are slightly more stretched out, and, in addition, two isolated edges, one at the beginning and one at
the~end. It is these two edges what raises the deficit by 16. Precisely, we have
  $$\mu(m-2,m-10)=m-10$$ and $$\mu(m-2r+10,m-2r+1)=m-2r+1,$$ while the remaining vertices are broken up into $k-1$ 4-tuples $$W_i=\{i, i-8,i-12, i-20\},$$ $i=m-6,m-22,\dots,m-2r+26$, with edges sets $$E_i=\{\{i,i-8\}, \{i,i-12\}, \{i-12,i-20\}\}$$ and the edges of multiplicities
\begin{align*}
\mu(i,i-8)=&\,i-8,\\
\mu(i,i-12)=&\,8\\
\intertext{and}
\mu(i-12,i-18)=&\,i-18
\end{align*}
(see Fig. \ref{fig19} -- the bottom part). The~subgraphs $G[W_i]$ contribute no deficit, so that the total deficit of $G$ is at most $7+8+8=23$.
\end{proof}

To illustrate the above construction we present in Fig. \ref{fig20} the graph $G$
for the word
$$O_{47,24}=\mathtt1^{47}\mathtt0^{45}\mathtt1^{43}\mathtt0^{41}\mathtt1^{39}\mathtt0^{37}\mathtt1^{35}\mathtt0^{33}\mathtt1^{31}\mathtt0^{29}\mathtt1^{27}\mathtt0^{25}\mathtt1^{23}\mathtt0^{21}\mathtt1^{19}\mathtt0^{17}\mathtt1^{15}\mathtt0^{13}\mathtt1^{11}\mathtt0^{9}\mathtt1^{7}\mathtt0^{5}\mathtt1^{3}\mathtt0^{1}.$$

\begin{figure}[h]
\centering
\begin{tikzpicture}[scale=.95]
\coordinate (P1) at (0, 2);
\coordinate (P2) at (1.2, 2);
\coordinate (P3) at (2.4, 2);
\coordinate (P4) at (3.6, 2);
\coordinate (P5) at (4.8, 2);
\coordinate (P6) at (6, 2);
\coordinate (P7) at (7.2, 2);
\coordinate (P8) at (8.4, 2);
\coordinate (P9) at (9.6, 2);
\coordinate (P10) at (10.8, 2);
\coordinate (P11) at (12, 2);
\coordinate (P12) at (13.2, 2);

\draw[-, bend left=90] (P1) to node[midway,above]{$39$} (P3);
\draw[-, bend left=90] (P2) to node[midway,above]{$35$} (P4);
\draw[-] (P1) to node[pos=.6,above]{$8$} (P2);

\draw[-, bend left=90] (P5) to node[midway,above]{$23$} (P7);
\draw[-, bend left=90] (P6) to node[midway,above]{$19$} (P8);
\draw[-] (P5) to node[pos=.6,above]{$8$} (P6);

\draw[-, bend left=90] (P9) to node[midway,above]{$7$} (P11);
\draw[-, bend left=90] (P10) to node[midway,above]{$3$} (P12);
\draw[-] (P9) to node[pos=.6,above]{$8$} (P10);

\foreach \P in {P1,P2,P3,P4,P5,P6,P7,P8,P9,P10,P11,P12}
 {\fill [fill=red, draw=black, line width=0.5pt] (\P) circle (2.5pt);}

\node[red,below=2mm] at (P1) {$47$};
\node[red,below=2mm] at (P2) {$43$};
\node[red,below=2mm] at (P3) {$39$};
\node[red,below=2mm] at (P4) {$35$};
\node[red,below=2mm] at (P5) {$31$};
\node[red,below=2mm] at (P6) {$27$};
\node[red,below=2mm] at (P7) {$23$};
\node[red,below=2mm] at (P8) {$19$};
\node[red,below=2mm] at (P9) {$15$};
\node[red,below=2mm] at (P10) {$11$};
\node[red,below=2mm] at (P11) {$7$};
\node[red,below=2mm] at (P12) {$3$};

\coordinate (Q1) at (.6, .5);
\coordinate (Q2) at (1.8, .5);
\coordinate (Q3) at (3, .5);
\coordinate (Q4) at (4.2, .5);
\coordinate (Q5) at (5.4, .5);
\coordinate (Q6) at (6.6, .5);
\coordinate (Q7) at (7.8, .5);
\coordinate (Q8) at (9, .5);
\coordinate (Q9) at (10.2, .5);
\coordinate (Q10) at (11.4, .5);
\coordinate (Q11) at (12.6, .5);
\coordinate (Q12) at (13.8, .5);

\coordinate(81) at (.6,-1.2);
\coordinate(82) at (11.4,-1.2);

\node at (81) {$(-8)$};
\node at (82) {$(-8)$};
\draw[->, shorten >=2mm, shorten <=3mm] (81) -- (Q1);
\draw[->, shorten >=2mm, shorten <=3mm] (82) -- (Q10);

\draw[-,red,bend right=90] (Q1) to node[pos=.4,below]{$37$} (Q3);
\draw[-,bend right=90] (Q2) to node[pos=.6,above]{$33$} (Q4);
\draw[-,bend right=90] (Q2) to node[midway,below]{$8$} (Q5);
\draw[-,bend right=90] (Q5) to node[pos=.4,below]{$21$} (Q7);
\draw[-,bend right=90] (Q6) to node[pos=.6,above]{$17$} (Q8);
\draw[-,bend right=90] (Q6) to node[midway,below]{$8$} (Q9);
\draw[-,bend right=90] (Q9) to node[pos=.4,below]{$5$} (Q11);
\draw[-,red,bend right=90] (Q10) to node[pos=.6,below]{$1$} (Q12);

\foreach \P in {Q1,Q2,Q3,Q4,Q5,Q6,Q7,Q8,Q9,Q10,Q11,Q12}
 {\fill [line width=0.5pt] (\P) circle (2.5pt);}

\node[above=2mm] at (Q1) {$45$};
\node[above=2mm] at (Q2) {$41$};
\node[above=2mm] at (Q3) {$37$};
\node[above=2mm] at (Q4) {$33$};
\node[above=2mm] at (Q5) {$29$};
\node[above=2mm] at (Q6) {$25$};
\node[above=2mm] at (Q7) {$21$};
\node[above=2mm] at (Q8) {$17$};
\node[above=2mm] at (Q9) {$13$};
\node[above=2mm] at (Q10) {$9$};
\node[above=2mm] at (Q11) {$5$};
\node[above=2mm] at (Q12) {$1$};

\end{tikzpicture}
\caption{The graphic representation of the word $O_{47,24}$.}\label{fig20}
\end{figure}
The corresponding pair of canonical twins  of double length $24^2-16$
looks like this:
\begin{small}$$\underline{\textcolor{red}{\mathtt1^{47}}}\underline{\textcolor{red}{\mathtt0^{37}}}\mathtt0^8\underline{\textcolor{red}{\mathtt1^{35}}}\overline{\textcolor{blue}{\mathtt1^8}}
\underline{\textcolor{red}{\mathtt0^{41}}}
\overline{\textcolor{blue}{\mathtt1^{39}}}\overline{\textcolor{blue}{\mathtt0^{37}}}\overline{\textcolor{blue}{\mathtt1^{35}}}\overline{\textcolor{blue}{\mathtt0^{33}}}
\underline{\textcolor{red}{\mathtt1^{31}}}\underline{\textcolor{red}{\mathtt0^{21}}}\overline{\textcolor{blue}{\mathtt0^8}}\underline{\textcolor{red}{\mathtt1^{19}}}
\mathtt1^8\underline{\textcolor{red}{\mathtt0^{25}}}\overline{\textcolor{blue}{\mathtt1^{23}}}\overline{\textcolor{blue}{\mathtt0^{21}}}\overline{\textcolor{blue}{\mathtt1^{19}}}
\overline{\textcolor{blue}{\mathtt0^{17}}}
\underline{\textcolor{red}{\mathtt1^{15}}}\underline{\textcolor{red}{\mathtt0^{5}}}\mathtt0^8\underline{\textcolor{red}{\mathtt1^{3}}}
\overline{\textcolor{blue}{\mathtt1^8}}\underline{\textcolor{red}{\mathtt0^1}}\mathtt0^{8}\overline{\textcolor{blue}{\mathtt1^{7}}}\overline{\textcolor{blue}{\mathtt0^{5}}}
\overline{\textcolor{blue}{\mathtt1^{3}}}\overline{\textcolor{blue}{\mathtt0^{1}}},$$\end{small}
  with each single twin equal to
$$\mathtt1^{47}\mathtt0^{37}\mathtt1^{35}\mathtt0^{41}\mathtt1^{31}\mathtt0^{21}\mathtt1^{19}\mathtt0^{25}\mathtt1^{15}\mathtt0^5\mathtt1^3\mathtt0^1.$$
As you may see, the only gaps, each of size 8, appear in the second
run and the fifth one from the end. It would not be so easy to get these
twins without using graphs.

Finally, we complement inequality~\eqref{g} with a weaker statement which, however, applies to a broader family of words. It provides another sufficient condition for a word not to be a shuffle square, and so it shares the spirit of our main result, Theorem~\ref{abba}.
 In particular, it implies that the word $O_{m,r}$ defined above is not a shuffle square.

\begin{claim}\label{cl} Let $W=W(n_1,n_2,\dots,n_r)$. If $n_1$ is odd and
$n_2>\cdots> n_r$  form a decreasing sequence (but are not necessarily
odd), then $W$ is not a shuffle square.
\end{claim}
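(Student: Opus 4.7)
The approach is via the graphic characterization in Proposition~\ref{Proposition Characterization}. Assume for contradiction that $W$ is a shuffle square, so there is an ordered nest-free graph $G=G_{\mathtt{1}}\cup G_{\mathtt{0}}$ on $\{u_1,\ldots,u_r\}$ with $\deg_G(u_i)=n_i$. Because $n_1$ is odd, the vertex $u_1$ carries odd degree and hence must emit at least one non-loop edge; being in $G_{\mathtt{1}}$, such an edge reaches some $u_J$ with $J\geq 3$ odd, and one chooses $J$ to be the largest such neighbor of $u_1$.

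The critical observation is that any edge of $G$ with both endpoints strictly between $u_1$ and $u_J$ would nest inside $\{u_1,u_J\}$ and is forbidden. Consequently, the $\mathtt{0}$-vertices $u_2,u_4,\ldots,u_{J-1}$ cannot carry loops or edges to one another, so every edge they emit lands on a $\mathtt{0}$-vertex of index greater than $J$ (and if $J=r$ there is no such vertex, an immediate contradiction). Balancing openings and closings inside $G_{\mathtt{0}}$ then forces the capacity inequality
\[
\sum_{i>J,\ i\text{ even}} n_i\ \geq\ \sum_{2\leq i<J,\ i\text{ even}} n_i.
\]
Using the strict decrease $n_2>n_3>\cdots>n_r$, one pairs the $k$-th left-hand term $n_{J+2k-1}$ with the $k$-th right-hand term $n_{2k}$; since $J\geq 3$ gives $J+2k-1>2k$ and hence $n_{J+2k-1}<n_{2k}$, the inequality above fails whenever the left side has no more terms than the right, which covers the regime $r\leq 2J-1$.

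The remaining regime $r\geq 2J$ is the main obstacle, since the capacity inequality above can hold there. The plan for it is to push the structural analysis further: by nest-freeness the $\mathtt{0}$-edges already identified (from $u_2,u_4,\ldots,u_{J-1}$ to vertices past $u_J$) are themselves arranged in a FIFO fashion, which pins down a substantial portion of the left-degrees of the receiving $\mathtt{0}$-vertices; meanwhile the $\mathtt{1}$-vertices $u_3,u_5,\ldots,u_{J-2}$ lying between $u_1$ and $u_J$ are similarly constrained, their edges being forced to reach either $u_1$, $u_J$, or vertices past $u_J$. Because the $n_i$ decrease strictly, these constraints propagate, and an iterative application of the same capacity-plus-nest analysis to the sub-problem on $\{u_{J+1},\ldots,u_r\}$ (now with pre-allocated incoming degrees) either eventually falls back into the easy regime above or exposes a vertex whose remaining capacity is short of its required degree. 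The technically heaviest step will be the bookkeeping for this iterative reduction---tracking the crossing structure of the $G_{\mathtt{0}}$-edges from the pre-$J$ side against the limited capacities of the far-right small-$n_i$ vertices.
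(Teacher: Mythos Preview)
Your argument is incomplete: the case $r\geq 2J$ is only sketched, and the ``iterative reduction'' you outline is not carried out. You yourself flag this (``the technically heaviest step will be the bookkeeping\ldots''), so as written the proof does not close. It is also not clear that the iteration terminates cleanly: once you pass to the sub-problem on $\{u_{J+1},\ldots,u_r\}$ with pre-allocated incoming degrees, the residual capacities no longer form a strictly decreasing sequence in any obvious way, so you cannot simply re-invoke the hypothesis.

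The paper sidesteps all of this with a short extremal argument. Rather than fixing $J$ as the largest neighbor of $u_1$, one observes (as you do) that $\deg_G^{(\leftarrow)}(u_2)=0$, and then takes $u_h$ to be the \emph{rightmost} vertex with $\deg_G^{(\leftarrow)}(u_h)=0$. Since $h\geq 2$ and all of $u_h$'s edges go strictly right, they cannot all land on a single vertex $u_i$ (else $\deg_G(u_i)\geq n_h>n_i$). So $u_h$ has at least two distinct right-neighbors $u_i,u_j$ with $h<i<j$. Now look at $u_{i+1}$: a loop there nests inside $\{u_h,u_j\}$; an edge $\{u_k,u_{i+1}\}$ with $k<h$ nests around $\{u_h,u_i\}$; and one with $h<k<i+1$ nests inside $\{u_h,u_j\}$. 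Hence $\deg_G^{(\leftarrow)}(u_{i+1})=0$ with $i+1>h$, contradicting the choice of $h$. This avoids the capacity inequality and the case split on $r$ versus $2J$ entirely.
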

\begin{proof}
We will show that a graph corresponding to canonical perfect twins does not exists. Such a graph if exists has vertices $u_1,\dots,u_r$ with degrees $\deg_G(u_i)=n_i$ and is nest-free. Since $n_1$ is odd, there must be an edge from $u_1$ to the right (i.e., not just loops at $u_1$). This implies that there are no loops at $u_2$ and, thus, $\deg_G^{(\leftarrow)}(u_2)=0$ (see Fig. \ref{fig18} - the left side). Let $u_h$ be the~rightmost vertex with $\deg_G^{(\leftarrow)}(u_h)=0$. To have $\deg_G(u_h)=n_h$, vertex $u_h$ must send edges to at least two vertices to the right, say, $u_i$ and $u_j$, $h<i<j$, all indices with the same parity. But then, owing to the nestlessness of $G$, we have $\deg_G^{(\leftarrow)}(u_{i+1})=0$, a~contradiction with the definition of $u_h$ (see Fig. \ref{fig18} - the right side).

\begin{figure}[h]
\centering
\begin{tikzpicture}

\coordinate (U1) at (0,2);
\coordinate (U2) at (1,0);
\coordinate (U3) at (2,2);
\coordinate (U4) at (3,0);
\coordinate (Uh) at (6,0);

\coordinate(U2+) at (5,0);
\coordinate (U3+) at (4,2);
\coordinate (Uh-) at (2,0);

\node at (4.5,0) {$\cdots$};
\node at (7,0) {$\cdots$};
\node at (9,0) {$\cdots$};
\node at (11,0) {$\cdots$};

\node at (4.5,2) {$\cdots$};
\node at (11,2) {$\cdots$};

\coordinate (Ui) at (8,0);
\coordinate (Ui+1) at (9,2);
\coordinate (Uj) at (10,0);

\coordinate (Ui+1-) at (5,2);
\coordinate (Ui+1+) at (13,2);

\foreach \P in {U1,U2,U3,U4,Uh,Ui,Ui+1,Uj}
 {\fill [line width=0.5pt] (\P) circle (2.5pt);}

\draw[-] (U1) to[in=50,out=130, distance=1.5cm] (U1);
\draw[-] (U2) to[in=50,out=130, distance=1.5cm] (U2);
\draw[-] (Uh) to[in=50,out=130, distance=1.5cm] (Uh);
\draw[-] (Ui+1) to[in=50,out=130, distance=1.5cm] (Ui+1);

\draw[-, bend left=30] (Uh) to (Ui);
\draw[-, bend left=30] (Uh) to (Uj);

\begin{scope}
\clip (1,-.5) rectangle (2,.5);
\draw[-, bend left=30] (U2) to (U2+);
\end{scope}
\begin{scope}
\clip (5,-.5) rectangle (6,.5);
\draw[-, bend left=30] (Uh-) to (Uh);
\end{scope}
\begin{scope}
\clip (0,1.5) rectangle (1,2.5);
\draw[-, bend left=30] (U1) to (U3+);
\end{scope}
\begin{scope}
\clip (8,1.5) rectangle (10,2.5);
\draw[-, bend left=30] (Ui+1-) to (Ui+1) to (Ui+1+);
\end{scope}

\node[below=2mm] at (U1) {$u_1$};
\node[below=2mm] at (U2) {$u_2$};
\node[below=2mm] at (U3) {$u_3$};
\node[below=2mm] at (U4) {$u_4$};
\node[below=2mm] at (Uh) {$u_h$};
\node[below=2mm] at (Ui) {$u_i$};
\node[below=2mm] at (Uj) {$u_j$};
\node[below=2mm] at (Ui+1) {$u_{i+1}$};

\draw[red, line width=.5pt] (8.6,2.3) -- (9.4,2.7);
\draw[red, line width=.5pt] (8.6,2.7) -- (9.4,2.3);
\draw[red, line width=.5pt] (.6,.3) -- (1.4,.7);
\draw[red, line width=.5pt] (.6,.7) -- (1.4,.3);
\draw[red, line width=.5pt] (5.6,.3) -- (6.4,.7);
\draw[red, line width=.5pt] (5.6,.7) -- (6.4,.3);

\draw[red, line width=.5pt] (5.1,0) -- (5.5,.6);
\draw[red, line width=.5pt] (5.1,.6) -- (5.5,0);
\draw[red, line width=.5pt] (8.1,2) -- (8.5,2.6);
\draw[red, line width=.5pt] (8.1,2.6) -- (8.5,2);

\end{tikzpicture}
\caption{An illustration to the proof of Lemma~\ref{cl}.}\label{fig18}
\end{figure}
\end{proof}

\begin{example}\rm
Consider even words $\mathtt{10}^9\mathtt1^7\mathtt0^5\mathtt1^4\mathtt0^3\mathtt1^2\mathtt0$ and   $\mathtt1^{11}\mathtt0^8\mathtt1^6\mathtt0^4\mathtt1^3$. As both satisfy the~assumptions of Claim~\ref{cl}, none of them is a shuffle square.
\end{example}

\section{Open Problems}

We conclude the paper with some directions and open problem for future consideration.

\subsection{Doubly binary words}
Call a binary word $W=U_1\cdots U_m$ \emph{doubly binary} if the lengths of runs, $|U_1|,\dots, |U_m|$, also form a binary sequence (over alphabet $\{1,2\}$). There exist many examples of such words, one notable instance being the famous \emph{Thue-Morse word},
$$T=\mathtt{01101001100101101001011001101001}...,$$
which is an infinite concatenation of two blocks, $A=\mathtt{0110}$ and $B=\mathtt{1001}$, in the order designated by the sequence $T$ itself, that is, by the sequence $$ABBABAABBAABABBA...\ .$$ Note that the only prefixes of $T$ which are even words are those of length $4k$ for $k\ge1$.
It turns out that for $k\geqslant 3$ they are actually shuffle squares. We leave an easy proof to the interested reader, as a kind of exercise in applying the graphic method introduced in this paper. (As a hint: use induction on $k$ and consider two subcases  w/r to whether the $k$-th block is identical to the $(k-1)$-st block, or not.)

Another self-defining doubly binary sequence is the mysterious Kolakoski word (\cite{Kola}, see also  "Sequence A000002'' in OEIS) which is equal to the sequence of the lengths of its runs:

$$K=\mathtt{122112122122112112212112122112112...}$$
 The prefixes of $K$ which are even words are rather rare and finding shuffle squares among them may be more challenging.
 The first two even prefixes of $K$ are
 $$\mathtt{12211212}\quad\mbox{and}\quad\mathtt{1221121221221121},$$
  and neither is a shuffle square (the largest twins in either prefix leave out each just two gaps).

 \begin{question} Is there at least one prefix of the Kolakoski sequence which is a shuffle square?
\end{question}

 The most ambitious goal in this direction is the following.

\begin{problem} Characterize all doubly binary even words which are \emph{not} shuffle squares.
\end{problem}

\subsection{Deletion distance to shuffle squares}

It would be nice to know more about the~maximum deletion distance of a binary word of length $n$ to a shuffle square, which we denoted by $g(n)$. One natural try to get close to the conjectured value of $g(n)=\Omega(\sqrt{n})$ failed, as demonstrated in Proposition~\ref{close}, but this does not mean that Conjecture~\ref{odd} is false. In search for better constructions, one should perhaps allow varying differences between consecutive, still odd, terms.

A closely related is the following intriguing conjecture, mentioned in the introduction, posed by He, Huang, Nam, and Thaper in \cite{He2021}.

\begin{conjecture}
Almost all even binary words are shuffle squares.
\end{conjecture}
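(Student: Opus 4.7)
The plan is to leverage the nest-free ordered graph characterization (Proposition~\ref{Proposition Characterization}) together with a random-greedy existence argument. Let $W$ denote a uniformly random even binary word of length $2n$, decomposed into runs $W = U_1 U_2 \cdots U_m$. The goal is to show that with probability $1-o(1)$ there exists a nest-free ordered graph $G = G_0 \cup G_1$ satisfying conditions 1--3 of Proposition~\ref{Proposition Characterization}.

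First, I would establish typical run statistics for $W$. Conditioned on evenness, standard computations show that the number of runs $m$ is concentrated near $n$, the run lengths are approximately i.i.d. geometric with mean $2$ (up to the global constraint that $\mathtt{0}$'s and $\mathtt{1}$'s each total $n$), and $\max_i |U_i| = O(\log n)$ with probability $1 - o(1)$. This reduces the problem to words with a tame run-length profile.

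Next, I would try to construct $G_0$ and $G_1$ by independent left-to-right scans using a \emph{stack-based random greedy}. When the scan reaches a run $U_i$ of capacity $c$, the procedure draws edges back to the topmost runs on the color's stack until either the stack is exhausted or the current budget is used up, inserts up to one loop at $u_i$ to absorb a parity residue, and pushes any remaining odd unit back on the stack. Because edges always connect to the most recently pushed vertex, the resulting graph is automatically nest-free; hence the existence of a valid $G$ reduces to the event that both stacks are empty at the end of the scan.

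The main obstacle is the last step. Evenness guarantees that the total capacity of each color is balanced, but the parities of the $|U_i|$ induce a two-dimensional correlated random walk, and a naive local-limit estimate gives only $\Theta(1/\sqrt{n})$ probability of a joint return to the origin --- precisely the $\sqrt{n}$ gap between the He-Huang-Nam-Thaper bound $S_2(n) \geqslant \binom{2n}{n}$ and the target $2^{2n-1}$. Closing this gap appears to require enlarging the greedy's flexibility, for instance by allowing occasional deletions of loops, reorderings on the stack, or mild redistributions of mass between adjacent runs, each producing a super-polynomially larger family of admissible graphs per word. Any such relaxation must still preserve both the nest-free property and the degree-capacity matching, which is where the combinatorial core of the conjecture lies and where, in my view, a genuinely new idea is needed before the approach can go through.
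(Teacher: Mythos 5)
This statement is an open conjecture, not a theorem of the paper: it is attributed to He, Huang, Nam, and Thaper \cite{He2021}, and the paper restates it in the final section precisely because no proof is known. Your proposal, as you yourself concede in its last sentence, does not close the argument either, so it cannot be accepted as a proof. The concrete gap is the one you identify: a greedy left-to-right construction of $G_0\cup G_1$ succeeds only when two correlated lazy walks both return to the origin, an event of probability $\Theta(1/\sqrt{n})$, which merely reproduces the known bound $S_2(n)\geqslant\binom{2n}{n}\sim 2^{2n}/\sqrt{\pi n}$ and falls a factor $\Theta(\sqrt{n})$ short of the target $2^{2n-1}$. Listing possible relaxations (loop deletions, stack reorderings, mass redistribution) without verifying that any of them preserves nest-freeness and the degree--capacity matching, and without a second-moment or union-of-repairs estimate showing the failure probability actually drops to $o(1)$, is a statement of intent, not a proof.

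There is also a technical error in the one step you do claim: connecting each new run to the \emph{most recently} pushed vertex (LIFO discipline) does not produce a nest-free graph --- it produces exactly the opposite. If $a<b$ are pushed in that order and then matched at times $t_3<t_4$ by popping a stack, the resulting edges are $\{b,t_3\}$ and $\{a,t_4\}$ with $a<b<t_3<t_4$, which is precisely a nest in the sense of Section~\ref{gr}. Nest-free (equivalently, non-nesting) matchings correspond to a \emph{queue}: each closing element must be matched to the \emph{oldest} unmatched opening element, so that consecutive edges cross rather than nest (compare the structure of the graphs in Figures~\ref{fig9}--\ref{fig11}, where edges from a run always reach back past previously opened edges in first-in-first-out order). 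Swapping the stack for a queue repairs this local step but does not affect the $\sqrt{n}$ obstruction above, so the conjecture remains open.
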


This means that the proportion of shuffle squares among all even binary words of length $n$ tends to one with $n$ tending to infinity.  This conjecture implies that a~random binary word can be made a shuffle square by deleting at most \emph{two} letters. Indeed, for $n$ even, construct a bijection between \emph{even} and \emph{non-even} words by flipping the last letter; for $n$ odd, construct a bijection between
\emph{all} words of length $n$ and \emph{even} words of length $n+1$ by adding an appropriate letter at the end. (This was already observed in \cite{DGR} for $n$ even, and in \cite{He2021} for all $n$, but with distance at most three rather than just two.)

\subsection{Cutting distance to shuffle squares}

Let $c(W)$ be the minimum integer $c$ such that there exists a partition $W=B_1\cdots B_{c+1}$ into consecutive blocks and a permutation $\pi$ of $[c+1]$ for which $B_{\pi(1)}\cdots B_{\pi(c+1)}$ is a shuffle square. In other words, $c(W)$ is the minimum number of cuts applied to $W$ after which it is possible to reassemble the resulting blocks to form a shuffle square. In particular, $c(W)=1$ means that a shuffle square can be obtained from $W$ by a cyclic permutation of its~letters.



For instance, as discussed before (see Remark~\ref{cyclic} after Proposition \ref{abcd}), for every even binary word $W$ with at most four runs we have $c(W)\le1$. On the other hand,  as checked by a computer, the word $$P=\mathtt{111110110000111100010000}=\mathtt{1}^5\mathtt{0}\mathtt{1}^2\mathtt{0}^4\mathtt{1}^4\mathtt{0}^3\mathtt{1}\mathtt{0}^4$$
is not a shuffle square, and the same is true for any of its cyclic permutations. However, it can be cut into three pieces, $$\mathtt{111\;|\;1101100001111000\;|\;10000},$$ which may be rearranged to give a shuffle square: $$\mathtt{1101100001111000\;|\;111\;|\;10000}=\mathtt{1}^2\mathtt{0}\mathtt{1}^2\mathtt{0}^4\mathtt{1}^4\mathtt{0}^3\mathtt{1}^4\mathtt{0}^4.$$
Thus,  $c(P)=2$ for the word $P$ above. Is it true that two cuts are always sufficient for even binary words?

\begin{conjecture}\label{cW2}
	Every even binary word $W$ satisfies $c(W)\leqslant 2$.
\end{conjecture}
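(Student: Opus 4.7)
The plan is to proceed by induction on the number of runs $m$ of $W$, using the graphical characterization of Proposition~\ref{Proposition Characterization} as the main tool and the established classes of shuffle squares from this paper as the toolbox to which each case should be reduced.

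For the base cases $m \leq 5$, the stronger statement $c(W) \leq 1$ already holds. Indeed, when $m = 1, 2$, evenness forces all runs to have even length, so $W$ is dull; when $m = 3$, one cyclic rotation merges the first and third (same-letter) runs into a single even-length run, reducing to the 2-run dull case; when $m = 4$, the claim is precisely Remark~\ref{cyclic} following Proposition~\ref{abcd}; and when $m = 5$, a cyclic rotation merges the first and fifth runs into a 4-run word, whose own shuffle-square rotation (again by Remark~\ref{cyclic}) is still a rotation of $W$ itself since the composition of two cyclic rotations is a cyclic rotation.

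For $m \geq 6$ the plan is to use both cuts to produce three pieces $B_1, B_2, B_3$ and exploit the genuinely two-cut arrangements $B_1 B_3 B_2$, $B_2 B_1 B_3$, $B_3 B_2 B_1$ (the three cyclic arrangements are already available with a single cut). The key observation is that each new seam created by such a rearrangement can be made to glue runs of the same letter, provided the cuts are placed at suitably chosen run boundaries, so that the total run count drops by up to two upon reassembly. A case analysis based on the parities and relative sizes of $|U_1|, \ldots, |U_m|$ should then exhibit, for each $W$, a choice of cuts whose rearrangement lands in one of the known shuffle-square classes (Propositions~\ref{abcd}, \ref{abcde}, \ref{only1}, \ref{Ths1}, or Corollary~\ref{1and2}) or admits a direct nest-free graph construction via Proposition~\ref{Proposition Characterization}.

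The principal obstacle is that, unlike Remark~\ref{cyclic} in the 4-run case, no universal lemma is available guaranteeing that some 2-cut rearrangement of an arbitrary even word with at least $6$ runs is a shuffle square: one must verify the sufficient conditions in every adversarial pattern, and it may be necessary to allow cuts \emph{within} runs (not only at boundaries) to adjust parities locally. The hardest cases are expected to be words whose every 2-cut rearrangement reproduces an obstruction of the type identified by Lemma~\ref{Lemma Path-Cycle} in the proof of Theorem~\ref{abba}, or by Claim~\ref{cl}; ruling these out will most likely require a new structural lemma about how merges at the rearrangement seams interact with the parity pattern of the runs and the nest-free constraint, and this step I would expect to form the true technical heart of the proof.
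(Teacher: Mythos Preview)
This statement is an \emph{open conjecture} in the paper, not a theorem: the authors explicitly remark right after stating it that ``at present we do not even have a proof that $c(W)\le C$ for some constant $C$.'' So there is no proof in the paper to compare against, and any complete argument you supplied would be a genuine new result.

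Your proposal, however, is not a proof either. The base cases $m\le 5$ are fine (and your $m=5$ reduction to the $4$-run case via Remark~\ref{cyclic} is correct, since a rotation of a rotation is a rotation). But for $m\ge 6$ you offer only a strategy: cut at well-chosen positions, rearrange, and hope to land in one of the classes covered by Propositions~\ref{abcd}--\ref{Ths1}. Those propositions cover extremely special families (few runs, or all $\mathtt 1$-runs of length one, or all $\mathtt 0$-runs of length two), whereas a generic even binary word with six or more runs and arbitrary run lengths will not fall into any of them no matter how you cut and reassemble with only two cuts. You acknowledge this yourself when you say the hard cases ``will most likely require a new structural lemma''; that missing lemma is not a detail but the entire content of the conjecture. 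In short, your outline identifies where the difficulty lies but does not resolve it, which is consistent with the problem being open.
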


Let us remark that at present we do not even have a proof that $c(W)\le C$ for some constant $C$.
Also, Conjecture~\ref{cW2}  does not hold for ternary words, as one may easily check that $$c(\mathtt{122231113332})=3.$$
 We dare, however, to formulate the following generalization.

\begin{conjecture}
	For each $k\geqslant2$, every even $k$-ary word $W$ satisfies $c(W)\leqslant k$.
\end{conjecture}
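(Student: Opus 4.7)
The plan is to proceed by induction on $k$, with the base case $k=2$ being precisely Conjecture~\ref{cW2}. Assuming that binary conjecture, the inductive step aims to show that adding a new letter class to the alphabet costs at most one extra cut.

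Given an even $k$-ary word $W$ over alphabet $[k]$, the first step is to isolate one letter, say $\mathtt{k}$, which occurs $2m$ times for some $m\geqslant 0$. Place a single cut at the unique position $p$ such that the prefix $W_{[1,p]}$ contains exactly $m$ letters $\mathtt{k}$, splitting $W$ into pieces $A$ and $B$, each carrying half the $\mathtt{k}$'s. Deleting all copies of $\mathtt{k}$ from these pieces yields $(k-1)$-ary subwords $A^\ast, B^\ast$, whose concatenation $A^\ast B^\ast$ is still even. By the inductive hypothesis, at most $k-1$ additional cuts suffice to rearrange $A^\ast B^\ast$ into a shuffle square. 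The remaining task is to reinsert the $2m$ letters $\mathtt{k}$ without requiring further cuts, relying on the balanced placement guaranteed by the initial cut and on the freedom to permute the pieces.

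The main obstacles are twofold. First, the base case $k=2$ is itself unresolved, so the whole inductive scheme is conditional. Second, and more delicate, is the reinsertion step: after the inductive rearrangement of $A^\ast B^\ast$, the positions at which the $\mathtt{k}$'s must land need to be compatible simultaneously with the shuffle-square structure on the whole alphabet. The ordered nest-free graph criterion of Proposition~\ref{Proposition Characterization} should be the key bookkeeping device here, since nests arising within different letter classes decompose across letter subgraphs and can in principle be addressed independently. Crucially, the ternary example $c(\mathtt{122231113332})=3$ shows that the bound $c(W)\leqslant k$ is tight, so any successful argument must deploy the allotted budget of $k$ cuts with no slack whatsoever. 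Engineering this simultaneous resolution of potential nests across all $k$ letter subgraphs, and doing so with precisely $k$ cuts, is in my view the principal technical challenge; one plausible refinement would be to choose the isolated letter $\mathtt{k}$ adaptively (e.g., as the one with the fewest runs, or the one minimizing a suitable ``imbalance'' statistic) so that the reinsertion step becomes automatic, thereby passing the full budget on to the inductive call.
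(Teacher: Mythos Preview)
The statement you are attempting to prove is a \emph{conjecture} in the paper, not a theorem: the authors offer no proof, and in fact remark just before the binary case (Conjecture~\ref{cW2}) that they do not even know whether $c(W)\leqslant C$ holds for \emph{any} absolute constant $C$. So there is no proof in the paper to compare against.

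Your proposal is candid about its own status: you identify that the base case $k=2$ is open and that the reinsertion step is unjustified. Both are genuine gaps, and the second is fatal to the scheme as written. The key problem is that the shuffle-square property does \emph{not} decompose letter by letter. Concretely: suppose you cut $W$ once to obtain $A,B$ with the $\mathtt{k}$'s split evenly, delete the $\mathtt{k}$'s, and then use $k-1$ further cuts on $A^\ast B^\ast$ to permute its pieces into a shuffle square $S^\ast$. Each such cut on $A^\ast B^\ast$ lifts to a well-defined cut on $AB$, so with $k$ cuts in total you can produce a rearrangement $S$ of $W$ whose $\mathtt{k}$-deletion equals $S^\ast$. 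But there is no reason whatsoever for $S$ to be a shuffle square. Perfect twins in $S^\ast$ give a partition of the non-$\mathtt{k}$ positions of $S$ into two identical subwords; to extend this to perfect twins in $S$ you must also split the $\mathtt{k}$'s compatibly, and that imposes conditions on the \emph{interleaving} of the $\mathtt{k}$-positions with the twin structure on the remaining letters. The initial ``balanced'' cut only guarantees that each of $A$ and $B$ carries $m$ copies of $\mathtt{k}$; after the pieces are permuted, those copies can sit in positions that force nests (in the sense of Proposition~\ref{Proposition Characterization}) against the $\mathtt{k}$-free part of the graph. Nothing in your outline controls this.

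Your suggested refinement---choosing the isolated letter adaptively---does not address the difficulty, because the obstruction is structural rather than quantitative: even a single $\mathtt{k}$-run, badly placed after permutation, can destroy the shuffle-square property. In short, what you have is a plausible heuristic for why the bound might grow linearly in $k$, but not an argument; the inductive step would require a substantially new idea linking the cut structure for the $(k-1)$-ary reduct to the placement of the deleted letter.
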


\subsection{Reverse shuffle squares}
D. Henshall, N. Rampersad, J. Shallit in \cite{Hen2012} defined also the notion of a \emph{reverse shuffle square}, that is, a word which can be split into two subwords which are \emph{reverses} of each other (cf. \cite{He2021}). For instance,
$$\underline{\textcolor{red}{\mathtt{10}}}\overline{\textcolor{blue}{\mathtt{011}}}\underline{\textcolor{red}{\mathtt{01}}}
\overline{\textcolor{blue}{\mathtt0}}
\underline{\textcolor{red}{\mathtt1}}\overline{\textcolor{blue}{\mathtt{01}}}
\underline{\textcolor{red}{\mathtt0}}$$
is a reverse shuffle square, consisting of two copies of $\mathtt{100110}$, one straight and one reverse.

We believe that a suitable modification of Proposition~\ref{Proposition Characterization} will provide a strong tool to study the existence of and the deletion distance from reverse shuffle squares. Specifically, for binary
words it seems that condition 3 (nestlessness) should be replaced by
forbidding alignments, that is, pairs of edges $e,f$ with $\max e<\min f$.

Finally, in the context of Conjecture~\ref{cW2}, one can  easily deduce from the famous Necklace Splitting Theorem (see \cite{Alon}, \cite{Alon-West}, \cite{Gry2023}) that every even binary word is at cutting distance at most two from a \emph{reverse shuffle square}.

\section*{Acknowledgment} We would like to thank Andrzej Komisarski for sharing with us his lovely ABBA problem, as well as some results of his impressive computer experiments.

\end{document}